\documentclass[reqno]{amsart}
\usepackage{amssymb,amsmath,enumerate,mathrsfs}

\numberwithin{equation}{section}

\newtheorem{theorem}[equation]{Theorem}
\newtheorem{proposition}[equation]{Proposition}
\newtheorem{lemma}[equation]{Lemma}

\theoremstyle{definition}
\newtheorem{definition}[equation]{Definition}


\def\C{\mathbb C}
\def\N{\mathbb N}
\def\R{\mathbb R}
\def\L{\mathscr L}
\def\S{\mathscr S}
\def\opm{\textup{op}_{\textit{M}}}

\DeclareMathOperator{\ran}{ran}
\DeclareMathOperator{\sym}{ \sigma\!\!\!\sigma}


\begin{document}
\title[Shubin calculus on asymptotically conic manifolds]{A Shubin pseudodifferential calculus on asymptotically conic manifolds}

\author{Thomas Krainer}
\address{Penn State Altoona\\ 3000 Ivyside Park \\ Altoona, PA 16601-3760}
\email{krainer@psu.edu}
\thanks{The author gratefully acknowledges the financial support and hospitality of the \emph{Max Planck Institute for Mathematics} in Bonn during May 2024 where much of this work was completed.}

\begin{abstract}
We present a global pseudodifferential calculus on asymptotically conic manifolds that generalizes (anisotropic versions of) Shubin's classical global pseudodifferential calculus on Euclidean space to this class of noncompact manifolds. Fully elliptic operators are shown to be Fredholm in an associated scale of Sobolev spaces, and to have parametrices in the calculus.
\end{abstract}

\subjclass[2020]{Primary: 58J40; Secondary: 58J05, 47G30}
\keywords{Anharmonic oscillator; asymptotically conic manifolds; global pseudodifferential calculus}

\maketitle


\section{Introduction}

\noindent
In \cite{ShubinCalc} Shubin introduced a global pseudodifferential calculus on Euclidean space $\R^d$ that is modeled on the quantum harmonic oscillator
$$
\Delta + |y|^2 : \S(\R^d) \to \S(\R^d),
$$
where $\Delta = \sum_{j=1}^d D_{y_j}^2$ is the positive Laplacian in $\R^d$. Its symbol is $p(y,\eta) = |y|^2 + |\eta|^2$, and the main idea in Shubin's calculus is to remove the distinction between variables and covariables on the level of symbols and consider symbol estimates of the form
\begin{equation}\label{globsymb}
|\partial_{(y,\eta)}^{\alpha}p(y,\eta)| \lesssim (1 + |y| + |\eta|)^{\mu-|\alpha|},
\end{equation}
where for the harmonic oscillator $\mu = 2$. Smooth functions $p_{(\mu)}(y,\eta) \in C^{\infty}(\R^{2d}\setminus \{(0,0)\})$ that are jointly homogeneous in $(y,\eta)$ in the sense that
$$
p(\varrho y,\varrho \eta) = \varrho^{\mu}p(y,\eta), \; \varrho > 0,\, (y,\eta) \neq (0,0),
$$
give rise to symbols of order $\mu \in \R$ after multiplication by an excision function $\chi \in C^{\infty}(\R^{2d})$ of the origin, and classical symbols are those with an asymptotic expansion
$$
p(y,\eta) \sim \sum\limits_{j=0}^{\infty}\chi(y,\eta) p_{(\mu-j)}(y,\eta),
$$
where $p_{(\mu-j)}$ is jointly homogeneous of degree $\mu-j$. The classical symbol $p(y,\eta)$ is fully elliptic if $p_{(\mu)}(y,\eta)$ is invertible for $(y,\eta) \in \R^{2d}\setminus\{(0,0)\}$.

The Shubin calculus has many desirable properties of a pseudodifferential calculus \cite{NicolaRodino,ShubinBook}: It is closed with respect to taking compositions and formal adjoints; it comes with a scale of Sobolev spaces, and fully elliptic operators are Fredholm in that Sobolev space scale; finally, fully elliptic operators have parametrices in the calculus modulo regularizing remainders that are integral operators with kernels in $\S(\R^d\times\R^d)$.
On the other hand, the Shubin calculus does not have good perturbation and invariance properties. For example, from the symbol estimates \eqref{globsymb} it is easy to see that the only differential operators in the Shubin calculus are those with polynomial coefficients, which reveals that the calculus is not well-behaved with respect to changes of coordinates.

In this paper we present a pseudodifferential calculus that generalizes and extends the Shubin calculus from $\R^d$ to asymptotically conic manifolds. When specialized to $\R^d$ our calculus is broader than the Shubin calculus, which is necessary to overcome the restrictions with respect to coordinate changes; in particular, our calculus contains many more differential operators on $\R^d$ than Shubin's calculus, and it enjoys improved perturbation properties. Our approach is quite different from the quadratic scattering calculus \cite{Wunsch} which is also modeled on generalizations of the harmonic oscillator to such manifolds.

More precisely, we define here a family of pseudodifferential calculi on asymptotically conic manifolds whose members are distinguished by the anisotropic treatment of covariables and coefficient growth at infinity. Fully elliptic model operators on $\R^d$ for this family are generalized anharmonic oscillators of the form
$$
\Delta^m + |y|^{2n} : \S(\R^d) \to \S(\R^d)
$$
for $m,n \in \N$ as considered in the pseudodifferential calculus in \cite{ChatzakouDelgadoRuzhansky,ChatzakouDelgadoRuzhansky2}, see also \cite{NicolaRodino}.
To define our pseudodifferential calculus we use Mellin pseudodifferential operators with respect to the radial variable near infinity. In order to deal with tempered distributions, the symbols of these Mellin operators are required to be holomorphic in the radial covariable. Mellin pseudodifferential operators with holomorphic symbols are an important ingredient in Schulze's pseudodifferential operator theory on singular manifolds \cite{SchuNH}, and they are utilized in that theory for quantizing symbols near the singularities rather than at infinity as in our situation. We are nonetheless able to take advantage in this paper of some techniques that were originally developed for Schulze's theory \cite{GilSchulzeSeiler0,GilSchulzeSeiler,SchuNH}. We also previously developed a Mellin pseudodifferential calculus for the functional analysis of ordinary differential operators of Fuchs type with unbounded operator coefficients in \cite{KrainerIndicial} that relates to the present work as well.

In this paper we focus on the construction of the calculus and the investigation of ellipticity, parametrices, and the Fredholm property. In forthcoming work we plan to analyze resolvent families, trace asymptotics, and $\zeta$-functions for fully elliptic operators in our calculus with the goal to extend results about the spectral theory of such operators \cite{HelfferAst,NicolaRodino,Parmeggiani} from $\R^d$ to asymptotically conic manifolds.


\section{Asymptotically conic manifolds and operators}\label{ManEnds}

\noindent
Let $M_0$ be a smooth compact manifold of dimension $d$ with boundary $Z$, and let $U$ be a collar neighborhood of $Z$ in $M_0$ so that $U \cong (\tfrac{1}{2},1]_x \times Z$ with the boundary $Z$ at $x=1$. We then get a noncompact $C^{\infty}$-manifold $M$ of dimension $d$ by gluing $M_0$ to $[1,\infty)\times Z$ along the common boundary $Z$:
$$
M = M_0 \sqcup_Z ([1,\infty) \times Z).
$$
Pick a $C^{\infty}$-function $x: M \to \R$ such that $0 < x \leq \frac{1}{2}$ on $M_0 \setminus U$, and such that $x$ coincides with the projection to the first factor in $U \cong (\tfrac{1}{2},1] \times Z$, and on the end $[1,\infty)\times Z$. A (model) asymptotically conic metric on $M$ is a Riemannian metric $g$ that for large $x \gg 1$ on the end can be written in the form
\begin{equation}\label{ModelMetric}
g = dx^2 + x^2g_Z
\end{equation}
for some Riemannian metric $g_Z$ on $Z$. If $(Z,g_Z) = ({\mathbb S}^{d-1},g_{{\mathbb S}^{d-1}})$ is the unit sphere in $\R^d$ with its standard metric, then the metric $g$ for large $x \gg 1$ corresponds to the Euclidean metric in $\R^d$ written in polar coordinates (the manifold $M$ is then asymptotically Euclidean). The positive Laplace-Beltrami operator on $M$ with respect to $g$ for large $x \gg 1$ is given by
$$
\Delta_g = D_x^2 - i\frac{d-1}{x}D_x + \frac{1}{x^2}\Delta_{g_Z} = x^{-2}\bigl[(xD_x)^2 - i(d-2)(xD_x) + \Delta_{g_Z} \bigr],
$$
and for the $L^2$-space with respect to $g$ on $M$ we find that
$$
L^2([R,\infty)\times Z,g) = x^{-\frac{d}{2}}L^2([R,\infty)\times Z,\tfrac{dx}{x}dz) = x^{-\frac{d}{2}}L^2_b([R,\infty)\times Z)
$$
for $R \gg 1$ large enough, where $dz$ is the Riemannian density with respect to the metric $g_Z$. As the methods in our paper are based on the Mellin transform on the asymptotically conic end with respect to the variable $x$, and the space $L^2_b$ based on Haar measure on $\R_+$ is the preferred space for working with this transform, we adopt the point of view that the geometric $L^2$-space is a weighted $L^2_b$ space, thus declaring $L^2_b$ to be the base space for all considerations in this paper.

We next give some examples for the type of differential operators we aim to analyze on $M$ and discuss some features of our approach:

\subsubsection*{Harmonic oscillator}

Let $V \in C^{\infty}(M)$ be a potential such that $V = x^2$ for large $x \gg 1$. The resulting operator $\Delta_g + V$ is an analogue of the harmonic oscillator on $M$. For large $x$ we find
\begin{equation}\label{HarmonicOscillator}
\Delta_g + V = x^{-2}A = x^{-2}\bigl[(xD_x)^2 - i(d-2)(xD_x) + \Delta_{g_Z} + x^4 \bigr].
\end{equation}
Our goal is to prove Fredholmness results for such operators on $M$ in appropriate scales of weighted Sobolev spaces. We are going to construct a pseudodifferential calculus on $M$ in which such operators are fully elliptic, and the calculus contains the (Fredholm) inverses and parametrices.

To illustrate the associated notion of ellipticity in the calculus consider the operator $A$ from \eqref{HarmonicOscillator}. It has the usual homogeneous principal symbol given by
$$
\sym_{\psi}(A) = (x \xi)^2 + |\zeta|_{g_Z}^2
$$
for large $x \gg 1$, which is invertible for nonzero covectors. We are rescaling $\sigma = x\xi$, which is familiar from the $b$-calculus (but here for large values of $x$) and is based on the Mellin correspondence $xD_x \cong \sigma$, and consider instead
$$
{}^b\sym_{\psi}(A) = \sigma^2 + |\zeta|_{g_Z}^2.
$$
The $b$-principal symbol is oblivious to the potential $V$, which enters $A$ as $x^4$ for large $x$. The point now is to include that term as an extra parameter in the principal symbol, which gives the \emph{extended principal symbol}
$$
\sym_e(A) = \sigma^2 + |\zeta|_{g_Z}^2 + \tau^4,
$$
where $\tau \geq 0$ is an extra covariable that is the receptacle for $x$. The extended principal symbol has mixed homogeneities which are dealt with anisotropically. Note that
$$
\sym_e(A)(\varrho^2\zeta,\varrho^2\sigma,\varrho\tau) = \varrho^4\sym_e(A)(\zeta,\sigma,\tau), \quad \varrho > 0,
$$
so $\sym_e(A)$ is anisotropic homogeneous of order $4$ with respect to the aniso\-tropy vector $\vec{\ell} = (2,2,1) \in \N^3$. The extended principal symbol is thus completely determined by restriction to an anisotropic hemisphere
$$
|\sigma|^2 + |\zeta|_{g_Z}^2 + \tau^4 = 1, \quad \tau \geq 0,
$$
and when further restricting to $\tau = 0$ (the ``equator'' of the hemisphere) we get
$$
\sym_e(A)\big|_{\tau = 0} = {}^b\sym_{\psi}(A)
$$
on the cosphere $|\sigma|^2 + |\zeta|_{g_Z}^2 = 1$. Invertibility of $\sym_e(A)$ for large $x \gg 1$ together with ordinary ellipticity on $M$ constitutes the notion of full ellipticity in our calculus. Clearly $A$ is fully elliptic, and we then find that
\begin{equation}\label{DeltaV}
\Delta_g + V : x^{\alpha}{\mathscr H}^{s;\vec{\ell}}(M) \to x^{\alpha-2}{\mathscr H}^{s-4;\vec{\ell}}(M)
\end{equation}
is Fredholm for all $\alpha,s \in \R$, with kernel and cokernel contained in ${\mathscr S}(M)$ independent of these values. The Sobolev spaces ${\mathscr H}^{s;\vec{\ell}}(M)$ are associated to the pseudodifferential calculus. There is a version of the calculus for any anisotropy vector of the form\footnote{The calculus on the model space $\R_+\times Z$ for the noncompact end allows for more general decoupled anisotropies $\vec{\ell} = (\ell_1,\ell_2,\ell_3) \in \N^3$ that assign different weights to the factors $\R_+$ and $Z$, as well as to the growth in $x$. Such operators are needed elsewhere and are therefore included here, but they are not relevant for the calculus on $M$.} $\vec{\ell} = (\ell_1,\ell_1,\ell_3) \in \N^3$, where in this case $\vec{\ell} = (2,2,1)$. As the first index appears twice, we generally shorten notation to ${\mathscr H}^{s;(\ell_1,\ell_3)}(M)$. The regularity parameter $s \in \R$ controls both smoothness and growth/decay at infinity in the Sobolev space scale, while the weight $\alpha \in \R$ allows for additional fine-tuning of growth. It is always true that $x^{\alpha}{\mathscr H}^{s;\vec{\ell}}(M) \hookrightarrow x^{\alpha'}{\mathscr H}^{s';\vec{\ell}}(M)$ for $s \geq s'$ and $\alpha \leq \alpha'$, and we have
$$
\S(M) = \bigcap\limits_{s \in \R}{\mathscr H}^{s;\vec{\ell}}(M), \quad \S'(M) = \bigcup\limits_{s \in \R}{\mathscr H}^{s;\vec{\ell}}(M).
$$
Moreover,
$$
H^{\frac{s}{\ell_1}}_{\textup{comp}}(M) \subset {\mathscr H}^{s;(\ell_1,\ell_3)}(M)  \subset H^{\frac{s}{\ell_1}}_{\textup{loc}}(M) 
$$
for all $s \in \R$. In the case of the harmonic oscillator we have $\ell_1=2$ and $\ell_3=1$, so smoothness in \eqref{DeltaV} is only shifted by $2$ as expected.

\subsubsection*{Anharmonic oscillator}

Let $V \in C^{\infty}(M)$ be such that $V = x^{2n}$ for large $x \gg 1$, and consider a generalized anharmonic oscillator of the form $(\Delta_g)^m + V$, see \cite{ChatzakouDelgadoRuzhansky} in case $M = \R^d$. Here $m,n \in \N$ are arbitrary. Rewriting $\Delta_g$ in terms of $xD_x$ for large $x$ as above shows that
\begin{align*}
(\Delta_g)^m + V &= x^{-2m}A = x^{-2m}\bigl[\bigl((xD_x)^{2} + \Delta_{g_Z}\bigr)^m + \textup{l.o.t}\bigr] + V \\
&= x^{-2m}\bigl[\bigl((xD_x)^{2} + \Delta_{g_Z}\bigr)^m + x^{2(n+m)} + \textup{l.o.t}\bigr].
\end{align*}
In this case
$$
{}^b\sym_{\psi}(A) = \bigl(\sigma^{2} + |\zeta|_{g_Z}^2\bigr)^m
$$
and
$$
\sym_e(A) = \bigl(\sigma^{2} + |\zeta|_{g_Z}^{2}\bigr)^m + \tau^{2(m+n)},
$$
where again $\tau \geq 0$. Let $\vec{\ell} = (\ell_1,\ell_1,\ell_3) = (m+n,m+n,m) \in \N^3$. Then
$$
\sym_e(A)(\varrho^{\ell_1}\zeta,\varrho^{\ell_1}\sigma,\varrho^{\ell_3}\tau) = \varrho^{2m(m+n)}\sym_e(A)(\zeta,\sigma,\tau), \quad \varrho > 0,
$$
which is of order $2m(m+n)$ with respect to our choice of $\vec{\ell}$. When restricted to $\tau=0$ we again recover ${}^b\sym_{\psi}(A)$, but with a different order convention:
$$
{}^b\sym_{\psi}(A)(\varrho^{\ell_1}\zeta,\varrho^{\ell_1}\sigma) = \varrho^{2m(m+n)}\,{}^b\sym_{\psi}(A)(\zeta,\sigma), \quad \varrho > 0.
$$
While $A$ is of order $2m$ with respect to the standard notion of order, we adopt the point of view that relative to the anisotropy vector $\vec{\ell} =  (m+n,m+n,m)$ the order of $A$ is $2m(m+n)$. As $A$ is elliptic in the usual sense over the compact part of $M$, and $\sym_e(A)$ is invertible for large $x \gg 1$ (for nonzero $(\sigma,\zeta,\tau)$), we find that
$$
(\Delta_g)^m + V : x^{\alpha}{\mathscr H}^{s;(m+n,m)}(M) \to x^{\alpha-2m}{\mathscr H}^{s-2m(m+n);(m+n,m)}(M) 
$$
is Fredholm for all $s,\alpha \in \R$, with kernel and cokernel contained in ${\mathscr S}(M)$. Adding a spectral parameter does not change the result, which confirms as expected that the essential spectrum of $(\Delta_g)^m + V$ is empty, and that eigenfunctions decay rapidly at infinity.

While it would be natural to first simplify the anisotropy vector $\vec{\ell} = (\ell_1,\ell_1,\ell_3) \in \N^3$ by removing any common factors so that  $\gcd(\ell_1,\ell_3) = 1$, we have chosen to allow arbitrary (not necessarily coprime) anisotropy vectors for our analysis.

\subsubsection*{Differential operators in the calculus}

Fix any anisotropy vector $\vec{\ell} = (\ell_1,\ell_1,\ell_3) \in \N^3$. A differential operator in the calculus of order $\mu \in \ell_1\N_0$ is a differential operator $A$ on $M$ of ``usual'' order $\frac{\mu}{\ell_1}$, and for large $x \gg 1$ on the noncompact end, and in coordinates on $Z$, we can write $A$ in the form
$$
A = \sum\limits_{\ell_1|\alpha| + \ell_1 j + \ell_3 k \leq \mu} a_{\alpha,j,k}(x,z)x^kD_z^{\alpha}(xD_x)^j,
$$
where the coefficients $a_{\alpha,j,k}(x,z)$ are $C^{\infty}$, and, for every $i$ and $\beta$, the derivatives $(xD_x)^iD_z^{\beta}a_{\alpha,j,k}(x,z)$ are bounded as $z$ varies in compact subsets of the chart and $x \geq R \gg 1$.

Full ellipticity of $A$ first requires $A$ to be elliptic on $M$ in the usual sense that the homogeneous principal symbol is invertible. We observe that the homogenous principal symbol satisfies
$$
\sym_{\psi}(A)(y,\varrho^{\ell_1}\eta) = \varrho^{\mu}\sym_{\psi}(A), \quad \varrho > 0,
$$
for all $(y,\eta) \in T^*M\setminus 0$, and we consider $A$ to be of anisotropic order $\mu$ with respect to $\vec{\ell}$. For large $x \gg 1$ with $A$ in coordinates written as above we find that
$$
\sym_{\psi}(A)(z,x,\zeta,\xi) = \sum\limits_{\ell_1|\alpha| + \ell_1 j = \mu} a_{\alpha,j,0}(x,z)\zeta^{\alpha}(x\xi)^j,
$$
and after rescaling
$$
{}^b\sym_{\psi}(A)(z,x,\zeta,\sigma) = \sum\limits_{\ell_1|\alpha| + \ell_1 j = \mu} a_{\alpha,j,0}(x,z)\zeta^{\alpha}\sigma^j.
$$
The extended principal symbol is
$$
\sym_e(A)(z,x,\zeta,\sigma,\tau) = \sum\limits_{\ell_1|\alpha| + \ell_1 j + \ell_3 k = \mu} a_{\alpha,j,k}(x,z)\zeta^{\alpha}\tau^k\sigma^j
$$
for $\tau \geq 0$, which is anisotropic homogeneous of order $\mu$ with respect to $\vec{\ell}$. The operator $A$ is fully elliptic if $\sym_e(A)(z,x,\zeta,\sigma,\tau)$ is invertible for all $(z,x)$ with $x \gg 1$ large enough, and all $(\zeta,\sigma,\tau) \neq 0$, and the inverse is bounded when restricted to the anisotropic hemisphere $|\zeta|^{2\ell_3} + |\sigma|^{2\ell_3} + \tau^{2\ell_1} = 1$, $\tau \geq 0$, as $z$ varies over compact sets and $x \geq R \gg 1$. Since we did not compactify the end $x \to \infty$ we need such a boundedness condition on the inverse of $\sym_e(A)$ as $x \to \infty$. In the examples of the harmonic and anharmonic oscillator, the coefficients $a_{\alpha,j,k}$ do not depend on $x$ for large $x$, so this extra boundedness assumption is automatically fulfilled.

If $A$ is fully elliptic it is Fredholm in the associated Sobolev spaces
$$
A : x^{\alpha}{\mathscr H}^{s;\vec{\ell}}(M) \to x^{\alpha}{\mathscr H}^{s-\mu;\vec{\ell}}(M)
$$
for all $s,\alpha \in \R$, with kernel and cokernel contained in $\S(M)$. The presence of extra powers of $x$ as in the harmonic and anharmonic oscillator leads to an additional shift in the weight parameter $\alpha$.

\subsection{{\boldmath Step-$\frac{1}{\ell_1}$} polyhomogeneous pseudodifferential operators}\label{step1lcalc}

Our pseudodifferential calculus associated with $\vec{\ell} = (\ell_1,\ell_1,\ell_3) \in \N^3$ is a refinement of the ordinary step-$\frac{1}{\ell_1}$ polyhomogeneous operator calculus on $M$. The purpose of this section is to set notation, and primarily to explain our point of view regarding the order of operators with respect to $\vec{\ell}$. The manifold $M$ can be any (open) smooth manifold with a positive density here.

By $L^{\mu;\ell_1}(M)$ we denote all operators $A : C_c^{\infty}(M) \to C^{\infty}(M)$ with the following properties:
\begin{itemize}
\item For any $\phi,\psi \in C_c^{\infty}(M)$ with $\textup{supp}(\phi)\cap\textup{supp}(\psi) = \emptyset$, the operator $\phi A \psi$ is an integral operator on $M$ with a $C^{\infty}$-kernel function.
\item Let $\kappa : U \to \Omega$, with $\Omega \subset \R^d$ open, be any local chart, and $\phi,\psi \in C_c^{\infty}(U)$. Then the push-forward $\phi A \psi$ to $\Omega$ is given by
$$
\kappa_*[\phi A \psi] u(y) = (2\pi)^{-d} \int_{\R^d} e^{iy\eta}a(y,\eta)\hat{u}(\eta)\,d\eta,  \quad u \in C_c^{\infty}(\Omega),
$$
where the local symbol $a(y,\eta)$ satisfies the estimates
$$
|D^{\alpha}_y\partial^{\beta}_{\eta}a(y,\eta)| \lesssim \big[(1 + |\eta|^{2})^{\frac{1}{2\ell_1}}\bigr]^{\mu - \ell_1|\beta|}
$$
locally uniformly with respect to $y \in \Omega$.
\end{itemize}
We call $A$ classical and write $A \in L_{\textup{cl}}^{\mu;\ell_1}(M)$ if the local symbols $a(y,\eta)$ have an asymptotic expansion
$$
a(y,\eta) \sim \sum\limits_{j=0}^{\infty}\chi(\eta)a_{(\mu-j)}(y,\eta),
$$
where $\chi \in C^{\infty}(\R^d)$ satisfies $\chi \equiv 0$ near $\eta = 0$ and $\chi \equiv 1$ for large $|\eta|$, and the $a_{(\mu-j)}(y,\eta)$ are anisotropic homogeneous of degree $\mu-j$ with respect to $\ell_1$ in the sense that
$$
a_{(\mu-j)}(y,\varrho^{\ell_1}\eta) = \varrho^{\mu-j}a_{(\mu-j)}(y,\eta), \quad \varrho > 0.
$$
The homogeneous principal symbol $\sym_{\psi}(A)$ of $A$ is defined on $T^*M\setminus 0$ and is (anisotropic) homogeneous of order $\mu$ with respect to $\ell_1$ in the fibers.

Clearly $L^{\mu;\ell_1}(M) = L^{\frac{\mu}{\ell_1}}(M)$, and $L^{\mu;\ell_1}_{\textup{cl}}(M)$ are the step-$\frac{1}{\ell_1}$ polyhomogeneous pseudodifferential operators of usual order $\frac{\mu}{\ell_1}$ on $M$.


\section{The calculus on $\R_+\times Z$}\label{PseudoCalculusExit}

\noindent
We are going to need various types of operators and operator families that depend anisotropically on covariables and parameters. To set notation, let $\R^{N} = \R^{n_1} \times \ldots \times \R^{n_q}$ with $N = n_1 + \ldots +n_q$, and let $\vec{\ell} = (\ell_1,\ldots,\ell_q) \in \N^q$ be arbitrary. Let
\begin{align*}
|y|_{\vec{\ell}} &= \biggl(\sum\limits_{j=1}^q |y_j|^{2\prod_{k \neq j}\ell_k}\biggr)^{\frac{1}{2\ell_1\cdots\ell_q}}, \\
\langle y \rangle_{\vec{\ell}} &= \biggl(1 + \sum\limits_{j=1}^q |y_j|^{2\prod_{k \neq j}\ell_k}\biggr)^{\frac{1}{2\ell_1\cdots\ell_q}}
\end{align*}
for $y = (y_1,\ldots,y_q) \in \R^N$, where $|y_j|$ is the Euclidean norm of $y_j \in \R^{n_j}$ for each $j$. The function $|\cdot|_{\vec{\ell}}$ is $C^{\infty}$ on $\R^N\setminus \{0\}$ and positive, and is anisotropic homogeneous of degree $1$, i.e.,
$$
|(\varrho^{\ell_1}y_1,\ldots,\varrho^{\ell_q}y_q)|_{\vec{\ell}} = \varrho \, |(y_1,\ldots,y_q)|_{\vec{\ell}}, \quad \varrho > 0.
$$
The anisotropic $\vec{\ell}$-spheres $|y|_{\vec{\ell}} = R$ for any $R > 0$ are compact $C^{\infty}$ hypersurfaces of $\R^N$. The anisotropic Japanese bracket $\langle \cdot \rangle_{\vec{\ell}}$ smoothes the singularity at the origin of $|\cdot|_{\vec{\ell}}$. We have Peetre's inequality
$$
\langle y + y' \rangle_{\vec{\ell}}^s \leq 2^{|s|}\langle y \rangle_{\vec{\ell}}^s\,\langle y'\rangle_{\vec{\ell}}^{|s|},
$$
and there exist constants $c,C > 0$ such that
$$
c\langle y\rangle^{\frac{1}{\ell_1+\ldots+\ell_q}} \leq \langle y \rangle_{\vec{\ell}} \leq C \langle y \rangle^{\ell_1+\ldots+\ell_q},
$$
where the standard Japanese bracket $\langle \cdot \rangle$ corresponds to $\vec{\ell} = (1,\ldots,1)$.

\medskip

\noindent
For the remainder of this section, we fix a positive density $dz$ on $Z$. The basic Hilbert space on $\R_+\times Z$ is
$$
L^2_b(\R_+\times Z) = L^2(\R_+\times Z,\tfrac{dx}{x}dz).
$$
For notational brevity we will write out the calculus for operators acting between scalar function spaces, but it equally works for operators acting between sections of vector bundles that are pull-backs to $\R_+\times Z$ of Hermitian vector bundles on $Z$.
 
\subsection{Operator families on {\boldmath $Z$}}\label{sec-ZFamilies}
 
$Z$ is a closed, compact manifold of dimension $\dim Z = f$, where $f=d-1$. Fix $\vec{\ell} = (\ell_1,\ell_2,\ell_3) \in \N^3$. For $\mu \in \R$ let $L^{\mu;\vec{\ell}}(Z;\R\times\overline{\R}_{+})$ denote the parameter-dependent pseudodifferential operator families
$$
A(\sigma,\tau) : C^{\infty}(Z) \to C^{\infty}(Z), \quad (\sigma,\tau) \in \R\times\overline{\R}_+,
$$
of the following kind:
\begin{itemize}
\item For any $\phi,\psi \in C^{\infty}(Z)$ with $\textup{supp}(\phi)\cap\textup{supp}(\psi) = \emptyset$ we have
$$
[\phi A(\sigma,\tau)\psi]u(z) = \int_{Z}k(\sigma,\tau;z,z')u(z')\,dz', \quad u \in C^{\infty}(Z),
$$
with integral kernel
\begin{equation}\label{Zkernels}
k(\sigma,\tau;z,z') \in \S(\R\times\overline{\R}_{+};C^{\infty}(Z\times Z)),
\end{equation}
where $dz'$ is the previously fixed smooth positive density on $Z$.
\item For $\phi,\psi \in C^{\infty}(Z)$ with compact supports in the same local chart $\kappa : U \to \Omega$, the push-forward of $\phi A(\sigma,\tau) \psi$ to $\Omega \subset \R^f$ is given by
$$
\kappa_*[\phi A(\sigma,\tau) \psi] u(z) = (2\pi)^{-f} \int_{\R^f} e^{iz\zeta}a(z,\zeta;\sigma,\tau)\hat{u}(\zeta)\,d\zeta,  \quad u \in C_c^{\infty}(\Omega),
$$
where the symbol $a(z,\zeta;\sigma,\tau) \in C^{\infty}(\Omega\times\R^f\times\R\times\overline{\R}_{+})$ satisfies the estimates
\begin{equation}\label{symbolestimate}
|D^{\alpha}_z\partial^{\beta}_{\zeta}\partial^{\gamma}_{\sigma}\partial^{\delta}_{\tau}a(z,\zeta;\sigma,\tau)| \lesssim \langle (\zeta,\sigma,\tau) \rangle_{\vec{\ell}}^{\mu-\ell_1|\beta|-\ell_2|\gamma|-\ell_3|\delta|}
\end{equation}
locally uniformly with respect to $z \in \Omega$.
\end{itemize}
The regularizing families
$$
L^{-\infty}(Z;\R\times\overline{\R}_{+}) = \bigcap_{\mu \in \R} L^{\mu;\vec{\ell}}(Z;\R\times\overline{\R}_{+})
$$
are independent of the anisotropy vector $\vec{\ell}$ and consist of operator families with integral kernels \eqref{Zkernels}.

We write $L_{\textup{cl}}^{\mu;\vec{\ell}}(Z;\R\times\overline{\R}_{+})$ for the subclass of classical operator families, i.e., the local symbols $a(z,\zeta;\sigma,\tau)$ are classical symbols in the sense that
$$
a(z,\zeta;\sigma,\tau) \sim \sum\limits_{j=0}^{\infty}\chi(\zeta;\sigma,\tau)a_{(\mu-j)}(z,\zeta;\sigma,\tau),
$$
where $\chi \in C^{\infty}(\R^f\times\R\times\overline{\R}_{+})$ is an excision function of the origin, and the $a_{(\mu-j)} \in C^{\infty}(\Omega\times[\bigl(\R^f
\times\R\times\overline{\R}_{+}\bigr)\setminus 0])$ are anisotropic homogeneous of degree $\mu-j$:
$$
a_{(\mu-j)}(z,\varrho^{\ell_1}\zeta;\varrho^{\ell_2}\sigma,\varrho^{\ell_3}\tau) = \varrho^{\mu-j}a_{(k)}(z,\zeta;\sigma,\tau), \quad \varrho > 0.
$$
Every $A(\sigma,\tau) \in L_{\textup{cl}}^{\mu;\vec{\ell}}(Z;\R\times\overline{\R}_{+})$ has an invariantly defined parameter-dependent homogeneous principal symbol
$\sym(A)(z,\zeta;\sigma,\tau)$ on $(T^*Z\times\R\times\overline{\R}_{+})\setminus 0$ that satisfies
$$
\sym(A)(z,\varrho^{\ell_1}\zeta,\varrho^{\ell_2}\sigma,\varrho^{\ell_3}\tau) = \varrho^{\mu}\sym(A)(z,\zeta;\sigma,\tau), \quad \varrho > 0.
$$
We equip both $L^{\mu;\vec{\ell}}(Z;\R\times\overline{\R}_{+})$ and $L_{\textup{cl}}^{\mu;\vec{\ell}}(Z;\R\times\overline{\R}_{+})$ with their canonical Fr{\'e}chet topologies. We reiterate that for the global calculus on the noncompact manifold only $\ell_1=\ell_2$ is relevant.

\subsubsection*{Properties of the calculus}

The operator calculus has the expected properties, which follows with the standard proofs. Composition of operators yields
$$
L_{\textup{(cl)}}^{\mu_1;\vec{\ell}}(Z;\R\times\overline{\R}_{+}) \times L_{\textup{(cl)}}^{\mu_2;\vec{\ell}}(Z;\R\times\overline{\R}_{+}) \to L_{\textup{(cl)}}^{\mu_1+\mu_2;\vec{\ell}}(Z;\R\times\overline{\R}_{+}),
$$
and in the classical case $\sym(AB) = \sym(A)\sym(B)$.

Likewise, the formal adjoint with respect to the $L^2$-inner product on $Z$ gives a map
$$
L_{\textup{(cl)}}^{\mu;\vec{\ell}}(Z;\R\times\overline{\R}_{+}) \ni A(\sigma,\tau) \mapsto A^*(\sigma,\tau) \in L_{\textup{(cl)}}^{\mu;\vec{\ell}}(Z;\R\times\overline{\R}_{+}),
$$
and in the classical case $\sym(A^*) = \sym(A)^*$.

Parametrices to parameter-dependent elliptic elements exist and are constructed in the usual way. Here $A(\sigma,\tau) \in L_{\textup{cl}}^{\mu;\vec{\ell}}(Z;\R\times\overline{\R}_{+})$ is parameter-dependent elliptic if $\sym(A)$ is invertible on $(T^*Z\times\R\times\overline{\R}_{+})\setminus 0$. This is equivalent to the existence of a parameter-dependent parametrix $B(\sigma,\tau) \in L_{\textup{cl}}^{-\mu;\vec{\ell}}(Z;\R\times\overline{\R}_{+})$ such that both
$$
A(\sigma,\tau)B(\sigma,\tau) - I,\, B(\sigma,\tau)A(\sigma,\tau) - I \in L^{-\infty}(Z;\R\times\overline{\R}_{+}).
$$

\subsubsection*{Holomorphic families and kernel cut-off}

We need operator families
$$
A(\sigma,\tau) : C^{\infty}(Z) \to C^{\infty}(Z)
$$
that depend holomorphically on $\sigma \in \C$ in the entire complex plane. We write $M_{O,\textup{(cl)}}^{\mu;\vec{\ell}}(Z;\overline{\R}_+)$ to denote such holomorphic families so that, when restricted to lines parallel to the real axis, the function
$$
\R \ni \gamma \mapsto A(\sigma + i\gamma,\tau) \in L_{\textup{(cl)}}^{\mu;\vec{\ell}}(Z;\R\times\overline{\R}_+), \quad (\sigma,\tau) \in \R\times\overline{\R}_+,
$$
is locally bounded.

Restricting to different lines $\Im(\sigma) = \gamma$ in the complex plane gives families that are related by asymptotic expansions
$$
A(\sigma+i\gamma,\tau) \sim \sum\limits_{j=0}^{\infty}\frac{(i\gamma)^j}{j!}\partial^j_\sigma A(\sigma,\tau)
$$
in $L_{\textup{(cl)}}^{\mu;\vec{\ell}}(Z;\R\times\overline{\R}_+)$, which hold locally uniformly with respect to $\gamma \in \R$. In particular,
$$
M_{O}^{\mu;\vec{\ell}}(Z;\overline{\R}_+)\cap L^{\mu';\vec{\ell}}(Z;\{\Im(\sigma)=\gamma\}\times\overline{\R}_+) = M_{O}^{\mu';\vec{\ell}}(Z;\overline{\R}_+)
$$
for $\mu' \leq \mu$ and any $\gamma \in \R$. Restriction to the real line thus gives a well-defined map
$$
M_{O,\textup{(cl)}}^{\mu;\vec{\ell}}(Z;\overline{\R}_+)/M_{O}^{-\infty}(Z;\overline{\R}_+) \to L_{\textup{(cl)}}^{\mu;\vec{\ell}}(Z;\R\times\overline{\R}_+)/L^{-\infty}(Z;\R\times\overline{\R}_+).
$$
This map is an isomorphism. Its inverse is given by the kernel cut-off operator
\begin{equation}\label{HphiSpaces}
H(\phi) : L_{\textup{(cl)}}^{\mu;\vec{\ell}}(Z;\R\times\overline{\R}_+) \to M_{O,\textup{(cl)}}^{\mu;\vec{\ell}}(Z;\overline{\R}_+).
\end{equation}
Here $\phi \in C_c^{\infty}(\R)$ is arbitrary with $\phi \equiv 1$ near zero, and the kernel cut-off operator is defined as $H(\phi)A = {\mathcal F}_{t \to \sigma}\phi(t){\mathcal F}^{-1}_{\sigma \to t}A$, which can be written as an oscillatory integral
\begin{equation}\label{kernelcutoff}
[H(\phi)A](\sigma+i\gamma,\tau) = \frac{1}{2\pi} \iint e^{-its}e^{t\gamma}\phi(t)A(\sigma-s,\tau)\,dt ds \\
\end{equation}
for $\sigma + i\gamma \in \C$ and $\tau \geq 0$. The standard regularization procedure applied to this integral shows that $H(\phi)A \in M_{O,\textup{(cl)}}^{\mu;\vec{\ell}}(Z;\overline{\R}_+)$ (the analyticity follows by direct verification of the Cauchy-Riemann equations). Moreover,
\begin{align*}
A(\sigma,\tau) - [H(\phi)A](\sigma,\tau) &= \frac{1}{2\pi} \iint e^{-its}(1-\phi(t))A(\sigma-s,\tau)\,dt ds \\
&= \frac{i^k}{2\pi} \iint e^{-its}\frac{1-\phi(t)}{t^k}[\partial^k_{\sigma}A](\sigma-s,\tau)\,dt ds
\end{align*}
for any $k \in \N_0$, and so $A(\sigma,\tau) - [H(\phi)A](\sigma,\tau) \in L^{-\infty}(Z;\R\times\overline{\R}_+)$. The kernel cut-off operator $H(\phi)$ is a continuous map between the Fr{\'e}chet spaces \eqref{HphiSpaces}.

Holomorphic families of parameter-dependent pseudodifferential operators which are then utilized as operator-valued symbols and the kernel cut-off construction originate from Schulze's theory of pseudodifferential operators on manifolds with conical and more general singularities \cite{SchuNH}. The oscillatory integral representation of the kernel cut-off operator and detailed proofs of the above claims can be found, e.g., in \cite[Section~3]{KrainerVolterra} (for a different symbol class, but the arguments carry over as stated there).

\begin{lemma}\label{AnalyticSmoothingApprox}
Let $A \in M^{\mu;\vec{\ell}}_O(Z;\overline{\R}_+)$. Then there exist $A_j \in M^{-\infty}_O(Z;\overline{\R}_+)$ such that $A_j \to A$ in $M^{\mu';\vec{\ell}}_O(Z;\overline{\R}_+)$ as $j \to \infty$ for every $\mu' > \mu$. The $A_j \in M^{-\infty}_O(Z;\overline{\R}_+)$ can be explicitly constructed to depend linearly and continuously on $A \in M^{\mu;\vec{\ell}}_O(Z;\overline{\R}_+)$.
\end{lemma}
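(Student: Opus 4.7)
The plan is to reduce to the density of smoothing families in the non-holomorphic parameter-dependent calculus, and to restore holomorphy at the end via the kernel cut-off $H(\phi)$. Fix $\phi \in C_c^{\infty}(\R)$ with $\phi \equiv 1$ near the origin and set $R := A - H(\phi)(A|_{\R})$. Both summands lie in $M_O^{\mu;\vec{\ell}}(Z;\overline{\R}_+)$, so $R$ does too, and the integration-by-parts identity displayed right after \eqref{kernelcutoff} shows that $R|_{\sigma\in\R} \in L^{-\infty}(Z;\R\times\overline{\R}_+)$. The intersection identity recalled in the excerpt, specialised to $\mu' = -\infty$, then places $R$ in $M_O^{-\infty}(Z;\overline{\R}_+)$. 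This $R$ depends linearly and continuously on $A$.

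Next I would produce a sequence of continuous linear maps
\[
T_j : L^{\mu;\vec{\ell}}(Z;\R\times\overline{\R}_+) \longrightarrow L^{-\infty}(Z;\R\times\overline{\R}_+)
\]
such that $T_j B \to B$ in $L^{\mu';\vec{\ell}}(Z;\R\times\overline{\R}_+)$ for every $B$ and every $\mu' > \mu$. A concrete candidate is $T_j B(\sigma,\tau) := \chi_j(\sigma,\tau)\,\Psi_j B(\sigma,\tau)\Psi_j$, where $\Psi_j \in L^{-\infty}(Z)$ is a Friedrichs-type smoothing of the identity on $Z$ (for instance the heat operator $e^{-\Delta_Z/j}$), and $\chi_j \in C_c^{\infty}(\R\times\overline{\R}_+)$ is a sequence of anisotropic cut-offs with $\chi_j \equiv 1$ on expanding $\vec{\ell}$-balls in $(\sigma,\tau)$. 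Compact support of $\chi_j$ together with operator-smoothing of $\Psi_j$ on both sides puts $T_j B$ into $L^{-\infty}(Z;\R\times\overline{\R}_+)$, while $L^{\mu';\vec{\ell}}$-convergence is a routine (if careful) symbol-seminorm estimate based on the anisotropic bracket $\langle(\zeta,\sigma,\tau)\rangle_{\vec{\ell}}$, carried out chart by chart via a partition of unity on $Z$.

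Finally, set
\[
A_j \;:=\; H(\phi)\bigl(T_j(A|_{\R})\bigr) + R.
\]
Since $H(\phi)$ maps $L^{-\infty}$ continuously into $M_O^{-\infty}$ (by applying \eqref{HphiSpaces} for every $\mu$ and intersecting), and $R \in M_O^{-\infty}(Z;\overline{\R}_+)$, we have $A_j \in M_O^{-\infty}(Z;\overline{\R}_+)$. By the very definition of $R$,
\[
A - A_j \;=\; H(\phi)(A|_{\R}) - H(\phi)\bigl(T_j(A|_{\R})\bigr) \;=\; H(\phi)\bigl(A|_{\R} - T_j(A|_{\R})\bigr),
\]
so continuity of $H(\phi):L^{\mu';\vec{\ell}} \to M_O^{\mu';\vec{\ell}}$ together with the previous step gives $A_j \to A$ in $M_O^{\mu';\vec{\ell}}(Z;\overline{\R}_+)$ for every $\mu' > \mu$. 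Linearity and continuity of the assignment $A \mapsto A_j$ are inherited from the linearity and continuity of the building blocks.

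The main technical obstacle is the middle step --- producing the $T_j$ as honest linear continuous smoothing approximants compatible with the anisotropic order $\vec{\ell}$ and with Schwartz decay in the parameters $(\sigma,\tau)$. Once that is in hand, the kernel cut-off and the additive correction $R$ assemble the holomorphic approximation $A_j$ essentially for free.
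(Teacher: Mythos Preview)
Your overall architecture is correct and coincides with the paper's: approximate $A|_{\sigma\in\R}$ by smoothing families in the non-holomorphic class $L^{\mu;\vec{\ell}}(Z;\R\times\overline{\R}_+)$, then push through the kernel cut-off $H(\phi)$ to restore holomorphy, using the continuity of $H(\phi)$ and the intersection identity $M_O^{\mu;\vec{\ell}}\cap L^{\mu';\vec{\ell}} = M_O^{\mu';\vec{\ell}}$. Your additive correction $R = A - H(\phi)(A|_\R) \in M_O^{-\infty}$ is a clean way to package this; the paper achieves the same effect by writing $a_j = a - H(\phi)b_j$ directly.

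Where you diverge from the paper is in the construction of the smoothing approximants. You propose combining a heat-type mollifier $\Psi_j = e^{-\Delta_Z/j}$ on $Z$ with a cut-off $\chi_j(\sigma,\tau)$ in the parameters alone. The paper instead passes to local symbols $a(z,\zeta;\sigma,\tau)$ via a partition of unity on $Z$ and multiplies by a dilated excision function $\chi(\zeta/j^{\ell_1},\sigma/j^{\ell_2},\tau/j^{\ell_3})$ in \emph{all} covariables simultaneously. This single cut-off makes $a - b_j$ compactly supported in $(\zeta,\sigma,\tau)$, hence $S^{-\infty}$ for free, and the convergence $b_j \to 0$ in $S^{\mu';\vec{\ell}}$ for $\mu'>\mu$ is immediate from the support condition $|(\zeta,\sigma,\tau)|_{\vec{\ell}}\geq j$ on $b_j$. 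No operator-level smoothing on $Z$ is needed at all. Your self-identified ``main technical obstacle'' --- verifying that $\chi_j\Psi_jB\Psi_j \to B$ in the anisotropic topology, which requires controlling the interplay between the pseudodifferential composition with $e^{-\Delta_Z/j}$ and the expanding $\chi_j$ --- is therefore an artifact of separating the $\zeta$-smoothing from the $(\sigma,\tau)$-cutoff. Your route can likely be completed, but it replaces a one-line symbol estimate by a genuinely delicate analysis of heat-operator compositions in anisotropic classes.
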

\begin{proof}
Let $\phi_k$ be a partition of unity subordinate to some finite covering of $Z$ by coordinate charts, and let $\psi_k$ be supported in the same coordinate neighborhood as $\phi_k$ with $\psi_k \equiv 1$ in a neighborhood of the support of $\phi_k$, $k=1,\ldots,N$. Write
$$
A (\sigma,\tau) = \sum\limits_{k=1}^N \phi_kA(\sigma,\tau) \psi_k+ R(\sigma,\tau),
$$
where $R(\sigma,\tau) \in M^{-\infty}_O(Z;\overline{\R}_+)$. Each operator $\phi_kA(\sigma,\tau)\psi_k$ is the pull-back of a local operator from an open set in $\R^f$ with a symbol $a_k(z,\zeta;\sigma,\tau)$ that is holomorphic in $\sigma \in \C$ and satisfies the estimates \eqref{symbolestimate} when restricted to lines $\{\sigma + i\gamma;\;\sigma \in \R\}$, and these estimates hold locally uniformly with respect to the imaginary part $\gamma \in \R$. The claim of the lemma thus reduces to the corresponding statement for such local symbols $a(z,\zeta;\sigma,\tau)$. For brevity, we denote the real anisotropic local symbol class that is based on the estimates \eqref{symbolestimate} by $S^{\mu;\vec{\ell}}$, and the holomorphic symbols by $S_O^{\mu;\vec{\ell}}$.

Let $a(z,\zeta;\sigma,\tau) \in S_O^{\mu;\vec{\ell}}$ be arbitrary, and let $\chi \in C^{\infty}(\R^f\times\R\times\R)$ be such that $\chi \equiv 0$ for $|(\zeta,\sigma,\tau)|_{\vec{\ell}} \leq 1$ and $\chi \equiv 1$ for $|(\zeta,\sigma,\tau)|_{\vec{\ell}} \geq 2$, and let
$$
b_j(z,\zeta;\sigma,\tau) = \chi\bigl(\tfrac{\zeta}{j^{\ell_1}},\tfrac{\sigma}{j^{\ell_2}},\tfrac{\tau}{j^{\ell_3}}\bigr)a(z,\zeta;\sigma,\tau) \in S^{\mu;\vec{\ell}}, \; j \in \N,
$$
where $a$ is restricted to $\sigma \in \R$. Then $b_j \to 0$ as $j \to \infty$ in $S^{\mu';\vec{\ell}}$ for every $\mu' > \mu$, and $a - b_j \in S^{-\infty}$. Now let $c_j = H(\phi)b_j \in S_O^{\mu;\vec{\ell}}$ with the kernel cut-off operator \eqref{kernelcutoff} (at the level of local symbols, $z$ and $\zeta$ are extra parameters in the formula). Then $b_j - c_j \in S^{-\infty}$, and by the continuity of the kernel cut-off operator we have $c_j \to 0$ in $S^{\mu';\vec{\ell}}_O$ for $\mu' > \mu$. The lemma then follows with
$$
a_j = a - c_j \in S^{\mu;\vec{\ell}}_O \cap S^{-\infty} = S^{-\infty}_O.
$$
\end{proof}

\subsection{Residual operators}\label{sec-ResidualModel}
 
Let $H^s_b = H^{s}_b(\R_+\times Z)$ be the standard $b$-Sobolev space of smoothness $s \in \R$ based on $L^2_b = L^2_b(\R_+\times Z)$ on $\R_+\times Z$. Then
$$
\S_0(\R_+\times Z) = \bigcap_{s,\alpha \in \R} x^{\alpha}H^s_b(\R_+\times Z),
$$
where $\S_0(\R_+\times Z)$ is the space of $C^{\infty}$ functions that are rapidly decreasing as $x \to \infty$ and vanish to infinite order at $x=0$.

Let $\Psi_G^{-\infty}(\R_+\times Z)$ denote the space of all continuous operators
$$
G : \S_0(\R_+\times Z) \to \S_0(\R_+\times Z)
$$
with the following properties:
\begin{itemize}
\item $G$ has a formal adjoint $G^*$ with respect to the $L^2_b$-inner product, and both $G$ and $G^*$ are continuous maps
$$
G,\,G^* : \S_0(\R_+\times Z) \to \S_0(\R_+\times Z).
$$
\item For every $\alpha \in \R$, both $G$ and $G^*$ extend by continuity to continuous operators
$$
G,\,G^* : x^{\alpha}H^s_b(\R_+\times Z) \to x^{\alpha'}H_b^{s'}(\R_+\times Z)
$$
for all $s,s' \in \R$, and every $\alpha' \leq \alpha$.
\end{itemize}

\begin{lemma}\label{ResGreen}
Let $G \in \Psi^{-\infty}_G(\R_+\times Z)$ be arbitrary.
\begin{enumerate}[(a)]
\item $x^{-\beta}Gx^{\beta} \in \Psi^{-\infty}_G(\R_+\times Z)$ for all $\beta \in \R$.
\item Let $\omega \in C_c^{\infty}(\overline{\R}_+)$ be such that $\omega \equiv 1$ near $x = 0$. Then
$$
(1-\omega)G,\,G(1-\omega) : x^{\alpha}H^s_b \to x^{\alpha'}H^{s'}_b
$$
for all $s,s',\alpha,\alpha' \in \R$.
\end{enumerate}
\end{lemma}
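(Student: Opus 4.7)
My plan is to reduce both parts to the defining mapping properties of $\Psi^{-\infty}_G$ via two simple algebraic observations.

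For (a), I would first note that $\S_0 = \bigcap_{s,\alpha} x^\alpha H^s_b$ is invariant under multiplication by $x^\beta$ for every $\beta\in\R$, whence $x^{-\beta}Gx^\beta$ maps $\S_0$ to $\S_0$ continuously. The measure $\tfrac{dx}{x}dz$ defining $L^2_b$ is invariant under dilations in $x$, so multiplication by the real-valued function $x^\beta$ is formally self-adjoint on $L^2_b$; consequently $(x^{-\beta}Gx^\beta)^* = x^\beta G^* x^{-\beta}$, which has the same conjugated form with $\beta$ replaced by $-\beta$. Continuity on weighted $b$-Sobolev spaces then follows from the fact that multiplication by $x^\beta$ is an isometric isomorphism $x^\alpha H^s_b \to x^{\alpha+\beta}H^s_b$, so the composition
\[
x^\alpha H^s_b \xrightarrow{x^\beta} x^{\alpha+\beta}H^s_b \xrightarrow{G} x^{\alpha'+\beta}H^{s'}_b \xrightarrow{x^{-\beta}} x^{\alpha'}H^{s'}_b
\]
is bounded whenever $\alpha'+\beta\leq\alpha+\beta$, i.e.\ $\alpha'\leq\alpha$, and the argument for the adjoint is identical.

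For (b), the key preliminary observation is that for every $\gamma\geq\alpha$ and every $s\in\R$, multiplication by $1-\omega$ is bounded $x^\alpha H^s_b \to x^\gamma H^s_b$. Indeed, $x^{-\gamma}(1-\omega)u = \bigl(x^{\alpha-\gamma}(1-\omega)\bigr)\cdot(x^{-\alpha}u)$, and the coefficient $x^{\alpha-\gamma}(1-\omega)$ extends to a smooth bounded function on $\overline{\R}_+\times Z$ with all iterated $b$-derivatives bounded: the vanishing of $1-\omega$ near $x=0$ absorbs any potential singularity of $x^{\alpha-\gamma}$ at the origin, while at infinity $1-\omega\equiv 1$ together with $\alpha-\gamma\leq 0$ ensures boundedness, and the same reasoning controls all $(xD_x)^j$-derivatives. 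Multiplication by such a function preserves $H^s_b$ for every $s\in\R$.

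With this in hand, both mapping statements of (b) reduce to choosing an intermediate weight. For $G(1-\omega)$ I would set $\gamma:=\max(\alpha,\alpha')$ and compose
\[
x^\alpha H^s_b \xrightarrow{1-\omega} x^\gamma H^s_b \xrightarrow{G} x^{\alpha'}H^{s'}_b,
\]
using $\gamma\geq\alpha$ and $\alpha'\leq\gamma$; for $(1-\omega)G$ I would set $\beta:=\min(\alpha,\alpha')$ and compose
\[
x^\alpha H^s_b \xrightarrow{G} x^\beta H^{s'}_b \xrightarrow{1-\omega} x^{\alpha'}H^{s'}_b,
\]
using $\beta\leq\alpha$ and $\alpha'\geq\beta$. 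The only real technical point in the whole argument is the $b$-multiplier claim from the previous paragraph, and that reduces to an elementary Leibniz-rule computation since $(xD_x)^j(1-\omega)$ is compactly supported in $x$ for $j\geq 1$; in particular, no smoothing or compactness argument is needed.
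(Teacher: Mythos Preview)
Your argument is correct and rests on the same observation as the paper's proof: multiplication by $x^{-\beta}(1-\omega)$ is a bounded $b$-multiplier on $H^s_b$ for every $\beta\geq 0$, which lets $(1-\omega)$ shift weights freely. The only organizational difference is that the paper first handles $(1-\omega)G$ and then derives $G(1-\omega)$ either by taking adjoints or by invoking part~(a), whereas you treat both compositions symmetrically with the intermediate-weight trick; this is a cosmetic variation, not a different idea.
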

\begin{proof}
Part (a) follows at once from the mapping properties in the definition. For (b) we note that
$$
x^{-\beta}(1-\omega) : x^{\alpha'}H^{s'}_b \to x^{\alpha'}H^{s'}_b
$$
is continuous for all $\beta \geq 0$, and so
$$
(1-\omega) G = x^{\beta} [x^{-\beta}(1-\omega)] G : x^{\alpha}H^s_b \to x^{\alpha'+\beta}H^{s'}_b
$$
for all $s,s' \in \R$ and all $\alpha' \leq \alpha$ and $\beta \geq 0$. As $\beta \geq 0$ is arbitrary we obtain the asserted mapping properties for $(1-\omega)G$. The claim for $G(1-\omega)$ then follows by considering the adjoint, or by arguing as above by writing
$$
G(1-\omega) = x^{\beta} [x^{-\beta} G x^{\beta}] [x^{-\beta}(1-\omega)]
$$
for arbitary $\beta \geq 0$ and using (a).
\end{proof}

\noindent
By Lemma~\ref{ResGreen}, the operators $(1-\omega)G$ and $G(1-\omega)$ are integral operators of the form
$$
\int_{\R_+\times Z} g(x,z,x',z')u(x',z')\,\frac{dx'}{x'}\,dz'
$$
with kernels
$$
g(x,z,x',z') \in \S_0([\R_+\times Z] \times [\R_+\times Z]) = \S_0(\R_+\times Z) \hat{\otimes}_{\pi} \S_0(\R_+\times Z).
$$
The kernels of residual operators $G$ thus exhibit rapid decay as $x,\,x' \to \infty$. We do not control their behavior as $x \to 0$ and $x' \to 0$ beyond the mapping properties stated in the definition of these operators as our focus lies on the behavior at infinity.

\subsection{Mellin pseudodifferential operators on {\boldmath $\R_+\times Z$}}\label{sec-MellinCalcReview}

We review some aspects of the standard Mellin pseudodifferential calculus on $\R_+\times Z$, see \cite{GilSchulzeSeiler0,GilSchulzeSeiler,KrainerMellin,SchuNH}.

Let $L^m(Z;\R)$ be the usual (isotropic) space of pseudodifferential operators of order $m \in \R$ on $Z$ that depend on the parameter $\sigma \in \R$. We consider an operator-valued symbol $p(x,\sigma) \in C^{\infty}(\R_+,L^m(Z;\R))$ and associate with it the Mellin pseudodifferential operator
$$
\opm(p) : C_c^{\infty}(\R_+\times Z) \to C^{\infty}(\R_+\times Z)
$$
given by
$$
[\opm(p)u](x) = \frac{1}{2\pi}\int_{\R}\int_0^{\infty}\Bigl(\frac{x}{x'}\Bigr)^{i\sigma}p(x,\sigma)u(x')\,\frac{dx'}{x'}\,d\sigma.
$$
We use here the convention
\begin{align*}
\bigr(M u\bigl)(\sigma) &= \int_0^{\infty}x^{-i\sigma}u(x)\,\frac{dx}{x}, \quad \sigma \in \R, \\
\bigr(M^{-1}v\bigl)(x) &= \frac{1}{2\pi}\int_{\R}x^{i\sigma}v(\sigma)\,d\sigma, \quad x > 0,
\end{align*}
for the Mellin transform and its inverse. If
$$
p \in C^{\infty}_B(\R_+,L^m(Z;\R)),
$$
where $C^{\infty}_B(\R_+,F)$ stands for the $C^{\infty}$-functions $g$ on $\R_+$ with values in the Fr{\'e}chet space $F$ such that all derivatives $(xD_x)^jg$ are bounded on $\R_+$, then
$$
\opm(p) : H^s_b(\R_+\times Z) \to H^{s-m}_b(\R_+\times Z)
$$
is continuous for all $s \in \R$. If the operator-valued symbol extends with respect to $\sigma$ to an entire function such that
$$
p(x,\sigma)\big|_{\Im(\sigma) = \gamma} \in C^{\infty}_B(\R_+,L^m(Z;\R))
$$
locally uniformly with respect to $\gamma \in \R$, i.e., if
$$
p(x,\sigma) \in C^{\infty}_B(\R_+,M^m_O(Z)),
$$
then
$$
\opm(p) : x^{\alpha}H^s_b(\R_+\times Z) \to x^{\alpha}H^{s-m}_b(\R_+\times Z)
$$
is continuous for all $s,\alpha \in \R$, and gives rise to a continuous operator
\begin{equation}\label{MellinS0}
\opm(p) :  \S_0(\R_+ \times Z) \to \S_0(\R_+ \times Z).
\end{equation}
This is not true without analyticity in the covariable $\sigma$, and is based on
\begin{equation}\label{conjugate}
x^{-\beta} \opm(p) x^{\beta} = \opm(p_{\beta}) : C_c^{\infty}(\R_+\times Z) \to C^{\infty}(\R_+\times Z),
\end{equation}
where $p_{\beta}(x,\sigma) = p(x,\sigma - i\beta)$ for every $\beta \in \R$.

Standard Mellin pseudodifferential operators with holomorphic symbols form a well-behaved calculus of operators acting in the spaces \eqref{MellinS0} (and extended to larger spaces, such as the weighted $H^s_b$-spaces, by continuity). One of the features of the calculus is the availability of explicit oscillatory integral formulas for analyzing formal adjoints and compositions of operators. We recall the statements and formulas.

\subsubsection*{Formal adjoints}

Let $p(x,\sigma) \in C^{\infty}_B(\R_+,M^{m}_O(Z))$. Then the formal adjoint of $\opm(p)$ with respect to the $L^2_b$-inner product on $\R_+\times Z$ is given by
$$
[\opm(p)]^* = \opm(p^{\star}) : \S_0(\R_+ \times Z) \to \S_0(\R_+ \times Z),
$$
where $p^{\star} \in C^{\infty}_B(\R_+,M^{m}_O(Z))$. For every $N \in \N$
\begin{equation}\label{MelAdjoint}
\begin{aligned}
p^{\star}(x,\sigma) &= \frac{1}{2\pi}\iint y^{-i\eta}p(xy,\overline{\sigma}+\eta)^*\,\frac{dy}{y}d\eta \\
&= \sum\limits_{k=0}^{N-1}\frac{1}{k!} [(-xD_x)^k\partial_{\sigma}^kp](x,\overline{\sigma})^* + r_N(x,\sigma)
\end{aligned}
\end{equation}
with
\begin{equation}\label{MelAdjointRem}
r_N(x,\sigma) = \frac{1}{2\pi}\!\int_0^1\frac{(1-\theta)^{N-1}}{(N-1)!}\iint y^{-i\eta}[(-xD_x)^{N}\partial_{\sigma}^Np](xy,\overline{\sigma}+\theta\eta)^*\,\frac{dy}{y}d\eta d\theta.
\end{equation}
We have $r_N(x,\sigma) \in C^{\infty}_B(\R_+,M^{m-N}_O(Z))$. The adjoints that appear with the operator-valued symbols in these formulas refer to the $L^2$-inner product on $Z$.

\subsubsection*{Composition}

Let $p_j(x,\sigma) \in C^{\infty}_B(\R_+,M^{m_j}_O(Z))$. Then the composition
$$
\opm(p_1) \, \opm(p_2) = \opm(p_1{\#}p_2) : \S_0(\R_+ \times Z) \to \S_0(\R_+ \times Z)
$$
with $p_1{\#}p_2 \in C^{\infty}_B(\R_+,M^{m_1+m_2}_O(Z))$. For every $N \in \N$
\begin{equation}\label{MelLeibniz}
\begin{aligned}
p_1{\#}p_2(x,\sigma) &= \frac{1}{2\pi}\iint y^{-i\eta}p_1(x,\sigma+\eta)p_2(xy,\sigma)\,\frac{dy}{y}d\eta \\
&= \sum\limits_{k=0}^{N-1}\frac{1}{k!}[\partial_{\sigma}^kp_1](x,\sigma)[(xD_x)^kp_2](x,\sigma) + r_N(x,\sigma)
\end{aligned}
\end{equation}
with
\begin{equation}\label{MelLeibnizRem}
r_N(x,\sigma) = \frac{1}{2\pi}\!\int_0^1\!\frac{(1-\theta)^{N-1}}{(N-1)!}\iint y^{-i\eta}[\partial_{\sigma}^Np_1](x,\sigma+\theta\eta)[(xD_{x})^Np_2](xy,\sigma)\,\frac{dy}{y}d\eta d\theta.
\end{equation}
We have $r_N \in C^{\infty}_B(\R_+,M^{m_1+m_2-N}_O(Z))$.

\subsubsection*{Anisotropic Mellin calculus}

The anisotropic operator calculus on $\R_+ \times Z$ is based on an anisotropy vector $\vec{\ell} = (\ell_1,\ell_2,\ell_3) \in \N^3$. The first two parameters $\ell_1$ and $\ell_2$ are associated with $Z$ and $\R_+$, respectively. Only the case $\ell_1=\ell_2$ is relevant for building the final calculus on $M$; however, the case $\ell_1 \neq \ell_2$ on $\R_+\times Z$ is needed in other contexts. If $\ell_1 \neq \ell_2$ the isotropic Mellin calculus reviewed above does not suffice, and we need an anisotropic version.

The starting point for the anisotropic Mellin calculus is the parameter-dependent class $L^{m;(\ell_1,\ell_2)}(Z;\R)$. The local symbols in coordinate patches for $Z$ for these operator families are $a(z,\zeta;\sigma)$ such that
$$
|D_z^{\alpha}\partial_{\zeta}^{\beta}\partial_{\sigma}^{\gamma}a(z,\zeta;\sigma)| \lesssim \langle (\zeta,\sigma) \rangle_{(\ell_1,\ell_2)}^{m-\ell_1|\beta|-\ell_2|\gamma|}.
$$
This is similar to, but simpler than, the estimates \eqref{symbolestimate} for the operator families discussed in Section~\ref{sec-ZFamilies}. The associated holomorphic class is $M_O^{m;(\ell_1,\ell_2)}(Z)$. Anisotropic Mellin pseudodifferential operators are based on operator-valued symbols
$$
p(x,\sigma) \in C^{\infty}_B(\R_+,M_O^{m;(\ell_1,\ell_2)}(Z)),
$$
and the operators $\opm(p)$ again act between the spaces \eqref{MellinS0}. The statements and formulas for formal adjoints and compositions above remain valid when the isotropic class of holomorphic symbols is consistently replaced by $M_O^{m;(\ell_1,\ell_2)}(Z)$.

What changes in the anisotropic situation are the $b$-Sobolev spaces on $\R_+\times Z$. For $s \in \R$ pick any $R(\sigma) \in L^{s;(\ell_1,\ell_2)}(Z;\R)$ that is invertible with $R^{-1}(\sigma) \in L^{-s;(\ell_1,\ell_2)}(Z;\R)$, and define
\begin{gather*}
H_b^{s;(\ell_1,\ell_2)}(\R_+\times Z) = \textup{Closure of $C_c^{\infty}(\R_+\times Z)$ with respect to the norm} \\
\|u\|_{H_b^{s;(\ell_1,\ell_2)}} = \|\opm(R)u\|_{L^2_b}.
\end{gather*}
This space does not depend on the choice of $R(\sigma)$, is localizable in coordinate patches on $Z$ (as a mixed anisotropic Mellin-Fourier Sobolev space), and, for $\ell_1=\ell_2 = 1$, reproduces the standard (isotropic) $b$-Sobolev space of smoothness $s \in \R$. By localizing we furthermore obtain the inclusions
\begin{equation}\label{HsIncl}
\begin{gathered}
H_b^{s(\ell_1+\ell_2)}(\R_+\times Z) \hookrightarrow H_b^{s;(\ell_1,\ell_2)}(\R_+\times Z) \hookrightarrow H_b^{\frac{s}{\ell_1+\ell_2}}(\R_+\times Z), \quad s \geq 0, \\
H_b^{\frac{s}{\ell_1+\ell_2}}(\R_+\times Z) \hookrightarrow H_b^{s;(\ell_1,\ell_2)}(\R_+\times Z) \hookrightarrow H_b^{s(\ell_1+\ell_2)}(\R_+\times Z),
\quad s < 0.
\end{gathered}
\end{equation}
Reductions of order as used in the definition of the spaces can be constructed by adding an extra parameter $\lambda \in \R$ to the calculus, and then choosing a parameter-dependent elliptic operator $R(\sigma,\lambda) \in L_{\textup{cl}}^{s;(\ell_1,\ell_2,\ell_1)}(Z;\R\times\R)$ (quantized from an invertible principal symbol on $[T^*Z\times\R\times\R]\setminus 0$, e.g., $|(\zeta,\sigma,\lambda)|_{(\ell_1,\ell_2,\ell_1)}^s$, where an arbitrary Riemannian metric on $Z$ is used to measure the lengths of covectors $\zeta \in T^*Z$). Then there exists a parameter-dependent parametrix $\tilde{R}(\sigma,\lambda) \in L_{\textup{cl}}^{-s;(\ell_1,\ell_2,\ell_1)}(Z;\R\times\R)$, sufficiently refined so as to satisfy
$$
R(\sigma,\lambda)\tilde{R}(\sigma,\lambda) - I,\, \tilde{R}(\sigma,\lambda)R(\sigma,\lambda) - I \in C_c^{\infty}(\R\times\R,L^{-\infty}(Z)).
$$
For $|\lambda_0| \gg 0$ large enough, $R(\sigma):= R(\sigma,\lambda_0) \in L^{s;(\ell_1,\ell_2)}(Z;\R)$ is then invertible with inverse $R^{-1}(\sigma) = \tilde{R}(\sigma,\lambda_0) \in L^{-s;(\ell_1,\ell_2)}(Z;\R)$.

Anisotropic Mellin operators $\opm(p)$ with $p(x,\sigma) \in C^{\infty}_B(\R_+,M_O^{m;(\ell_1,\ell_2)}(Z))$ are continuous in the anisotropic $b$-Sobolev spaces
$$
\opm(p) : x^{\alpha}H_b^{s;(\ell_1,\ell_2)}(\R_+\times Z) \to x^{\alpha}H_b^{s-m;(\ell_1,\ell_2)}(\R_+\times Z)
$$
for all $s,\alpha \in \R$.

The definition of the residual class $\Psi^{-\infty}_G(\R_+\times Z)$ in Section~\ref{sec-ResidualModel} is based on mapping properties in the scale of standard $b$-Sobolev spaces with weights. The inclusions \eqref{HsIncl} between the anisotropic and isotropic $b$-Sobolev spaces show that we obtain the same residual operator class if we instead base the definition on the analogous mapping properties in anisotropic weighted spaces $x^{\alpha}H_b^{s;(\ell_1,\ell_2)}(\R_+\times Z)$.

\subsection{Definition of the calculus on {\boldmath $\R_+\times Z$}}

The class $\Psi^{\mu;\vec{\ell}}_{\textup{(cl)}}(\R_+\times Z)$ of (classical) pseudodifferential operators of order $\mu \in \R$ relative to the given anisotropy vector $\vec{\ell} = (\ell_1,\ell_2,\ell_3) \in \N^3$ consists of operators of the form
\begin{equation}\label{AModel}
A = \opm(p) + G : \S_0(\R_+\times Z) \to \S_0(\R_+ \times Z)
\end{equation}
with $G \in \Psi^{-\infty}_G(\R_+\times Z)$, and $p(x,\sigma)$ is an operator-valued holomorphic Mellin symbol of the form
$$
p(x,\sigma) = a(x,\sigma,x)
$$
for some
\begin{equation}\label{asymbol}
a(x,\sigma,\tau) \in C_B^{\infty}(\R_+,M_{O,\textup{(cl)}}^{\mu;\vec{\ell}}(Z;\overline{\R}_+)).
\end{equation}
Note that the variable $x$ enters twice in $p(x,\sigma)$: Once as a parameter, which is the crucial feature of the calculus, and once as an extra variable to allow for additional dependence on $x$.

\begin{enumerate}
\item For the analysis of operators it is relevant that the scalar function
$$
\tau \in M_{O,\textup{cl}}^{\ell_3;\vec{\ell}}(Z;\overline{\R}_+),
$$
which shows that the operator of multiplication by $x$ belongs to $\Psi^{\ell_3;\vec{\ell}}_{\textup{cl}}(\R_+\times Z)$. More generally, $x^j \in \Psi^{j\ell_3;\vec{\ell}}_{\textup{cl}}(\R_+\times Z)$ for $j \in \N$.

\item Observe that
\begin{align*}
(xD_x)p(x,\sigma) &= (xD_x a)(x,\sigma,x) + (\tau D_{\tau}a)(x,\sigma,x) \\
&= [(xD_x  + \tau D_{\tau}) a](x,\sigma,x),
\end{align*}
and
$$
(xD_x  + \tau D_{\tau}) a \in C_B^{\infty}(\R_+,M_{O,\textup{(cl)}}^{\mu;\vec{\ell}}(Z;\overline{\R}_+)).
$$
Thus
$$
(xD_x)^j\partial_{\sigma}^k p(x,\sigma) = [(xD_x  + \tau D_{\tau})^j\partial_{\sigma}^k a](x,\sigma,x)
$$
with
$$
(xD_x  + \tau D_{\tau})^j\partial_{\sigma}^k a \in C_B^{\infty}(\R_+,M_{O,\textup{(cl)}}^{\mu-k\ell_2;\vec{\ell}}(Z;\overline{\R}_+))
$$
for all $j,k \in \N_0$. In particular,
$$
\langle x \rangle^{-\frac{\mu_+}{\ell_3}}p(x,\sigma) \in C_B^{\infty}(\R_+,M_O^{\mu;(\ell_1,\ell_2)}(Z)),
$$
where $\mu_+ = \max\{\mu,0\}$, which shows that
$$
\opm(p) : \S_0(\R_+\times Z) \to \S_0(\R_+ \times Z)
$$
as is implicitly claimed in \eqref{AModel}.

\item The map $(a,u) \mapsto \opm(p)u$ is continuous in
$$
C_B^{\infty}(\R_+,M_{O}^{\mu;\vec{\ell}}(Z;\overline{\R}_+)) \times \S_0(\R_+ \times Z) \to \S_0(\R_+ \times Z).
$$
Thus, if $a_j \in C_B^{\infty}(\R_+,M_{O}^{-\infty}(Z;\overline{\R}_+))$ with $a_j \to a \in C_B^{\infty}(\R_+,M_{O}^{\mu';\vec{\ell}}(Z;\overline{\R}_+))$ as $j \to \infty$ for $\mu' > \mu$, then $\opm(p_j)u \to \opm(p)u$ in $\S_0(\R_+ \times Z)$ as $j \to \infty$ for every $u \in \S_0(\R_+\times Z)$, where $p_j(x,\sigma) = a_j(x,\sigma,x)$.

The existence of such approximating sequences $a_j$ for every operator-valued symbol $a \in C_B^{\infty}(\R_+,M_{O}^{\mu;\vec{\ell}}(Z;\overline{\R}_+))$ follows from Lemma~\ref{AnalyticSmoothingApprox}. The operators $\opm(p_j)$ then are all standard (smoothing) Mellin pseudodifferential operators with holomorphic symbols, and the properties of Mellin pseudodifferential calculus are applicable to these operators, in particular the oscillatory integral formulas for compositions and adjoints as reviewed in Section~\ref{sec-MellinCalcReview}. Such approximation arguments are used to prove that compositions and formal adjoints are well-behaved for the class $\Psi^{\ast;\vec{\ell}}_{\textup{(cl)}}(\R_+\times Z)$.
\end{enumerate}

\subsubsection*{The extended principal symbol}

Taylor expansion with respect to $\tau$ in \eqref{asymbol} gives an asymptotic expansion
$$
a(x,\sigma,\tau) \sim \sum\limits_{j=0}^{\infty} \tau^j a_j(x,\sigma),
$$
viewed merely as an expansion in $C^{\infty}(\R_+,L_{\textup{(cl)}}^{\mu;(\ell_1,\ell_2)}(Z;\R))$ that depends smoothly on the parameter $\tau \in \R_+$, where
$$
a_j(x,\sigma) = \frac{1}{j!}\partial_{\tau}^ja(x,\sigma,0) \in C^{\infty}(\R_+,L_{\textup{(cl)}}^{\mu-j\ell_3;(\ell_1,\ell_2)}(Z;\R)).
$$
In particular,
\begin{equation}\label{pxstay}
p(x,\sigma) \sim \sum\limits_{j=0}^{\infty} x^j a_j(x,\sigma)
\end{equation}
in $C^{\infty}(\R_+,L_{\textup{(cl)}}^{\mu;(\ell_1,\ell_2)}(Z;\R))$. If $a$ is classical, then $p(x,\sigma) \in C^{\infty}(\R_+,L_{\textup{cl}}^{\mu;(\ell_1,\ell_2)}(Z;\R))$ is classical, and the parameter-dependent principal symbol $\sym(p)(z,x,\zeta,\sigma)$ is defined on $\R_+\times [(T^*Z\times\R)\setminus 0]$ and satisfies
$$
\sym(p)(z,x,\varrho^{\ell_1}\zeta,\varrho^{\ell_2}\sigma) = \varrho^{\mu}\sym(p)(z,x,\zeta,\sigma), \quad \varrho > 0.
$$
We let
\begin{equation}\label{ApsiSym}
{}^b\sym_{\psi}(A):= \sym(p) \textup{ on } \R^+\times [(T^*Z\times\R)\setminus 0].
\end{equation}
As the quantization of $p(x,\sigma)$ leading to $A$ is based on the Mellin transform, ${}^b\sym_{\psi}(A)$ is a rescaled version of the usual principal symbol $\sym_{\psi}(A)$ of $A$ on $\R_+\times Z$ (see also Lemma~\ref{Compatibility}, but note that we allow $\ell_1 \neq \ell_2$ here). The relation is given by
$$
\sym_{\psi}(A)(z,x,\zeta,\xi) = {}^b\sym_{\psi}(A)(z,x,\zeta,x\xi),
$$
which is the same relation by rescaling as between the $b$-principal symbol and the ordinary principal symbol of a $b$-pseudodifferential operator on a manifold with boundary \cite{RBM2}.

The operator-valued symbol $a$ also has a parameter-dependent principal symbol $\sym(a)(z,x,\zeta,\sigma,\tau)$ of its own on $\R_+ \times [(T^*Z \times \R \times \overline{\R}_+) \setminus 0]$ that is anisotropic homogeneous in the sense that
$$
\sym(a)(z,x,\varrho^{\ell_1}\zeta,\varrho^{\ell_2}\sigma,\varrho^{\ell_3}\tau) = \varrho^{\mu}\sym(a)(z,x,\zeta,\sigma,\tau), \quad \varrho > 0.
$$
The expansion \eqref{pxstay} for $p(x,\sigma)$ shows that
\begin{equation}\label{prsymres}
\sym(p)(z,x,\zeta,\sigma) = \sym(a)(z,x,\zeta,\sigma,0).
\end{equation}
We define
\begin{equation}\label{AeSym}
\sym_e(A) = \sym(a) \textup{ on } \R_+ \times [(T^*Z \times \R \times \overline{\R}_+) \setminus 0].
\end{equation}
Relation \eqref{prsymres} shows that this \emph{extended principal symbol} of $A$ captures ${}^b\sym_{\psi}(A)$. We take a fiberwise view to explain this further: For every $(x,z) \in \R_+\times Z$, the extended principal symbol $\sym_e(A)$ is determined by its values on a closed anisotropic hemisphere
$$
|\zeta|^{2\ell_2\ell_3} + |\sigma|^{2\ell_1\ell_3} + |\tau|^{2\ell_1\ell_2} = 1, \quad \tau \geq 0,
$$
by homogeneity (an arbitrary Riemannian metric is used to measure the length of $\zeta \in T_z^*Z$). When restricting to $\tau = 0$ we obtain the anisotropic cosphere
$$
|\zeta|^{2\ell_2\ell_3} + |\sigma|^{2\ell_1\ell_3} = 1,
$$
and the extended principal symbol $\sym_e(A)$ restricted to that cosphere then matches ${}^b\sym_{\psi}(A)$. The relevant principal symbol for the calculus on $\R_+\times Z$ is the extension of ${}^b\sym_{\psi}(A)$ from the cosphere bundle to the hemisphere bundle over $\R_+\times Z$, given by $\sym_{e}(A)$.

As $\sym(a)$ determines $a(x,\sigma,\tau)$ modulo $C^{\infty}_B(\R_+,M^{\mu-1;\vec{\ell}}_{O,\textup{cl}}(Z;\overline{\R}_+))$ (that this is indeed the case in the holomorphic category of operator-valued symbols uses the kernel cut-off operator \eqref{HphiSpaces}), the extended principal symbol $\sym_e(A)$ determines $A$ modulo $\Psi^{\mu-1;\vec{\ell}}_{\textup{cl}}(\R_+\times Z)$.

\begin{lemma}
Let $a \in C_B^{\infty}(\R_+,M_{O}^{-\infty}(Z;\overline{\R}_+))$. Then $\opm(p) \in \Psi^{-\infty}_G(\R_+\times Z)$.
\end{lemma}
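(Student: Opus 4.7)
The plan is to verify the two defining properties of $\Psi^{-\infty}_G(\R_+\times Z)$ for $\opm(p)$: continuity of $\opm(p)$ and its formal $L^2_b$-adjoint as maps $\S_0 \to \S_0$, and the extended boundedness $x^{\alpha}H^s_b \to x^{\alpha'}H^{s'}_b$ for every $\alpha' \leq \alpha$ and all $s, s' \in \R$. The continuity $\opm(p) : \S_0 \to \S_0$ follows immediately from point~(3) above, since $a$ belongs in particular to $C^{\infty}_B(\R_+, M_O^{-\infty}(Z;\overline{\R}_+)) \subset C^{\infty}_B(\R_+, M_O^{\mu;\vec{\ell}}(Z;\overline{\R}_+))$ for every $\mu$. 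The bulk of the work is establishing the weighted-Sobolev mapping, which I would do by combining the standard Mellin calculus with an interpolation argument in the weight parameter.

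The key observation is that the Schwartz decay of $a$ in $\tau$ gives $\tau^N a \in C^{\infty}_B(\R_+, M_O^{-\infty}(Z;\overline{\R}_+))$ for every $N \in \N_0$, and evaluating at $\tau = x$ yields $x^N p(x,\sigma) = (\tau^N a)(x,\sigma,x) \in C^{\infty}_B(\R_+, M_O^{-\infty}(Z))$, a bounded family of holomorphic smoothing Mellin symbols on $Z$ to which the standard Mellin pseudodifferential calculus applies. Consequently $\opm(x^N p) = x^N \opm(p)$ is continuous $x^{\alpha} H^s_b \to x^\alpha H^{s'}_b$ for all $s, s', \alpha \in \R$, which rearranges to
$$\opm(p) : x^\alpha H^s_b \to x^{\alpha - N} H^{s'}_b \quad \textup{continuously for every } N \in \N_0.$$
Taking also $N = 0$, for each $u \in x^\alpha H^s_b$ the image $f = \opm(p) u$ lies simultaneously in $x^\alpha H^{s'}_b$ and in $x^{\alpha - N} H^{s'}_b$, so $g = x^{-\alpha} f$ and $x^N g$ both lie in $H^{s'}_b$ with norms controlled by $\|u\|_{x^\alpha H^s_b}$. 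I would then apply H\"older's inequality in $L^2_b$ to obtain
$$\|x^\delta g\|_{L^2_b} \leq \|g\|_{L^2_b}^{1 - \delta/N}\|x^N g\|_{L^2_b}^{\delta/N} \quad \textup{for every } \delta \in [0, N],$$
and extend this interpolation to $H^{s'}_b$ norms via a holomorphic reduction-of-order operator of the kind constructed in Section~\ref{sec-MellinCalcReview}. Taking $\delta = \alpha - \alpha'$ with $N \geq \alpha - \alpha'$ yields $f \in x^{\alpha'} H^{s'}_b$ for every $\alpha' \leq \alpha$, as required.

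For the formal $L^2_b$-adjoint, the formula \eqref{MelAdjoint} identifies $\opm(p)^* = \opm(p^\star)$ with
$$p^\star(x, \sigma) = \frac{1}{2\pi}\iint y^{-i\eta} a(xy, \bar\sigma + \eta, xy)^* \frac{dy}{y}\,d\eta.$$
I would verify that $p^\star$ retains the structural form by introducing $b(x, \sigma, \tau) = \frac{1}{2\pi}\iint y^{-i\eta} a(xy, \bar\sigma + \eta, \tau y)^* \frac{dy}{y}\,d\eta$; the oscillatory-integral techniques reviewed in Section~\ref{sec-MellinCalcReview}, together with closure of $M_O^{-\infty}(Z;\overline{\R}_+)$ under operator adjoints on $Z$, give $b \in C^{\infty}_B(\R_+, M_O^{-\infty}(Z;\overline{\R}_+))$ with $p^\star(x,\sigma) = b(x,\sigma,x)$. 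Hence $\opm(p)^*$ has the same form as $\opm(p)$, and the argument above applies unchanged. The main technical obstacle I anticipate is the extension of the H\"older interpolation from $L^2_b$ to general $H^{s'}_b$: this requires conjugating by a holomorphic reduction-of-order and tracking the locally uniform $\Im\sigma$-dependence of the resulting symbol, which is routine given the kernel cut-off construction but constitutes the step most likely to hide small subtleties.
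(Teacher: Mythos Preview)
Your argument is correct, but it takes two detours that the paper's proof avoids, and both detours are precisely the places you yourself flag as delicate.

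For the direct mapping property, you restrict to integer powers $\tau^N a$ (presumably to stay inside $M_O^{-\infty}(Z;\overline{\R}_+)$, where smoothness at $\tau=0$ matters) and then interpolate in the weight. The paper instead observes that for every \emph{real} $\beta \geq 0$ one has $x^{\beta}p(x,\sigma) = x^{\beta}a(x,\sigma,x) \in C^{\infty}_B(\R_+,M^{-\infty}_O(Z))$: only the evaluated symbol at $\tau = x > 0$ is needed, where $x^{\beta}$ is smooth, and the rapid decay of $a$ in $\tau$ absorbs the growth of $x^\beta$ as $x\to\infty$. This immediately gives $\opm(p): x^{\alpha}H^s_b \to x^{\alpha-\beta}H^{s'}_b$ for every real $\beta \geq 0$, i.e.\ for every $\alpha' \leq \alpha$, with no interpolation. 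Your H\"older step at the $L^2_b$ level is fine, but lifting it to $H^{s'}_b$ by conjugating with a reduction of order is not as innocent as you suggest, since the reduction does not commute with $x^{\delta}$ and the shifted symbol depends on $\delta$; one would more naturally invoke complex interpolation of the scale $\{x^{\alpha}H^{s'}_b\}_{\alpha}$. The paper's route sidesteps all of this.

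For the adjoint, you propose to verify directly that the oscillatory integral $b(x,\sigma,\tau)$ lies in $C^{\infty}_B(\R_+,M^{-\infty}_O(Z;\overline{\R}_+))$. This can be done (the rapid decay of $b$ in $\tau$ requires a contour shift in $\eta$ to convert $\tau^N$ into $(\tau y)^N$), but the paper's argument is much cleaner: from $[x^{\beta}\opm(p^{\star})]^* = \opm(p)x^{\beta} = x^{\beta}\opm(p_{\beta})$ with $p_{\beta}(x,\sigma) = a(x,\sigma - i\beta,x)$, and the fact that $a(x,\sigma-i\beta,\tau)$ is again in $C^{\infty}_B(\R_+,M^{-\infty}_O(Z;\overline{\R}_+))$ by holomorphy, the adjoint reduces to the case already handled. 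This trick exploits the analyticity in $\sigma$ directly and avoids any oscillatory-integral analysis of $p^{\star}$.

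In short: your proof works, but the paper's is shorter and eliminates exactly the two technical steps you were worried about, by using real (not integer) weight shifts and the conjugation identity \eqref{conjugate} for the adjoint.
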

\begin{proof}
We need to verify the defining mapping properties. By assumption on $a(x,\sigma,\tau)$ we have
$$
x^{\beta} p(x,\sigma) = x^{\beta}a(x,\sigma,x) \in C^{\infty}_B(\R_+,M^{-\infty}_O(Z))
$$
for every $\beta \geq 0$, and consequently
$$
x^{\beta} \opm(p) : x^{\alpha}H^s_b(\R_+\times Z) \to x^{\alpha}H^{s'}_b(\R_+\times Z)
$$
for all $s,s' \in \R$ and all $\alpha \in \R$. Thus
$$
\opm(p) : x^{\alpha}H^s_b(\R_+\times Z) \to x^{\alpha'}H^{s'}_b(\R_+\times Z)
$$
for all $s,s',\alpha \in \R$ and all $\alpha' \leq \alpha$.

Similarly, for $\beta \geq 0$, we next want to prove that
$$
x^{\beta}\opm(p^{\star}) : x^{\alpha}H^s_b(\R_+\times Z) \to x^{\alpha}H^{s'}_b(\R_+\times Z)
$$
for all $s,s' \in \R$ and all $\alpha \in \R$, and thus
$$
[\opm(p)]^* = \opm(p^{\star}) : x^{\alpha}H^s_b(\R_+\times Z) \to x^{\alpha'}H^{s'}_b(\R_+\times Z)
$$
for all $s,s',\alpha \in \R$ and all $\alpha' \leq \alpha$. Here $p^{\star}(x,\sigma)$ is the Mellin symbol of the formal adjoint operator and is given by \eqref{MelAdjoint}. Now
$$
[x^{\beta}\opm(p^{\star})]^* = \opm(p)x^{\beta} = x^{\beta}\opm(p_{\beta})
$$
by \eqref{conjugate}, where $p_{\beta}(x,\sigma) = p(x,\sigma - i\beta) = a(x,\sigma - i\beta,x)$. By assumption on $a(x,\sigma,\tau)$ we have
$$
x^{\beta}p_{\beta}(x,\sigma) \in C^{\infty}_B(\R_+,M^{-\infty}_O(Z)),
$$
and from the standard Mellin calculus we get that
$$
[x^{\beta}\opm(p_{\beta})]^* = [x^{\beta}\opm(p^{\star})]^{**} = x^{\beta}\opm(p^{\star}) : x^{\alpha}H^s_b(\R_+\times Z) \to x^{\alpha}H^{s'}_b(\R_+\times Z)
$$
for all $s,s' \in \R$ and all $\alpha \in \R$ as desired.
\end{proof}

\begin{proposition}\label{AsymptoticComplete}
The calculus $\Psi^{\star;\vec{\ell}}(\R_+\times Z)$ is asymptotically complete. Suppose
$$
A_k = \opm(p_k) + G_k \in \Psi^{\mu_k;\vec{\ell}}(\R_+\times Z)
$$
with $\mu_k \geq \mu_{k+1} \to -\infty$ as $k \to \infty$, where
$$
p_k(x,\sigma) = a_k(x,\sigma,x), \quad a_k(x,\sigma,\tau) \in C^{\infty}_B(\R_+,M^{\mu_k;\vec{\ell}}_O(Z;\overline{\R}_+)),
$$
and $G_k \in \Psi^{-\infty}_G(\R_+\times Z)$. Then there exists
$$
a(x,\sigma,\tau) \in C^{\infty}_B(\R_+,M^{\mu_0;\vec{\ell}}_O(Z;\overline{\R}_+))
$$
with
$$
a(x,\sigma,\tau) \sim \sum\limits_{k=0}^{\infty}a_k(x,\sigma,\tau)
$$
in the sense that
$$
a(x,\sigma,\tau) - \sum\limits_{k=0}^{N-1}a_k(x,\sigma,\tau) \in C^{\infty}_B(\R_+,M^{\mu_N;\vec{\ell}}_O(Z;\overline{\R}_+))
$$
for every $N \in \N_0$, and with $A = \opm(p) \in \Psi^{\mu_0;\vec{\ell}}(\R_+\times Z)$, $p(x,\sigma) = a(x,\sigma,x)$, we then have
$$
A - \sum\limits_{k=0}^{N-1}A_k \in \Psi^{\mu_N;\vec{\ell}}(\R_+\times Z)
$$
for all $N \in \N_0$.
\end{proposition}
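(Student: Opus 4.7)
My plan is to perform a Borel-type asymptotic summation of the symbols $a_k$ inside the Fr\'echet space $F_\mu := C^\infty_B(\R_+, M^{\mu;\vec\ell}_O(Z;\overline{\R}_+))$, then take $A := \opm(p)$ with $p(x,\sigma) = a(x,\sigma,x)$. Asymptotic summation of the Green residuals $G_k$ is not required, because
\[
A - \sum_{k<N} A_k \;=\; \opm\!\left(\left[a - \sum_{k<N} a_k\right]\!(x,\sigma,x)\right) - \sum_{k<N} G_k,
\]
so once $a - \sum_{k<N} a_k \in F_{\mu_N}$ is achieved, the Mellin term is in $\Psi^{\mu_N;\vec\ell}(\R_+\times Z)$ by definition, and the Green term is a finite sum in $\Psi^{-\infty}_G \subset \Psi^{\mu_N;\vec\ell}(\R_+\times Z)$.

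To build $a$ I would follow the template in the proof of Lemma~\ref{AnalyticSmoothingApprox}. For each $k \geq 1$ and $t \geq 1$, after localizing $a_k$ in coordinate charts on $Z$, form the truncation $b_k^{(t)} := \chi\bigl(\zeta/t^{\ell_1},\sigma/t^{\ell_2},\tau/t^{\ell_3}\bigr)\,a_k$ at the level of local symbols (with $\sigma$ on the real line), using the anisotropic excision function $\chi$ from that proof. The difference $a_k - b_k^{(t)}$ has compact $(\zeta,\sigma,\tau)$-support, hence is smoothing, while $b_k^{(t)} \to 0$ as $t \to \infty$ in every real-line seminorm of order $\mu' > \mu_k$. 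Applying the kernel cut-off operator then produces $c_k^{(t)} := H(\phi)\,b_k^{(t)} \in F_{\mu_k}$; by continuity of $H(\phi)$ one has $c_k^{(t)} \to 0$ in $F_{\mu'}$ for every $\mu' > \mu_k$, and the difference $a_k - c_k^{(t)}$, being holomorphic in $\sigma$ and smoothing on the real line, lies in $F_{-\infty}$.

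Next I enumerate a countable family $\{q_n\}_{n \in \N}$ of defining seminorms for the whole tower $F_{\mu_0} \supset F_{\mu_1} \supset \cdots$, each $q_n$ being attached to a specific $F_{\mu_{N(n)}}$. For each $k \geq 1$ I choose $t_k$ so large that $q_n(c_k^{(t_k)}) \leq 2^{-k}$ for every $n \leq k$ with $N(n) < k$, which is exactly the range in which the convergence $c_k^{(t)} \to 0$ is available. Setting $a := a_0 + \sum_{k \geq 1} c_k^{(t_k)}$, a diagonal argument shows that the tail $\sum_{k \geq N_0+1} c_k^{(t_k)}$ converges absolutely in $F_{\mu_{N_0}}$ for every $N_0 \geq 0$, so $a \in F_{\mu_0}$. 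For $N \geq 1$ the decomposition
\[
a - \sum_{k<N} a_k \;=\; \sum_{1\leq k<N}\bigl(c_k^{(t_k)} - a_k\bigr) \;+\; \sum_{k \geq N} c_k^{(t_k)}
\]
then exhibits $a - \sum_{k<N} a_k$ as the sum of a finite collection of $F_{-\infty}$ elements plus a series that converges in $F_{\mu_N}$ (using that $c_N^{(t_N)} \in F_{\mu_N}$ by order, together with the smallness estimates on the remaining terms), giving the desired asymptotic expansion.

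The main obstacle is preserving holomorphicity in $\sigma$ throughout the truncation step: multiplying by $\chi(\cdot/t)$ destroys it, and reinstating it is precisely what the kernel cut-off operator $H(\phi)$ is designed for. The remaining work is the classical Borel/Mittag-Leffler construction in a countable inverse tower of Fr\'echet spaces, together with the observation that the Green residuals $G_k$ can be carried along unchanged since only finitely many of them appear in any given partial sum.
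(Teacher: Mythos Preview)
Your proposal is correct and shares the same underlying idea as the paper's proof, namely that holomorphicity can be discarded for the Borel step and then restored via the kernel cut-off operator $H(\phi)$. The difference is in the order of operations. You apply truncation and $H(\phi)$ to \emph{each} $a_k$ separately, producing holomorphic approximants $c_k^{(t_k)}$ which you then sum by a hand-built Borel argument in the tower $F_{\mu_0}\supset F_{\mu_1}\supset\cdots$. The paper instead first performs the standard Borel summation of the $a_k$ in the \emph{non-holomorphic} class $C^\infty_B(\R_+,L^{\mu_k;\vec\ell}(Z;\R\times\overline{\R}_+))$ to obtain a single symbol $b$, and then applies $H(\phi)$ \emph{once} to $b$; the key observation is that $a=H(\phi)b$ satisfies $a-\sum_{k<N}a_k\in C^\infty_B(\R_+,M_O^{\mu_0;\vec\ell})\cap C^\infty_B(\R_+,L^{\mu_N;\vec\ell})=C^\infty_B(\R_+,M_O^{\mu_N;\vec\ell})$, using that the $a_k$ are already holomorphic and that $b-H(\phi)b$ is smoothing. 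The paper's route is shorter because it offloads the Borel step to the standard (non-holomorphic) theory and needs no explicit seminorm bookkeeping; your route is more self-contained and makes the convergence mechanism visible, at the cost of repeating the Borel construction inside the holomorphic class. Both observe, as you do, that the $G_k$ need no summation since only finitely many appear in each remainder.
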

\begin{proof}
We need to show the existence of an operator-valued symbol $a(x,\sigma,\tau)$ with the prescribed asymptotic expansion. To do so, we first forget about the analyticity with respect to $\sigma \in \C$ and consider
$$
a_k(x,\sigma,\tau) \in C^{\infty}_B(\R_+,L^{\mu_k;\vec{\ell}}(Z;\R\times\overline{\R}_+)).
$$
The usual Borel argument in coordinates then shows that there exists $b(x,\sigma,\tau) \in C^{\infty}_B(\R_+,L^{\mu_0;\vec{\ell}}(Z;\R\times\overline{\R}_+))$ such that
$$
b(x,\sigma,\tau) - \sum\limits_{k=0}^{N-1}a_k(x,\sigma,\tau) \in C^{\infty}_B(\R_+,L^{\mu_N;\vec{\ell}}(Z;\R\times\overline{\R}_+))
$$
for every $N \in \N_0$. Now let
$$
a(x,\sigma,\tau) = [H(\phi)b](x,\sigma,\tau) \in C^{\infty}_B(\R_+,M^{\mu_0;\vec{\ell}}_O(Z;\overline{\R}_+))
$$
with the kernel cut-off operator $H(\phi)$ from \eqref{HphiSpaces}. Then
$$
b(x,\sigma,\tau) - [H(\phi)b](x,\sigma,\tau) \in C^{\infty}_B(\R_+,L^{-\infty}(Z;\R\times\overline{\R}_+)),
$$
and thus
\begin{align*}
a(x,\sigma,\tau) &- \sum\limits_{k=0}^{N-1}a_k(x,\sigma,\tau) \\
&\in C^{\infty}_B(\R_+,M^{\mu_0;\vec{\ell}}_O(Z;\overline{\R}_+))\cap C^{\infty}_B(\R_+,L^{\mu_N;\vec{\ell}}(Z;\R\times\overline{\R}_+)) \\
&= C^{\infty}_B(\R_+,M^{\mu_N;\vec{\ell}}_O(Z;\overline{\R}_+)).
\end{align*}
\end{proof}

\begin{proposition}
For every $A \in \Psi^{\mu;\vec{\ell}}_{\textup{(cl)}}(\R_+\times Z)$ and every $\beta \in \R$ we have
$x^{-\beta}Ax^{\beta} \in \Psi^{\mu;\vec{\ell}}_{\textup{(cl)}}(\R_+\times Z)$, and
$$
x^{-\beta}Ax^{\beta} - A \in \Psi^{\mu-\ell_2;\vec{\ell}}_{\textup{(cl)}}(\R_+\times Z).
$$
In particular, $\sym_e(A) = \sym_e(x^{-\beta}Ax^{\beta})$ in the classical case.
\end{proposition}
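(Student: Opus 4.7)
Write $A = \opm(p) + G$ with $p(x,\sigma) = a(x,\sigma,x)$ for some $a \in C_B^{\infty}(\R_+,M_{O,\textup{(cl)}}^{\mu;\vec{\ell}}(Z;\overline{\R}_+))$ and $G \in \Psi^{-\infty}_G(\R_+\times Z)$. The conjugation splits as
$$
x^{-\beta}Ax^{\beta} = x^{-\beta}\opm(p)x^{\beta} + x^{-\beta}Gx^{\beta},
$$
and the residual summand is already handled by Lemma~\ref{ResGreen}(a), which gives $x^{-\beta}Gx^{\beta} \in \Psi^{-\infty}_G(\R_+\times Z) \subset \Psi^{\nu;\vec{\ell}}(\R_+\times Z)$ for every order $\nu$. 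So it remains to analyze the Mellin part.

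Apply the conjugation formula \eqref{conjugate}: $x^{-\beta}\opm(p)x^{\beta} = \opm(p_{\beta})$ with $p_{\beta}(x,\sigma) = p(x,\sigma - i\beta) = a(x,\sigma-i\beta,x)$. Setting $\tilde a(x,\sigma,\tau) := a(x,\sigma-i\beta,\tau)$, observe that the class $C_B^{\infty}(\R_+,M_{O,\textup{(cl)}}^{\mu;\vec{\ell}}(Z;\overline{\R}_+))$ is invariant under the translation $\sigma \mapsto \sigma - i\beta$: holomorphicity is preserved, and the local uniform boundedness of restrictions to horizontal lines in $L^{\mu;\vec{\ell}}_{\textup{(cl)}}(Z;\R\times\overline{\R}_+)$ merely shifts the parameter $\gamma$ of the line, which is again locally uniform. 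Hence $\tilde a$ lies in the same class, so $\opm(p_\beta) \in \Psi^{\mu;\vec{\ell}}_{\textup{(cl)}}(\R_+\times Z)$, establishing the first assertion.

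For the second, expand by Taylor's formula in $\sigma$,
$$
\tilde a(x,\sigma,\tau) - a(x,\sigma,\tau) = -i\beta \int_0^1 (\partial_\sigma a)(x,\sigma - it\beta,\tau)\,dt.
$$
Since $\partial_\sigma a \in C_B^{\infty}(\R_+,M_{O,\textup{(cl)}}^{\mu-\ell_2;\vec{\ell}}(Z;\overline{\R}_+))$ (one $\sigma$-derivative reduces the order by $\ell_2$, by the symbol estimate \eqref{symbolestimate}), and since the integral of such a holomorphic family over the compact parameter range $t \in [0,1]$ remains in the same class (holomorphicity in $\sigma$ is preserved; and on any line $\Im(\sigma)=\gamma$ the integrand is $(\partial_\sigma a)|_{\Im(\sigma)=\gamma - t\beta}$, which is bounded in $L^{\mu-\ell_2;\vec{\ell}}(Z;\R\times\overline{\R}_+)$ locally uniformly in $\gamma$ and uniformly in $t \in [0,1]$), we conclude $\tilde a - a \in C_B^{\infty}(\R_+,M_{O,\textup{(cl)}}^{\mu-\ell_2;\vec{\ell}}(Z;\overline{\R}_+))$. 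Therefore $\opm(p_\beta) - \opm(p) \in \Psi^{\mu-\ell_2;\vec{\ell}}_{\textup{(cl)}}(\R_+\times Z)$, and combined with the residual term this yields $x^{-\beta}Ax^{\beta} - A \in \Psi^{\mu-\ell_2;\vec{\ell}}_{\textup{(cl)}}(\R_+\times Z)$. Since $\ell_2 \geq 1$, the difference is of strictly lower order, so the extended principal symbols agree in the classical case, by the remark preceding this proposition that $\sym_e$ determines $A$ modulo $\Psi^{\mu-1;\vec{\ell}}_{\textup{cl}}$.

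\textbf{Main obstacle.} The only delicate point is verifying that the Taylor remainder lives in the \emph{holomorphic} operator-valued symbol class $M_O^{\mu-\ell_2;\vec{\ell}}(Z;\overline{\R}_+)$ rather than merely in a parameter-dependent class on individual horizontal lines. This is handled by the integral representation above together with the uniformity of the symbol bounds under horizontal shifts of $\sigma$; the analyticity of the remainder is automatic because $\tilde a - a$ is itself entire in $\sigma$. Preservation of the classical (polyhomogeneous) structure under the shift follows from the same Taylor argument applied to arbitrary order $N$, combined with asymptotic completeness (Proposition~\ref{AsymptoticComplete}).
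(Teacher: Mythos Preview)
Your proof is correct and follows essentially the same approach as the paper: split $A$ into its Mellin and residual parts, handle the residual part via Lemma~\ref{ResGreen}(a), apply the conjugation identity \eqref{conjugate} to get $p_\beta(x,\sigma)=a(x,\sigma-i\beta,x)$, and then Taylor-expand in $\sigma$. The only cosmetic difference is that the paper records the full asymptotic expansion $a(x,\sigma-i\beta,\tau)\sim\sum_j\frac{(-i\beta)^j}{j!}\partial_\sigma^j a$ whereas you use the first-order Taylor formula with integral remainder, which is exactly what is needed for the order-$\ell_2$ drop.
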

\begin{proof}
For the residual class this is proved in Lemma~\ref{ResGreen}, while by \eqref{conjugate} we have $x^{-\beta}\opm(p)x^{\beta} = \opm(p_{\beta})$ with
$$
p_{\beta}(x,\sigma) = p(x,\sigma-i\beta) = a(x,\sigma-i\beta,x).
$$
Here $a(x,\sigma,\tau)$ is an operator-valued symbol as stated in \eqref{asymbol}, and we have
$$
a(x,\sigma-i\beta,\tau) \sim \sum\limits_{j=0}^{\infty}\frac{(-i\beta)^j}{j!}\partial_{\sigma}^ja(x,\sigma,\tau)
$$
in that symbol class, which implies the claim.
\end{proof}

\begin{lemma}\label{Compatibility}
Let $\ell_1 = \ell_2$. Then
$$
\Psi^{\mu;\vec{\ell}}_{\textup{(cl)}}(\R_+\times Z) \subset L^{\mu;\ell_1}_{\textup{(cl)}}(\R_+\times Z).
$$
In the classical case, the homogeneous principal symbol $\sym_{\psi}(A)$ of $A$ as an element of $L^{\mu;\ell_1}_{\textup{cl}}(\R_+\times Z)$, and the $b$-principal symbol ${}^b\sym_{\psi}(A)$ defined in \eqref{ApsiSym}, are related by
$$
\sym_{\psi}(A)(z,x,\zeta,\xi) = {}^b\sym_{\psi}(A)(z,x,\zeta,x\xi).
$$
\end{lemma}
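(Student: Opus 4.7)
The plan is to reduce the claim to a change of variables. The diffeomorphism $\Phi\colon\R\to\R_+$, $\Phi(t)=e^t$, converts the Mellin transform in $x$ into the Fourier transform in $t$, so it should convert $\opm(p)$ into a standard Fourier pseudodifferential operator on $\R\times Z$. First I would dispose of the residual summand: every $G\in\Psi^{-\infty}_G(\R_+\times Z)$ is, in any local chart on $\R_+\times Z$, an integral operator with a smooth kernel (as follows from its defining mapping properties together with Lemma~\ref{ResGreen}), hence $G\in L^{-\infty}\subset L^{\mu;\ell_1}_{\textup{cl}}$. It therefore suffices to prove the claim for $A=\opm(p)$.

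Substituting $x=e^t$, $x'=e^{t'}$ in the defining oscillatory integral for $\opm(p)$, using $(x/x')^{i\sigma}=e^{i\sigma(t-t')}$ and $dx'/x'=dt'$, identifies $\opm(p)$ after the change of coordinates $\Phi$ with the standard Fourier pseudodifferential operator on $\R\times Z$ having operator-valued symbol $\tilde p(t,\sigma)=p(e^t,\sigma)=a(e^t,\sigma,e^t)$. Locally on $Z$ the corresponding scalar symbol $\tilde a(t,z,\zeta,\sigma)=a(e^t,z,\zeta;\sigma,e^t)$ has $t$-derivatives given by $(xD_x+\tau D_\tau)^ja$ evaluated at $x=\tau=e^t$, which by the observation following \eqref{asymbol} remain in the same symbol class with $C_B^\infty$-control in $x$. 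Combined with the symbol estimates \eqref{symbolestimate} for $a$, this yields
\[
\bigl|D_t^jD_z^\beta\partial_\zeta^\gamma\partial_\sigma^\delta\tilde a(t,z,\zeta,\sigma)\bigr|\;\lesssim\;\langle(\zeta,\sigma,e^t)\rangle_{\vec{\ell}}^{\,\mu-\ell_1|\gamma|-\ell_2|\delta|}
\]
locally uniformly in $t$. The hypothesis $\ell_1=\ell_2$ is precisely what ensures that, on bounded $t$-intervals (where $e^t$ is bounded), the right-hand side is equivalent to $\langle(\zeta,\sigma)\rangle_{(\ell_1,\ell_1)}^{\mu-\ell_1(|\gamma|+|\delta|)}$, which is the standard anisotropic step-$1/\ell_1$ symbol estimate. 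Invariance of $L^{\mu;\ell_1}_{\textup{(cl)}}$ under the diffeomorphism $\Phi$ then gives the inclusion.

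For the classical case and the principal symbol formula I would invoke the Taylor expansion $a(x,\sigma,\tau)\sim\sum_j\tau^ja_j(x,\sigma)$ from \eqref{pxstay}: substituting $\tau=e^t$ just multiplies the $j$-th term by $e^{jt}$, so $\tilde p$ is classical of order $\mu$ in the standard $(\ell_1,\ell_1)$-anisotropic sense with leading term $a_0(e^t,\sigma)$, whose parameter-dependent principal symbol equals $\sym(a)(z,e^t,\zeta,\sigma,0)={}^b\sym_\psi(A)(z,e^t,\zeta,\sigma)$ by \eqref{prsymres}. The principal symbol is covariant under $\Phi$: the cotangent pullback sends $\xi\,dx$ at $x$ to $x\xi\,dt$ at $t=\log x$, so the Fourier variable $\sigma$ dual to $t$ corresponds to $x\xi$. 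This yields $\sym_\psi(A)(z,x,\zeta,\xi)={}^b\sym_\psi(A)(z,x,\zeta,x\xi)$, as stated.

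The main obstacle is bookkeeping: one has to verify that the equivalence of the two anisotropic weights (which requires both $\ell_1=\ell_2$ and the local boundedness of the $\tau$-slot) interacts correctly with all multi-indices, and that the $x$-dependence encoded in $C_B^\infty(\R_+,\cdot)$ translates to the locally uniform in $t$ control needed for $\tilde a$. If $\ell_1\neq\ell_2$ the two weights are genuinely inequivalent, which is why the reduction to the standard step-$1/\ell_1$ class is special to $\ell_1=\ell_2$.
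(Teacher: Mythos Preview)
Your argument is correct and is a genuinely different route from the paper's proof. The paper does not pass through the exponential change of variables $x=e^t$; instead it invokes the Mellin quantization maps $Q$ and $\tilde Q$ of Gil--Schulze--Seiler, which are explicit oscillatory integral operators on $\R_+$ relating the Mellin and Kohn--Nirenberg quantizations directly: one obtains $\opm(p)=\textup{op}(a(x,x\xi))+R$ with $R$ smoothing and with full asymptotic expansions connecting $p$ and $a$ (each involving only $\sigma$- or $\xi$-derivatives and universal combinatorial coefficients). Your approach, by contrast, reduces everything to the elementary identity $\text{Mellin}=\text{Fourier}\circ\Phi^*$ with $\Phi(t)=e^t$, verifies the step-$1/\ell_1$ symbol estimates for the pulled-back symbol on $\R\times Z$, and then uses diffeomorphism invariance of $L^{\mu;\ell_1}_{\textup{(cl)}}$ together with the standard covariance of the principal symbol under cotangent lifts. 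Your route is more self-contained and requires no external machinery; the paper's route, while heavier, yields the full asymptotic relation between the Mellin and Fourier symbols (not just the principal level), which is the kind of information one needs elsewhere in Schulze's singular calculus. For the purposes of this lemma both are adequate, and your identification of $\ell_1=\ell_2$ as exactly the condition making $\langle(\zeta,\sigma,e^t)\rangle_{\vec\ell}$ and $\langle(\zeta,\sigma)\rangle^{1/\ell_1}$ locally comparable is precisely the point.
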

\begin{proof}
This is a consequence of the equivalence of phase functions based on either the Fourier or Mellin transform for the pseudodifferential quantization map. An explicit relationship is given by what is called Mellin quantization in Schulze's theory \cite{SchuNH}. Gil, Schulze, and Seiler \cite{GilSchulzeSeiler0,GilSchulzeSeiler} construct Mellin quantization maps
$$
Q : a(x,\xi) \mapsto p(x,\sigma) \qquad \tilde{Q} : p(x,\sigma) \mapsto a(x,\xi)
$$
on the level of (operator-valued) symbols that are explicitly given by oscillatory integral representations such that
$$
\opm(p) = \textup{op}(a(x,x\xi)) + R.
$$
The remainder term $R$ is smoothing and is also given explicitly. Here $\textup{op}$ refers to the usual Kohn-Nirenberg quantization.
This formula is to be read as either $p = Q(a)$ and then also $R = R(a)$ when passing from the Kohn-Nirenberg quantized operator to the Mellin operator, or as $a = \tilde{Q}(p)$ and $R = R(p)$ vice versa. The maps $Q$ and $\tilde{Q}$ are inverses of each other modulo regularizing (operator-valued) symbols, and there are explicit asymptotic expansions for $p = Q(a)$ in terms of $a$, and for $a = \tilde{Q}(p)$ in terms of $p$, which only involve $\xi$- or $\sigma$-derivatives of $a$ or $p$, respectively, powers of $\xi$ or $\sigma$, and certain universal combinatorial coefficients; in particular, according to these expansions, $p$ and $a$ differ by lower-order terms. The construction and proofs in an anisotropic setting can also be found in \cite{KrainerMellin}.
\end{proof}

\subsection{Compositions and formal adjoints}

\begin{lemma}\label{ImprovedMappingProps}
Let $p(x,\sigma) = a(x,\sigma,x)$ with $a(x,\sigma,\tau) \in C^{\infty}_B(\R_+,M^{\mu;\vec{\ell}}_O(Z;\overline{\R}_+))$ be arbitrary, and let $\mu \leq 0$. For $j \in \N_0$ with $\mu+\ell_3j \leq 0$ we have
$$
\opm(p),\; [\opm(p)]^* : x^{\alpha}H^{s;(\ell_1,\ell_2)}_b(\R_+\times Z) \to x^{\alpha-j}H^{s-(\mu+\ell_3 j);(\ell_1,\ell_2)}_b(\R_+\times Z)
$$
for all $s,\alpha \in \R$.
\end{lemma}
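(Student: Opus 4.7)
The plan is to trade powers of $x$ for smoothness by exploiting that inside the anisotropic holomorphic symbol class multiplication by the covariable $\tau$ raises the order by $\ell_3$. The symbol-level identity $x^j\opm(p) = \opm(x^j p)$ is immediate from the Mellin quantization formula, and since $x^j p(x,\sigma) = \tilde{a}(x,\sigma,x)$ with $\tilde{a}(x,\sigma,\tau) := \tau^j a(x,\sigma,\tau)$, and since $\tau \in M^{\ell_3;\vec{\ell}}_{O,\textup{cl}}(Z;\overline{\R}_{+})$ by item~(1) of the definition of the calculus, we have $\tilde{a} \in C^{\infty}_B(\R_+, M^{\mu+\ell_3 j;\vec{\ell}}_O(Z;\overline{\R}_+))$. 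The hypothesis $\mu+\ell_3 j \le 0$ puts this new order in the nonpositive range, so item~(2) of the calculus definition yields $\tilde p(x,\sigma) := x^j p(x,\sigma) = \tilde a(x,\sigma,x) \in C^{\infty}_B(\R_+,M^{\mu+\ell_3 j;(\ell_1,\ell_2)}_O(Z))$. In other words, the operator $x^j\opm(p)$ lands in the standard anisotropic Mellin calculus on $\R_+\times Z$ reviewed in Section~\ref{sec-MellinCalcReview}.

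With that reduction in place, I would invoke the continuity of anisotropic Mellin operators with holomorphic symbols in the weighted anisotropic $b$-Sobolev scale (stated at the end of Section~\ref{sec-MellinCalcReview}) to conclude
$$
x^j\opm(p) = \opm(\tilde p) : x^{\alpha}H^{s;(\ell_1,\ell_2)}_b(\R_+\times Z) \to x^{\alpha}H^{s-(\mu+\ell_3 j);(\ell_1,\ell_2)}_b(\R_+\times Z)
$$
for all $s,\alpha \in \R$, and then compose with the isomorphism $x^{-j}: x^\alpha H^{s;(\ell_1,\ell_2)}_b \to x^{\alpha-j}H^{s;(\ell_1,\ell_2)}_b$ to read off the claimed mapping property for $\opm(p)$.

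For the formal adjoint I would take adjoints in $x^j\opm(p) = \opm(\tilde p)$, using that multiplication by $x^j$ is self-adjoint on $L^2_b$, to obtain $[\opm(p)]^* x^j = [\opm(\tilde p)]^*$. The anisotropic analog of the adjoint formula \eqref{MelAdjoint} then gives $[\opm(\tilde p)]^* = \opm(\tilde p^{\star})$ with $\tilde p^{\star} \in C^{\infty}_B(\R_+,M^{\mu+\ell_3 j;(\ell_1,\ell_2)}_O(Z))$ of the same (nonpositive) order, so the continuity statement above produces $[\opm(p)]^* x^j : x^{\alpha}H^{s;(\ell_1,\ell_2)}_b \to x^{\alpha}H^{s-(\mu+\ell_3 j);(\ell_1,\ell_2)}_b$. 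Shifting $\alpha \mapsto \alpha - j$ and again absorbing $x^{\pm j}$ via the weight isomorphism finishes the proof for the adjoint.

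The only genuine bookkeeping is to confirm that multiplication by $\tau$ preserves the holomorphic structure in $\sigma$ (it does trivially, since $\tau$ is $\sigma$-independent) and to invoke the anisotropic versions of the Mellin composition/adjoint formulas and anisotropic $b$-Sobolev continuity reviewed in Section~\ref{sec-MellinCalcReview}. I do not expect any genuine obstacle beyond this bookkeeping; the essential content is the order-raising identity $x^j \opm(p) = \opm(\tilde p)$ together with $\tilde a = \tau^j a \in M^{\mu+\ell_3 j;\vec\ell}_O(Z;\overline{\R}_+)$.
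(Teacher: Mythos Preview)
Your proof is correct and follows essentially the same strategy as the paper: reduce to the standard anisotropic Mellin calculus by observing that $x^j p(x,\sigma) = \tau^j a(x,\sigma,\tau)\big|_{\tau=x}$ lies in $C^{\infty}_B(\R_+,M_O^{\mu+\ell_3 j;(\ell_1,\ell_2)}(Z))$ (nonpositive order), then invoke continuity and closure under adjoints in that calculus. The only cosmetic difference is that for the adjoint the paper multiplies by $x^j$ on the left, writing $x^j[\opm(p)]^* = [\opm(p)x^j]^* = [x^j\opm(p_j)]^*$ via the conjugation identity \eqref{conjugate} with $p_j(x,\sigma)=p(x,\sigma-ij)$, whereas you multiply on the right via $[\opm(p)]^*x^j = [x^j\opm(p)]^* = [\opm(\tilde p)]^*$; both routes land in the same place.
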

\begin{proof}
By assumption on $\mu$ and $j$, we have
$$
x^jp(x,\sigma) \in C^{\infty}_B(\R_+,M_O^{\mu+\ell_3 j;(\ell_1,\ell_2)}(Z)).
$$
and consequently $\opm(p)$ has the desired mapping properties. Now
$$
x^j[\opm(p)]^* = [\opm(p)x^j]^* = [x^j\opm(p_{j})]^*
$$
with $p_j(x,\sigma) = p(x,\sigma-ij) = a(x,\sigma-ij,x)$, and we have
$$
a(x,\sigma-ij,\tau) \in C^{\infty}_B(\R_+,M^{\mu;\vec{\ell}}_O(Z;\overline{\R}_+)).
$$
Consequently,
$$
x^jp_j(x,\sigma) \in C^{\infty}_B(\R_+,M_O^{\mu+\ell_3 j;(\ell_1,\ell_2)}(Z)),
$$
and thus the same is true for the operator-valued symbol of the formal adjoint, which implies that
$$
x^j[\opm(p)]^*= [x^j\opm(p_j)]^* : x^{\alpha}H^{s;(\ell_1,\ell_2)}_b(\R_+\times Z) \to x^{\alpha}H^{s-(\mu+\ell_3 j);(\ell_1,\ell_2)}_b(\R_+\times Z).
$$
\end{proof}

\begin{theorem}\label{AdjointsCalculus}
Let $A = \opm(p) + G \in \Psi^{\mu;\vec{\ell}}_{\textup{(cl)}}(\R_+\times Z)$. Then the formal adjoint of $A$ with respect to the $L^2_b$-inner product belongs to $\Psi^{\mu;\vec{\ell}}_{\textup{(cl)}}(\R_+\times Z)$. More precisely, suppose that $p(x,\sigma) = a(x,\sigma,x)$ with $a(x,\sigma,\tau) \in C^{\infty}_B(\R_+,M^{\mu;\vec{\ell}}_O(Z;\overline{\R}_+))$. Let then $a^{\star}(x,\sigma,\tau)  \in C^{\infty}_B(\R_+,M^{\mu;\vec{\ell}}_O(Z;\overline{\R}_+))$ be arbitrary with asymptotic expansion
\begin{equation}\label{AdjointAsymptotics}
a^{\star}(x,\sigma,\tau) \sim \sum\limits_{k=0}^{\infty}\frac{1}{k!}[(-xD_x - \tau D_{\tau})^k\partial_{\sigma}^k a](x,\overline{\sigma},\tau)^*
\end{equation}
within this operator-valued symbol class, where the $*$ operator that appears in the asymptotic terms in \eqref{AdjointAsymptotics} is the formal adjoint with respect to the $L^2$-inner product on $Z$. Then there exists $G' \in \Psi^{-\infty}_G(\R_+\times Z)$ such that
$$
A^* = \opm(a^{\star}(x,\sigma,x) ) + G' \in \Psi^{\mu;\vec{\ell}}_{\textup{(cl)}}(\R_+\times Z).
$$
In the classical case we find that $\sym_e(A^*) = \overline{\sym_e(A)}$ (or fiberwise adjoints when the operators are acting in sections of the pull-back to $\R_+\times Z$ of a Hermitian vector bundle on $Z$).
\end{theorem}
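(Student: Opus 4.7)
The plan is to reduce to the standard Mellin adjoint formula \eqref{MelAdjoint}--\eqref{MelAdjointRem}, using the key identity from observation~(2) in Section~\ref{PseudoCalculusExit} that $(xD_x)^k\partial_\sigma^k p(x,\sigma) = [(xD_x + \tau D_\tau)^k\partial_\sigma^k a](x,\sigma,x)$ for $p(x,\sigma) = a(x,\sigma,x)$. Since $\Psi^{-\infty}_G(\R_+\times Z)$ is closed under formal adjoints by its very definition, the $G$-contribution yields $G^* \in \Psi^{-\infty}_G$ and only $\opm(p)^*$ requires analysis. I would first produce one canonical $\tilde a \in C^{\infty}_B(\R_+, M^{\mu;\vec{\ell}}_O(Z;\overline{\R}_+))$ satisfying \eqref{AdjointAsymptotics} for which $\opm(p)^* = \opm(\tilde a(x,\sigma,x))$ holds exactly, and then note that any other $a^\star$ with the same asymptotic expansion differs from $\tilde a$ by an element of $C^{\infty}_B(\R_+, M^{-\infty}_O(Z;\overline{\R}_+))$; by the Lemma preceding Proposition~\ref{AsymptoticComplete}, the corresponding operators differ by an element of $\Psi^{-\infty}_G(\R_+\times Z)$, which supplies the asserted $G'$.

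The canonical $\tilde a$ arises from separating the two occurrences of $x$ in $a(xy,\cdot,xy)$ that appear in the Mellin adjoint integral: set
$$
\tilde a(x,\sigma,\tau) := \frac{1}{2\pi}\iint y^{-i\eta}\,a(xy,\overline{\sigma}+\eta,\tau y)^*\,\frac{dy}{y}\,d\eta,
$$
so that formally $\tilde a(x,\sigma,x) = p^\star(x,\sigma)$ is the Mellin adjoint symbol of $p$. Expanding the integrand in a joint Taylor series around $y=1$ and $\eta=0$ and using that $y\partial_y|_{y=1}$ acts on $(xy,\tau y)$-dependence as $xD_x + \tau D_\tau$ yields the expansion \eqref{AdjointAsymptotics}, with Taylor remainders of the same oscillatory-integral shape as \eqref{MelAdjointRem}. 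To make this rigorous I would approximate $a$ by smoothing symbols $a_j\in C^{\infty}_B(\R_+, M^{-\infty}_O(Z;\overline{\R}_+))$ via Lemma~\ref{AnalyticSmoothingApprox}, for which the integral converges absolutely and all manipulations are justified, and then extend by the standard oscillatory integral regularization (integration by parts in $y$ and $\eta$) combined with the kernel cut-off operator of \eqref{HphiSpaces} to preserve holomorphy in $\sigma$.

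Once $\tilde a$ has been constructed in the right symbol class, the operator identity $\opm(p_j)^* = \opm(\tilde a_j(x,\sigma,x))$ for the smoothing approximations is immediate from the standard Mellin calculus applied to $p_j(x,\sigma) = a_j(x,\sigma,x) \in C^{\infty}_B(\R_+, M^{-\infty}_O(Z))$ (the symbols $p_j$ even decay rapidly in $x$ as $x\to\infty$ since $a_j$ is smoothing in $\tau$). Passing to the limit using observation~(3) in Section~\ref{PseudoCalculusExit}, that is, continuity of $(b,u)\mapsto\opm(b(x,\sigma,x))u$ jointly in $b$ and $u$, yields $\opm(p)^* = \opm(\tilde a(x,\sigma,x))$ as operators on $\S_0(\R_+\times Z)$. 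The principal symbol identity $\sym_e(A^*) = \overline{\sym_e(A)}$ in the classical case is the $k=0$ term of \eqref{AdjointAsymptotics}.

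The main obstacle is the extension of the parametric oscillatory integral $a\mapsto \tilde a$ to the anisotropic holomorphic Mellin symbol class with the extra variable $\tau$, while preserving both locally uniform $C^{\infty}_B$ estimates in $x$ and holomorphy in $\sigma$. This is a technical but essentially routine adaptation of the standard Mellin calculus techniques of Section~\ref{sec-MellinCalcReview} together with the kernel cut-off construction used in the proof of Lemma~\ref{AnalyticSmoothingApprox}; the genuinely new feature is the anisotropic order bookkeeping with $\tau$ playing the role of an additional variable on an equal footing with $\sigma$.
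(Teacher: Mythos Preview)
Your overall strategy---reduce to the Mellin adjoint formula, use the identity $(xD_x)^k\partial_\sigma^k p = [(xD_x+\tau D_\tau)^k\partial_\sigma^k a]\big|_{\tau=x}$, and approximate by smoothing symbols via Lemma~\ref{AnalyticSmoothingApprox}---matches the paper's. The difference, and the gap, is in how you close the argument.

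You propose to construct an \emph{exact} decoupled symbol $\tilde a(x,\sigma,\tau)\in C^{\infty}_B(\R_+,M^{\mu;\vec\ell}_O(Z;\overline\R_+))$ with $\tilde a(x,\sigma,x)=p^\star(x,\sigma)$, via the oscillatory integral with integrand $a(xy,\overline\sigma+\eta,\tau y)^*$. The step you label ``routine'' is precisely the crux, and standard regularization does not suffice. The coupling $\tau\mapsto\tau y$ is multiplicative, unlike the additive $\sigma\mapsto\sigma+\eta$: integration by parts in $\eta$ yields only $\langle\log y\rangle^{-M}$ decay, whereas $\langle(\zeta,\sigma+\eta,\tau y)\rangle_{\vec\ell}^{\mu'}$ and $\langle(\zeta,\sigma,\tau)\rangle_{\vec\ell}^{\mu'}$ differ by a factor that is polynomial in $\max(y,y^{-1})$, i.e., exponential in $|\log y|$. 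For large $\tau$ the critical region is $y\sim\tau^{-1}$, and the naive bound on $\tilde a$ is only $(\log\tau)^{-M}$, which does \emph{not} place $\tilde a$ in the anisotropic $\tau$-symbol class. Kernel cut-off repairs holomorphy in $\sigma$, not $\tau$-estimates. One can attempt to rescue this with contour shifts in $\eta$ exploiting holomorphy, but that is exactly the nontrivial analytic input, and your sketch does not carry it out.

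The paper sidesteps this entirely: it never claims any exact decoupled symbol lies in the anisotropic $\tau$-class. For smoothing $a$ it writes $p^\star=p_N+r_N$ via \eqref{MelAdjoint}--\eqref{MelAdjointRem}, and then analyzes $x^jr_N(x,\sigma)$ directly---using the contour shift $x^j=(xy)^jy^{-j}$, $y^{-j}y^{-i\eta}=y^{-i(\eta-ij)}$, so that $(xy)^j$ becomes $\tau^j$ \emph{inside} the symbol $a_N$. This lands $x^jr_N$ in $C^{\infty}_B(\R_+,M^{\mu_0;(\ell_1,\ell_2)}_O(Z))$ for $N$ large, continuously in $a$; note the target class has \emph{no} $\tau$-parameter. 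These estimates give mapping properties of $\opm(r_N)$ and $[\opm(r_N)]^*$ in weighted $b$-Sobolev spaces. After passing to general $a$ by the weak-limit argument you describe, the paper combines Lemma~\ref{ImprovedMappingProps} (for the finite sum $p_N-a^\star|_{\tau=x}$) with the $r_N$-bounds to show that $[\opm(p)]^*-\opm(a^\star(x,\sigma,x))$ satisfies the \emph{defining mapping properties} of $\Psi^{-\infty}_G(\R_+\times Z)$. The anisotropic $\tau$-class is only ever used for the finitely many terms of the expansion, where membership is obvious.
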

\begin{proof}
Observe that an operator-valued symbol $a^{\star}(x,\sigma,\tau)$ with the prescribed asymptotic expansion \eqref{AdjointAsymptotics} exists by Proposition~\ref{AsymptoticComplete}.

We first consider $a(x,\sigma,\tau) \in C^{\infty}_B(\R_+,M^{-\infty}_O(Z;\overline{\R}_+))$. In particular, the formulas \eqref{MelAdjoint} and \eqref{MelAdjointRem} for the formal adjoint from the standard Mellin calculus are applicable to $\opm(p)$. We get that $[\opm(p)]^* = \opm(p^{\star})$ with the symbol $p^{\star}(x,\sigma)$ explicitly given by \eqref{MelAdjoint}, and for every $N \in \N$ we have
\begin{align*}
p^{\star}(x,\sigma) &= \sum\limits_{k=0}^{N-1}\frac{1}{k!}[(-xD_x)^k\partial_{\sigma}^k p](x,\overline{\sigma})^*+ r_N(x,\sigma) \\
&= \sum\limits_{k=0}^{N-1}\frac{1}{k!}[(-xD_x - \tau D_{\tau})^k\partial_{\sigma}^k a](x,\overline{\sigma},x)^* + r_N(x,\sigma) \\
&= p_N(x,\sigma) + r_N(x,\sigma),
\end{align*}
where
$$
r_N(x,\sigma) = \frac{1}{2\pi}\!\int_0^1\frac{(1-\theta)^{N-1}}{(N-1)!}\iint y^{-i\eta}a_N(xy,\overline{\sigma}+\theta\eta,xy)^*\,\frac{dy}{y}d\eta d\theta.
$$
Here
$$
a_N(x,\sigma,\tau) = (-xD_x - \tau D_{\tau})^{N}\partial_{\sigma}^Na(x,\sigma,\tau).
$$
For any $j \in \N_0$ we  now consider $x^jr_N(x,\sigma)$. In view of the analyticity of the symbols we can shift integration to the complex $\eta$-plane in the formula for $r_N(x,\sigma)$ and get
\begin{align*}
x^jr_N(x,\sigma) &= \frac{1}{2\pi}\!\int_0^1\frac{(1-\theta)^{N-1}}{(N-1)!}\iint y^{-i\eta}x^ja_N(xy,\overline{\sigma}+\theta\eta,xy)^*\,\frac{dy}{y}d\eta d\theta \\
&= \frac{1}{2\pi}\!\int_0^1\frac{(1-\theta)^{N-1}}{(N-1)!}\iint y^{-i\eta}(xy)^ja_N(xy,\overline{\sigma}+\theta(\eta-ij),xy)^*\,\frac{dy}{y}d\eta d\theta \\
&= \frac{1}{2\pi}\!\int_0^1\frac{(1-\theta)^{N-1}}{(N-1)!}\iint y^{-i\eta}[\tau^ja_N](xy,\overline{\sigma}+\theta(\eta-ij),xy)^*\,\frac{dy}{y}d\eta d\theta.
\end{align*}
The map $C^{\infty}_B(\R_+,M^{\mu';\vec{\ell}}_O(Z;\overline{\R}_+)) \to C^{\infty}_B(\R_+,M^{\mu'-N\ell_2 + j\ell_3;\vec{\ell}}_O(Z;\overline{\R}_+))$ given by
$$
a(x,\sigma,\tau) \mapsto \tau^ja_N(x,\sigma,\tau)
$$
is continuous for all $\mu' \in \R$ and $j,N \in \N_0$. The oscillatory integral formula for $x^jr_N(x,\sigma)$ implies that, for any given $\mu' \in \R$, $j_0 \in \N_0$, and $\mu_0 < 0$, there exists $N_0 \in \N$ sufficiently large such that, for $N \geq N_0$ and $0 \leq j \leq j_0$, the map
$$
C^{\infty}_B(\R_+,M^{\mu';\vec{\ell}}_O(Z;\overline{\R}_+)) \ni a(x,\sigma,\tau) \mapsto x^jr_N(x,\sigma) \in C^{\infty}_B(\R_+,M^{\mu_0;(\ell_1,\ell_2)}_O(Z))
$$
is continuous. Consequently, for $a(x,\sigma,\tau) \in C^{\infty}_B(\R_+,M^{\mu';\vec{\ell}}_O(Z;\overline{\R}_+))$, we get
$$
\opm(r_N) : x^{\alpha}H^{s;(\ell_1,\ell_2)}_b(\R_+\times Z) \to x^{\alpha-j}H^{s-\mu_0;(\ell_1,\ell_2)}_b(\R_+\times Z)
$$
for all $\alpha,s \in \R$. The same reasoning shows that the map
$$
C^{\infty}_B(\R_+,M^{\mu';\vec{\ell}}_O(Z;\overline{\R}_+)) \ni a(x,\sigma,\tau) \mapsto x^jr_N(x,\sigma-ij) \in C^{\infty}_B(\R_+,M^{\mu_0;(\ell_1,\ell_2)}_O(Z))
$$
is continuous for $N \geq N_0$ and $0 \leq j \leq j_0$, and consequently
\begin{align*}
[\opm(r_N)]^* = x^{-j}&[\opm(x^jr_N(x,\sigma-ij))]^* : \\
&x^{\alpha}H^{s;(\ell_1,\ell_2)}_b(\R_+\times Z) \to x^{\alpha-j}H^{s-\mu_0;(\ell_1,\ell_2)}_b(\R_+\times Z)
\end{align*}
for all $\alpha,s \in \R$ as well.

Now let $a \in C^{\infty}_B(\R_+,M^{\mu;\vec{\ell}}_O(Z;\overline{\R}_+))$. By Lemma~\ref{AnalyticSmoothingApprox} there exists a sequence $a_{\nu} \in C^{\infty}_B(\R_+,M^{-\infty}_O(Z;\overline{\R}_+))$ such that $a_{\nu} \to a$ in $C^{\infty}_B(\R_+,M^{\mu';\vec{\ell}}_O(Z;\overline{\R}_+))$ as $\nu \to \infty$ for $\mu' > \mu$. Let $u,v \in C_c^{\infty}(\R_+\times Z)$ be arbitrary. Then
\begin{gather*}
\langle \opm(a(x,\sigma,x))u,v \rangle_{L^2_b} \longleftarrow \langle \opm(a_{\nu}(x,\sigma,x))u,v \rangle_{L^2_b} \\
= \langle u,[\opm(a_{\nu}(x,\sigma,x))]^*v \rangle_{L^2_b} 
= \langle u, (\opm(p_{N,\nu}) + \opm(r_{N,\nu}))v \rangle_{L^2_b} \\
\longrightarrow \langle u, (\opm(p_{N}) + \opm(r_{N}))v \rangle_{L^2_b}
\end{gather*}
for $N \in \N$ large enough, and $\opm(r_N)$ and $[\opm(r_N)]^*$ have the mapping properties in the anisotropic $b$-Sobolev spaces stated above. In particular, $[\opm(a(x,\sigma,x))]^* = \opm(p_N) + \opm(r_N)$.

Consequently, if $a^{\star}(x,\sigma,\tau)$ has the asymptotic expansion \eqref{AdjointAsymptotics}, then $G_0 = [\opm(a(x,\sigma,x)]^* - \opm(a^{\star}(x,\sigma,x))$ satisfies
$$
G_0,\,G_0^* : x^{\alpha}H^{s;(\ell_1,\ell_2)}_b(\R_+\times Z) \to x^{\alpha'}H^{s';(\ell_1,\ell_2)}_b(\R_+\times Z)
$$
for all $\alpha,s,s' \in \R$, and all $\alpha' \leq \alpha$ by Lemma~\ref{ImprovedMappingProps} and the mapping properties of $\opm(r_N)$ (as $N \to \infty$), which shows that $G_0 \in \Psi^{-\infty}_G(\R_+\times Z)$. The theorem is proved.
\end{proof}

\begin{theorem}\label{CompositionCalculus}
Let $A_j = \opm(a_j(x,\sigma,x)) + G_j \in \Psi^{\mu_j;\vec{\ell}}_{\textup{(cl)}}(\R_+\times Z)$, $j = 1,2$, with
$$
a_j(x,\sigma,\tau) \in C^{\infty}_B(\R_+,M^{\mu_j;\vec{\ell}}_{O,\textup{(cl)}}(Z;\overline{\R}_+)).
$$
Then the composition $A_1 A_2 \in \Psi^{\mu_1+\mu_2;\vec{\ell}}_{\textup{(cl)}}(\R_+\times Z)$. More precisely, if $a_1{\#}a_2 \in C^{\infty}_B(\R_+,M^{\mu_1+\mu_2;\vec{\ell}}_{O,\textup{(cl)}}(Z;\overline{\R}_+))$ has the asymptotic expansion
\begin{equation}\label{LeibnizAsymptotics}
(a_1{\#}a_2)(x,\sigma,\tau) \sim \sum\limits_{k=0}^{\infty}\frac{1}{k!}[\partial_{\sigma}^ka_1](x,\sigma,\tau)[(xD_x + \tau D_{\tau})^ka_2](x,\sigma,\tau),
\end{equation}
then there exists $G \in \Psi^{-\infty}_G(\R_+\times Z)$ such that
$$
A_1 A_2 = \opm((a_1{\#}a_2)(x,\sigma,x)) + G.
$$
In the classical case we have $\sym_e(A_1 A_2) = \sym_e(A_1)\sym_e(A_2)$.
\end{theorem}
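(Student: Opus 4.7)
\emph{Proof plan.} The argument parallels that of Theorem~\ref{AdjointsCalculus}. First I would decompose
$$
A_1A_2 = \opm(p_1)\opm(p_2) + \opm(p_1)G_2 + G_1\opm(p_2) + G_1G_2,
$$
with $p_j(x,\sigma)=a_j(x,\sigma,x)$. The piece $G_1G_2$ lies in $\Psi_G^{-\infty}(\R_+\times Z)$ by direct inspection of mapping properties. For the cross terms, $G_2$ and $G_2^*$ gain arbitrary weight in the $b$-Sobolev scale while $\opm(p_1)$ and its formal adjoint only degrade smoothness and weight by a fixed finite amount (cf.\ point~(2) in the calculus definition and Lemma~\ref{ImprovedMappingProps}); the composition therefore still gains arbitrary weight and belongs to $\Psi_G^{-\infty}(\R_+\times Z)$. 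A symmetric argument handles $G_1\opm(p_2)$, so it suffices to analyze $\opm(p_1)\opm(p_2)$.

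By Lemma~\ref{AnalyticSmoothingApprox} pick sequences $a_j^{(\nu)}\in C^\infty_B(\R_+,M^{-\infty}_O(Z;\overline{\R}_+))$ converging to $a_j$ in $C^\infty_B(\R_+,M^{\mu_j';\vec{\ell}}_O(Z;\overline{\R}_+))$ for every $\mu_j'>\mu_j$. For these smoothing symbols the standard Mellin composition formula \eqref{MelLeibniz}--\eqref{MelLeibnizRem} applies to $\opm(p_1^{(\nu)})\opm(p_2^{(\nu)})$. The key algebraic identity is the chain rule
$$
(xD_x)^k\bigl[a_2^{(\nu)}(x,\sigma,x)\bigr] = \bigl[(xD_x+\tau D_\tau)^k a_2^{(\nu)}\bigr](x,\sigma,x),
$$
exactly as for adjoints, so the explicit partial sum of \eqref{MelLeibniz} coincides with the partial sum of \eqref{LeibnizAsymptotics} evaluated at $\tau=x$.

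The main obstacle is uniform control of the remainder
$$
r_N^{(\nu)}(x,\sigma) = \frac{1}{2\pi}\!\int_0^1\!\frac{(1-\theta)^{N-1}}{(N-1)!}\iint y^{-i\eta}[\partial_\sigma^N p_1^{(\nu)}](x,\sigma+\theta\eta)\,[(xD_x)^Np_2^{(\nu)}](xy,\sigma)\,\frac{dy}{y}\,d\eta\,d\theta.
$$
Following the template of Theorem~\ref{AdjointsCalculus}, I would examine $x^jr_N^{(\nu)}(x,\sigma)$ and $x^jr_N^{(\nu)}(x,\sigma-ij)$. Writing $x^j = (xy)^j\cdot y^{-j}$ and using analyticity of $a_1^{(\nu)}$ in $\sigma$, shift the $\eta$-contour by $-ij$ to absorb $y^{-j}$ into the oscillatory factor; the remaining $(xy)^j$ then acts on $a_2^{(\nu)}(xy,\sigma,xy)$ as a factor $\tau^j$, thereby trading a power of $x$ for a power of $\tau$. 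For any prescribed $\mu_0<0$ and $j_0\in\N_0$, this yields, for all sufficiently large $N$ and all $0\le j\le j_0$, a continuous map $(a_1,a_2)\mapsto x^jr_N$ into $C^\infty_B(\R_+,M^{\mu_0;(\ell_1,\ell_2)}_O(Z))$, and analogously for $x^jr_N(x,\sigma-ij)$. Consequently both $\opm(r_N^{(\nu)})$ and $[\opm(r_N^{(\nu)})]^*$ satisfy the weight-and-smoothness-gaining mapping properties of the residual class, uniformly in $\nu$.

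Let $a_1{\#}a_2\in C^\infty_B(\R_+,M^{\mu_1+\mu_2;\vec{\ell}}_{O,\textup{(cl)}}(Z;\overline{\R}_+))$ be any symbol with the expansion \eqref{LeibnizAsymptotics}, whose existence is guaranteed by Proposition~\ref{AsymptoticComplete}, and let $p_N(x,\sigma)$ denote the $N$-term partial sum of \eqref{LeibnizAsymptotics} evaluated at $\tau=x$. Passing to the limit $\nu\to\infty$ in
$$
\bigl\langle\opm(p_1^{(\nu)})\opm(p_2^{(\nu)})u,v\bigr\rangle_{L^2_b} = \bigl\langle u,\bigl[\opm(p_N^{(\nu)})+\opm(r_N^{(\nu)})\bigr]^*v\bigr\rangle_{L^2_b},\quad u,v\in C_c^\infty(\R_+\times Z),
$$
using the continuity noted in point~(3) of the calculus definition, one identifies $\opm(p_1)\opm(p_2)$ as $\opm((a_1{\#}a_2)(x,\sigma,x))$ plus an operator $G\in\Psi_G^{-\infty}(\R_+\times Z)$, since the discrepancy from any finite partial sum plus the corresponding remainder has residual mapping properties by the previous paragraph. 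The classical principal-symbol identity $\sym_e(A_1A_2)=\sym_e(A_1)\sym_e(A_2)$ is then read off the leading $k=0$ term of \eqref{LeibnizAsymptotics}, combined with multiplicativity of the parameter-dependent principal symbol in $M^{\ast;\vec{\ell}}_{O,\textup{cl}}(Z;\overline{\R}_+)$.
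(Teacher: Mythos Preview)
Your overall architecture is right and matches the paper: reduce to $\opm(p_1)\opm(p_2)$, approximate by smoothing symbols via Lemma~\ref{AnalyticSmoothingApprox}, use the chain-rule identity $(xD_x)^k[a_2(x,\sigma,x)]=[(xD_x+\tau D_\tau)^ka_2](x,\sigma,x)$, and control the Taylor remainder $r_N$ uniformly. The cross-term discussion is acceptable in outline (the paper does it by a cutoff splitting $a_j=a_{j,0}+a_{j,\infty}$ and invokes Theorem~\ref{AdjointsCalculus} for the adjoints, which you use implicitly).

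The gap is in your remainder estimate. In the composition formula the two factors sit at different base points: $a_1$ is evaluated at $(x,\sigma+\theta\eta,x)$ while $a_2$ is evaluated at $(xy,\sigma,xy)$. Your manipulation $x^j=(xy)^j y^{-j}$ places $\tau^j$ on $a_2$, which \emph{raises} its order to $\mu_2+j\ell_3$ and hence worsens the growth of $[\tau^j(\cdots)a_2](xy,\sigma,xy)$ in $xy$. The Mellin oscillatory-integral regularization only gains inverse powers of $\langle\log y\rangle$ and $\langle\eta\rangle$; it cannot absorb polynomial growth in $y$. Consequently the asserted continuity of $(a_1,a_2)\mapsto x^jr_N$ into $C^\infty_B(\R_+,M^{\mu_0;(\ell_1,\ell_2)}_O(Z))$ fails as soon as $\mu_2+j\ell_3>0$, and your approximation argument does not pass to the limit uniformly in $\nu$.

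What the paper does instead is to split $a_2=\omega a_2+(1-\omega)a_2=a_{2,0}+a_{2,\infty}$ with $\omega\in C_c^\infty(\overline{\R}_+)$. For $r_{N,0}$ the factor $a_{2,0}(xy,\sigma,xy)$ is bounded (it is supported where $xy$ is small), and one simply writes $x^j=\tau^j$ on $a_1$ with no contour shift. For $r_{N,\infty}$ one first chooses $K$ large so that $x^{-K}a_{2,\infty}(x,\sigma,x)\in C^\infty_B(\R_+,M^{\mu_2';(\ell_1,\ell_2)}_O(Z))$, and then writes $x^j=x^{j+K}\cdot(xy)^{-K}\cdot y^{K}$: the factor $x^{j+K}$ becomes $\tau^{j+K}$ on $a_1$, the factor $(xy)^{-K}$ tames $a_{2,\infty}$, and the leftover $y^{K}$ is absorbed by a contour shift of $\eta$ by $iK$ (using analyticity of $a_1$). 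The point is that the $\tau$-trick must land on $a_1$ (whose third slot is $x$), while $a_2$ needs a \emph{negative} power of $xy$ to be made bounded; your version does the opposite.
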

\begin{proof}
The composition $G_1G_2 \in \Psi^{-\infty}_G(\R_+\times Z)$ in view of the defining mapping properties of the residual class. We next prove that the compositions
$$
\opm(a_1(x,\sigma,x))G_2 \textup{ and } G_1\opm(a_2(x,\sigma,x))
$$
are residual as well. Let $\omega \in C_c^{\infty}(\overline{\R}_+)$ with $\omega \equiv 1$ near $x=0$, and write
$$
a_j(x,\sigma,\tau) = \omega(x) a_j(x,\sigma,\tau) + (1-\omega)(x) a_j(x,\sigma,\tau) = a_{j,0}(x,\sigma,\tau) + a_{j,\infty}(x,\sigma,\tau).
$$
For $K > 0$ large enough we have
$$
a_{j,0}(x,\sigma,x),\, x^{-K}a_{j,\infty}(x,\sigma,x) \in C^{\infty}_B(\R_+,M^{\mu_j;(\ell_1,\ell_2)}_O(Z)),
$$
and consequently
\begin{gather*}
\opm(a_{j,0}(x,\sigma,x)) : x^{\alpha}H^{s;(\ell_1,\ell_2)}_b(\R_+\times Z) \to x^{\alpha}H^{s-\mu_j;(\ell_1,\ell_2)}_b(\R_+\times Z), \\
\opm(a_{j,\infty}(x,\sigma,x)) : x^{\alpha}H^{s;(\ell_1,\ell_2)}_b(\R_+\times Z) \to x^{\alpha+K}H^{s-\mu_j;(\ell_1,\ell_2)}_b(\R_+\times Z)
\end{gather*}
for all $s,\alpha \in \R$. By the mapping properties of $G_1$ we obtain
$$
G_1 \opm(a_{2,0}(x,\sigma,x)) : x^{\alpha}H^{s;(\ell_1,\ell_2)}_b(\R_+\times Z) \to x^{\alpha'}H^{s';(\ell_1,\ell_2)}_b(\R_+\times Z)
$$
for all $s,s' \in \R$ and all $\alpha' \leq \alpha$, and
$$
G_1 \opm(a_{2,\infty}(x,\sigma,x)) : x^{\alpha}H^{s;(\ell_1,\ell_2)}_b(\R_+\times Z) \to x^{\alpha'}H^{s';(\ell_1,\ell_2)}_b(\R_+\times Z)
$$
for all $s,s' \in \R$ and all $\alpha' \leq \alpha + K$, and so also for all $\alpha' \leq \alpha$, which shows that
$$
G_1 \opm(a_{2}(x,\sigma,x)) : x^{\alpha}H^{s;(\ell_1,\ell_2)}_b(\R_+\times Z) \to x^{\alpha'}H^{s';(\ell_1,\ell_2)}_b(\R_+\times Z).
$$
for all $s,s' \in \R$ and all $\alpha' \leq \alpha$. Because
$$
G_2 : x^{\alpha}H^{s;(\ell_1,\ell_2)}_b(\R_+\times Z) \to x^{\alpha'}H^{s'+\mu_1;(\ell_1,\ell_2)}_b(\R_+\times Z)
$$
we get
$$
\opm(a_{1,0}(x,\sigma,x))G_2 : x^{\alpha}H^{s;(\ell_1,\ell_2)}_b(\R_+\times Z) \to x^{\alpha'}H^{s';(\ell_1,\ell_2)}_b(\R_+\times Z)
$$
for all $s,s'  \in \R$ and $\alpha' \leq \alpha$, and similarly
$$
G_2 : x^{\alpha}H^{s;(\ell_1,\ell_2)}_b(\R_+\times Z) \to x^{\alpha'-K}H^{s'+\mu_1;(\ell_1,\ell_2)}_b(\R_+\times Z)
$$
and
$$
\opm(a_{1,\infty}(x,\sigma,x))G_2 : x^{\alpha}H^{s;(\ell_1,\ell_2)}_b(\R_+\times Z) \to x^{\alpha'}H^{s';(\ell_1,\ell_2)}_b(\R_+\times Z),
$$
and so
$$
\opm(a_1(x,\sigma,x))G_2 : x^{\alpha}H^{s;(\ell_1,\ell_2)}_b(\R_+\times Z) \to x^{\alpha'}H^{s';(\ell_1,\ell_2)}_b(\R_+\times Z)
$$
for all $s,s' \in \R$ and all $\alpha' \leq \alpha$. For the formal adjoints we have
\begin{align*}
[G_1\opm(a_2(x,\sigma,x))]^* &= [\opm(a_2(x,\sigma,x))]^*G_1^*, \\
[\opm(a_1(x,\sigma,x))G_2]^* &= G_2^*[\opm(a_1(x,\sigma,x))]^*,
\end{align*}
and because $[\opm(a_j)(x,\sigma,x)]^* \in \Psi^{\mu_j;\vec{\ell}}_{\textup{(cl)}}(\R_+\times Z)$ by Theorem~\ref{AdjointsCalculus} we obtain with the above that both $\opm(a_1(x,\sigma,x))G_2$ and $G_1\opm(a_2(x,\sigma,x))$ are residual.

It remains to consider the composition $\opm(a_1(x,\sigma,x))\opm(a_2(x,\sigma,x))$. We first consider
$$
a_j(x,\sigma,\tau)  \in C^{\infty}_B(\R_+,M^{-\infty}_{O}(Z;\overline{\R}_+)).
$$
We can then compose $\opm(a_1(x,\sigma,x))\opm(a_2(x,\sigma,x)) = \opm(c(x,\sigma))$ by the standard Mellin pseudodifferential calculus, and the formulas \eqref{MelLeibniz} and \eqref{MelLeibnizRem} apply. For every $N \in \N$
\begin{align*}
c(x,\sigma) &= \sum\limits_{k=0}^{N-1}\frac{1}{k!}[\partial_{\sigma}^ka_1](x,\sigma,x)[(xD_x + \tau D_{\tau})^ka_2](x,\sigma,x) + r_N(x,\sigma) \\
&= c_N(x,\sigma) + r_{N}(x,\sigma).
\end{align*}
We further decompose
$$
r_N(x,\sigma) = r_{N,0}(x,\sigma) + r_{N,\infty}(x,\sigma)
$$
with
\begin{align*}
r_{N,0}(x,\sigma) = \frac{1}{2\pi}\! &\int_0^1\!\frac{(1-\theta)^{N-1}}{(N-1)!} \iint y^{-i\eta} \cdot \\
&[\partial_{\sigma}^Na_1](x,\sigma+\theta\eta,x) [(xD_{x} + \tau D_{\tau})^Na_{2,0}](xy,\sigma,xy)\,\frac{dy}{y}d\eta d\theta, \\
r_{N,\infty}(x,\sigma) = \frac{1}{2\pi}\! &\int_0^1\!\frac{(1-\theta)^{N-1}}{(N-1)!} \iint y^{-i\eta} \cdot \\
&[\partial_{\sigma}^Na_1](x,\sigma+\theta\eta,x) [(xD_{x} + \tau D_{\tau})^Na_{2,\infty}](xy,\sigma,xy)\,\frac{dy}{y}d\eta d\theta,
\end{align*}
where $a_{2,0} = \omega a_2$ and $a_{2,\infty} = (1-\omega)a_2$ as above.

\medskip

\noindent
Let $\mu_1',\mu_2' \in \R$, $j_0 \in \N_0$, and $\mu_0 < 0$ be arbitrary.

We first analyze $r_{N,0}(x,\sigma)$: For $0 \leq j \leq j_0$ write
\begin{align*}
x^jr_{N,0}(x,\sigma) = \frac{1}{2\pi}\! &\int_0^1\!\frac{(1-\theta)^{N-1}}{(N-1)!} \iint y^{-i\eta} \cdot \\
&[\tau^j\partial_{\sigma}^Na_1](x,\sigma+\theta\eta,x) [(xD_{x} + \tau D_{\tau})^Na_{2,0}](xy,\sigma,xy)\,\frac{dy}{y}d\eta d\theta.
\end{align*}
Because
$$
C^{\infty}_B(\R_+,M^{\mu_2';\vec{\ell}}_{O}(Z;\overline{\R}_+)) \ni a_2(x,\sigma,\tau) \mapsto a_{2,0}(x,\sigma,x) \in C^{\infty}_B(\R_+,M^{\mu_2';(\ell_1,\ell_2)}_{O}(Z))
$$
is continuous we see from the formula that there exists $N_0 \in \N$ such that for all $N \geq N_0$ and all $0 \leq j \leq j_0$ the maps
$$
(a_1,a_2) \mapsto x^jr_{N,0}(x,\sigma) \textup{ and } (a_1,a_2) \mapsto x^jr_{N,0}(x,\sigma-ij)
$$
are continuous in
$$
C^{\infty}_B(\R_+,M^{\mu_1';\vec{\ell}}_{O}(Z;\overline{\R}_+)) \times C^{\infty}_B(\R_+,M^{\mu_2';\vec{\ell}}_{O}(Z;\overline{\R}_+)) \to C^{\infty}_B(\R_+,M^{\mu_0;(\ell_1,\ell_2)}_{O}(Z)),
$$
and so
\begin{gather*}
\opm(r_{N,0}),\, [\opm(r_{N,0})]^* = x^{-j} [\opm(x^jr_{N,0}(x,\sigma-ij))]^* : \\
x^{\alpha}H^{s;(\ell_1,\ell_2)}_b(\R_+\times Z) \to x^{\alpha-j}H^{s-\mu_0;(\ell_1,\ell_2)}_b(\R_+\times Z)
\end{gather*}
for all $\alpha,s \in \R$.

We next want to show the same mapping properties for $\opm(r_{N,\infty})$ for $N$ sufficiently large. Pick $K > 0$ large enough such that
$$
C^{\infty}_B(\R_+,M^{\mu_2';\vec{\ell}}_{O}(Z;\overline{\R}_+)) \!\ni\! a_2(x,\sigma,\tau) \mapsto x^{-K}a_{2,\infty}(x,\sigma,x) \!\in\! C^{\infty}_B(\R_+,M^{\mu_2';(\ell_1,\ell_2)}_{O}(Z))
$$
is continuous. We use the analyticity of the operator-valued symbols in the oscillatory integral formula for $r_{N,\infty}(x,\sigma)$ to shift the integration contour and write for every $0 \leq j \leq j_0$
\begin{gather*}
x^jr_{N,\infty}(x,\sigma) = \frac{1}{2\pi}\! \int_0^1\!\frac{(1-\theta)^{N-1}}{(N-1)!} \iint y^{-i\eta} \cdot \\
[\tau^{j+K}\partial_{\sigma}^Na_1](x,\sigma+\theta(\eta-iK),x)[x^{-K}(xD_{x}+\tau D_{\tau})^Na_{2,\infty}](xy,\sigma,xy)\,\frac{dy}{y}d\eta d\theta.
\end{gather*}
This formula shows that we can find $N_0 \in \N$ such that for $N \geq N_0$ and $0 \leq j \leq j_0$ the maps
$$
(a_1,a_2) \mapsto x^jr_{N,\infty}(x,\sigma) \textup{ and } (a_1,a_2) \mapsto x^jr_{N,\infty}(x,\sigma-ij)
$$
are continuous in
$$
C^{\infty}_B(\R_+,M^{\mu_1';\vec{\ell}}_{O}(Z;\overline{\R}_+)) \times C^{\infty}_B(\R_+,M^{\mu_2';\vec{\ell}}_{O}(Z;\overline{\R}_+)) \to C^{\infty}_B(\R_+,M^{\mu_0;(\ell_1,\ell_2)}_{O}(Z)),
$$
so $\opm(r_{N,\infty})$ and $[\opm(r_{N,\infty})]^*$ have the desired mapping properties. We conclude that
$$
\opm(r_{N}),\, [\opm(r_{N})]^* : x^{\alpha}H^{s;(\ell_1,\ell_2)}_b(\R_+\times Z) \to x^{\alpha-j}H^{s-\mu_0;(\ell_1,\ell_2)}_b(\R_+\times Z)
$$
for all $\alpha,s \in \R$.

Now let $a_j(x,\sigma,\tau) \in C^{\infty}_B(\R_+,M^{\mu_j;\vec{\ell}}_{O,\textup{(cl)}}(Z;\overline{\R}_+))$. By Lemma~\ref{AnalyticSmoothingApprox} there exist sequences $a_{j,\nu} \in C^{\infty}_B(\R_+,M^{-\infty}_{O}(Z;\overline{\R}_+))$ such that $a_{j,\nu}(x,\sigma,\tau) \to a_j(x,\sigma,\tau)$ as $\nu \to \infty$ in $C^{\infty}_B(\R_+,M^{\mu_j;\vec{\ell}}_{O,\textup{(cl)}}(Z;\overline{\R}_+))$ for $\mu_j' > \mu_j$.

For every $u \in \S_0(\R_+\times Z)$ and $N \in \N$ large enough
\begin{gather*}
\opm(a_1(x,\sigma,x))\opm(a_2(x,\sigma,x))u \longleftarrow \opm(a_{1,\nu}(x,\sigma,x))\opm(a_{2,\nu}(x,\sigma,x))u \\
= \opm(c_{N,\nu})u + \opm(r_{N,\nu})u \longrightarrow \opm(c_N)u + \opm(r_N)u.
\end{gather*}
So $\opm(a_1(x,\sigma,x))\opm(a_2(x,\sigma,x)) = \opm(c_N) + \opm(r_N)$ for $N$ large enough, and $\opm(r_N)$ has the mapping properties previously shown. Consequently, if the operator-valued symbol $a_1{\#}a_2 \in C^{\infty}_B(\R_+,M^{\mu_1+\mu_2;\vec{\ell}}_{O,\textup{(cl)}}(Z;\overline{\R}_+))$ has the asymptotic expansion \eqref{LeibnizAsymptotics}, then
$$
G_0 = \opm(a_1(x,\sigma,x))\opm(a_2(x,\sigma,x)) - \opm((a_1{\#}a_2)(x,\sigma,x))
$$
satisfies
$$
G_0,\,G_0^* : x^{\alpha}H^{s;(\ell_1,\ell_2)}_b(\R_+\times Z) \to x^{\alpha'}H^{s';(\ell_1,\ell_2)}_b(\R_+\times Z)
$$
for all $\alpha,s,s' \in \R$ and all $\alpha' \leq \alpha$ by Lemma~\ref{ImprovedMappingProps} and the mapping properties of $\opm(r_N)$ (as $N \to \infty$). The theorem is proved.
\end{proof}

\subsection{Ellipticity and parametrices on {\boldmath ${\mathbb R}_+\times Z$}}

\begin{definition}
The operator $A = \opm(a(x,\sigma,x)) + G \in \Psi^{\mu;\vec{\ell}}_{\textup{cl}}(\R_+\times Z)$ with $a(x,\sigma,\tau) \in C^{\infty}_B(\R_+,M^{\mu;\vec{\ell}}_{O,\textup{cl}}(Z;\overline{\R}_+))$ is fully elliptic if $\sym_e(A)(z,x,\zeta,\sigma,\tau)$ is invertible for all $(z,\zeta,\sigma,\tau) \in [T^*Z \times \R \times \overline{\R}_+] \setminus 0$ and all $x > 0$, and the inverse
$$
|\sym^{-1}_e(A)(z,x,\zeta,\sigma,\tau)|
$$
is bounded as $x$ varies in $\R_+$ and $(z,\zeta,\sigma,\tau) \in T^*Z \times \R \times \overline{\R}_+$ with $|\zeta|_{g_Z(z)}^{2\ell_2\ell_3} + |\sigma|^{2\ell_1\ell_3} + |\tau|^{2\ell_1\ell_2} = 1$, $z \in Z$, where $g_Z$ is a Riemannian metric on $Z$.
\end{definition}

\begin{theorem}\label{ParametrixModel}
Let $A \in \Psi^{\mu;\vec{\ell}}_{\textup{cl}}(\R_+\times Z)$ be fully elliptic. The there exists $P \in \Psi^{-\mu;\vec{\ell}}_{\textup{cl}}(\R_+\times Z)$ such that $AP - I,\, PA  - I \in \Psi^{-\infty}_G(\R_+\times Z)$.
\end{theorem}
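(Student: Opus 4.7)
The plan is the standard elliptic parametrix construction adapted to this Mellin-based calculus: invert the extended principal symbol on the anisotropic hemisphere bundle, lift it to a holomorphic operator-valued Mellin symbol via the kernel cut-off operator \eqref{HphiSpaces}, and improve the resulting zeroth order approximation iteratively by an asymptotic Neumann series summed using Proposition~\ref{AsymptoticComplete}.

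Write $A = \opm(a(x,\sigma,x)) + G$ with $a \in C^{\infty}_B(\R_+,M^{\mu;\vec{\ell}}_{O,\textup{cl}}(Z;\overline{\R}_+))$ and $G \in \Psi^{-\infty}_G(\R_+\times Z)$. Full ellipticity provides the pointwise inverse $\sym_e(A)^{-1}$, anisotropic homogeneous of degree $-\mu$ and uniformly bounded on the hemisphere as $x$ varies in $\R_+$; since $a \in C^{\infty}_B$, differentiating the identity $\sym_e(A)\,\sym_e(A)^{-1} = I$ inductively shows that all $(xD_x)$- and fiber-derivatives of $\sym_e(A)^{-1}$ are likewise bounded on the hemisphere. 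Quantize $\sym_e(A)^{-1}$ into $b_0 \in C^{\infty}_B(\R_+,L^{-\mu;\vec{\ell}}_{\textup{cl}}(Z;\R\times\overline{\R}_+))$ with the prescribed principal symbol, and apply the kernel cut-off $H(\phi)$ in $\sigma$ (which acts pointwise in $x$ and hence preserves $C^{\infty}_B$-dependence there) to obtain $p_0 \in C^{\infty}_B(\R_+,M^{-\mu;\vec{\ell}}_{O,\textup{cl}}(Z;\overline{\R}_+))$ with the same extended principal symbol. Set $P_0 = \opm(p_0(x,\sigma,x)) \in \Psi^{-\mu;\vec{\ell}}_{\textup{cl}}(\R_+\times Z)$.

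Theorem~\ref{CompositionCalculus} and the Leibniz expansion \eqref{LeibnizAsymptotics} then yield $AP_0 = I + R_1$ with $R_1 \in \Psi^{-1;\vec{\ell}}_{\textup{cl}}(\R_+\times Z)$; the contribution $GP_0$ from the Green part of $A$ is absorbed into the residual by the composition arguments already given in the proof of Theorem~\ref{CompositionCalculus}. Iteratively set $P_N = P_0 \sum_{k=0}^{N}(-R_1)^k$, so that $AP_N = I - (-R_1)^{N+1}$; the successive differences $P_{N+1}-P_N = P_0(-R_1)^{N+1}$ have operator-valued Mellin symbols in $C^{\infty}_B(\R_+,M^{-\mu-(N+1);\vec{\ell}}_{O,\textup{cl}}(Z;\overline{\R}_+))$ modulo residuals. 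Apply Proposition~\ref{AsymptoticComplete} to produce $q \in C^{\infty}_B(\R_+,M^{-\mu;\vec{\ell}}_{O,\textup{cl}}(Z;\overline{\R}_+))$ asymptotically summing these symbols, and set $P = \opm(q(x,\sigma,x)) \in \Psi^{-\mu;\vec{\ell}}_{\textup{cl}}(\R_+\times Z)$. Then the operator-valued Mellin symbol of $AP - I$ lies in every $C^{\infty}_B(\R_+,M^{-N;\vec{\ell}}_O(Z;\overline{\R}_+))$, and hence in $C^{\infty}_B(\R_+,M^{-\infty}_O(Z;\overline{\R}_+))$, so $AP - I \in \Psi^{-\infty}_G$ by the lemma preceding Proposition~\ref{AsymptoticComplete}.

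For the two-sided property, apply the same construction to $A^*$, which is fully elliptic with $\sym_e(A^*) = \overline{\sym_e(A)}$ by Theorem~\ref{AdjointsCalculus}, to obtain a right parametrix $Q$ for $A^*$; then $P' := Q^*$ satisfies $P'A - I \in \Psi^{-\infty}_G$. The standard identity $P - P' = P'(AP - I) - (P'A - I)P$, together with the fact that $\Psi^{-\infty}_G$ absorbs compositions from either side with elements of the calculus (again from the proof of Theorem~\ref{CompositionCalculus}), yields $P - P' \in \Psi^{-\infty}_G$, and thus $PA - I \in \Psi^{-\infty}_G$ as well. The main technical obstacle throughout is preserving the $C^{\infty}_B$-control in the radial variable $x$: the boundedness-on-the-hemisphere clause in the definition of full ellipticity is tailored precisely to handle this at the initial inversion step, and the $C^{\infty}_B$-stability of composition, kernel cut-off, and asymptotic summation is baked into the calculus in Section~\ref{sec-MellinCalcReview} so that this control automatically propagates through every iteration.
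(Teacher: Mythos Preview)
Your proof is correct and follows essentially the same strategy as the paper: invert at the symbol level, holomorphize via kernel cut-off, and run the formal Neumann series using the composition theorem and asymptotic completeness. The only cosmetic difference is that the paper first builds a \emph{full} parameter-dependent parametrix $b$ for $a$ in $C^{\infty}_B(\R_+,L^{-\mu;\vec{\ell}}_{\textup{cl}}(Z;\R\times\overline{\R}_+))$ (so $ab-1$ is already smoothing there) before applying $H(\phi)$, and then simply invokes the standard two-sided Neumann-series argument rather than passing through the adjoint; both routes land on the same $\Psi^{-1;\vec{\ell}}_{\textup{cl}}$-remainder after the first step and proceed identically thereafter.
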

\begin{proof}
By assumption
$$
a(x,\sigma,\tau) \in C^{\infty}_B(\R_+,L^{\mu;\vec{\ell}}_{\textup{cl}}(Z;\R\times\overline{\R}_+))
$$
is parameter-dependent elliptic, uniformly with respect to $x \in \R_+$. There exists a parameter-dependent parametrix
$$
b(x,\sigma,\tau) \in C^{\infty}_B(\R_+,L^{-\mu;\vec{\ell}}_{\textup{cl}}(Z;\R\times\overline{\R}_+))
$$
such that $ab - 1,\, ba - 1 \in C^{\infty}_B(\R_+,L^{-\infty}(Z;\R\times\overline{\R}_+))$. Replacing $b$ by $H(\phi)b$ with the kernel cut-off operator \eqref{HphiSpaces} gives $b \in C^{\infty}_B(\R_+,M^{-\mu;\vec{\ell}}_{O,\textup{cl}}(Z;\overline{\R}_+))$ such that
$$
ab - 1,\,ba - 1 \in C^{\infty}_B(\R_+,M^{-\infty}_O(Z;\overline{\R}_+)).
$$
Now $B = \opm(b(x,\sigma,x)) \in \Psi^{-\mu;\vec{\ell}}_{\textup{cl}}(\R_+\times Z)$, and $AB - I,\, BA - I \in \Psi^{-1;\vec{\ell}}_{\textup{cl}}(\R_+\times Z)$ by Theorem~\ref{CompositionCalculus}. By the composition theorem and asymptotic completeness (Proposition~\ref{AsymptoticComplete}) the formal Neumann series argument is applicable which yields the desired parametrix $P \in \Psi^{-\mu;\vec{\ell}}_{\textup{cl}}(\R_+\times Z)$.
\end{proof}


\section{The calculus on $M$}

\noindent
Let $M$ be an asymptotically conic manifold as described in Section~\ref{ManEnds}. On $M$ we fix a positive density that on the noncompact end for large $x \gg 1$ is given by $\frac{dx}{x}dz$, where $dz$ the positive density associated with an arbitrary (but henceforth fixed) Riemannian metric $g_Z$ on the cross-section $Z$. Let $L^2_b(M)$ be the $L^2$-space associated with that density, which is the base space for our analysis on $M$. While we are going to discuss the details of the calculus for scalar-valued functions here, we point out that the constructions equally work for operators acting in sections of vector bundles over $M$. On the noncompact end, a Hermitian vector bundle $E$ is isometric to the pull-back of a Hermitian vector bundle on $Z$ (where $Z$ is identified as $x=1$)
$$
E\big|_{[1,\infty)\times Z} \cong \pi^* E\big|_{Z}, \quad \pi : [1,\infty)\times Z \to Z.
$$
Such an isometry can be obtained by parallel transport with respect to a compatible connection on $E$ along the integral curves of the vector field $\partial_x$. The calculus on the noncompact end is then defined as acting in sections of pull-back bundles from $Z$, and is based on quantizations of symbols, as discussed in Section~\ref{PseudoCalculusExit}, but now taking values in operators on $Z$ that act in sections of vector bundles over $Z$. There is invariance with the requirement that the transition map, as well as its inverse, which relate different isomorphisms $E\big|_{[1,\infty)\times Z} \cong \pi^* E\big|_{Z}$, are operators in the calculus of order $0$.

\medskip

\noindent
Throughout this section the anisotropy vector $\vec{\ell} = (\ell_1,\ell_1,\ell_3) \in \N^3$ is fixed.

\subsection{Residual operators}

For $s \in \R$ let $H^s_b(M)$ denote the space of all $u \in H^{s}_{\textup{loc}}(M)$ such that for some $\phi \in C_c^{\infty}(M)$ with $\phi \equiv 1$ when $x \leq \frac{1}{2}$ and $\phi \equiv 0$ for $x \geq \frac{3}{4}$, we have
$$
(1 - \phi)u \in H^s_b(\R_+\times Z).
$$
Recall that $x$ is globally defined on $M$, see Section~\ref{ManEnds}. Observe that we use this $b$-Sobolev space to control behavior of functions as $x \to \infty$ here, while typically $b$-Sobolev spaces are utilized to control behavior as $x \to 0$.

This $b$-Sobolev space $H^s_b(M)$ can be given a Hilbert space structure; it is realized as a quotient of the direct sum of $H^s(2M_0) \times H^s_b(\R_+\times Z)$, where $2M_0 = M_0 \sqcup_{Z} M_0$ is the double of the compact manifold $M_0$ with boundary $Z$, see Section~\ref{ManEnds}, by identifying tuples modulo the kernel of the map $(v,w) \mapsto \phi v + (1-\phi)w$. We have $H^0_b(M) = L^2_b(M)$, continuous embeddings $x^{\alpha}H^s_b(M) \hookrightarrow x^{\alpha'}H^{s'}_b(M)$ for $s \geq s'$ and $\alpha \leq \alpha'$ (compact if both inequalities are strict), and duality $[x^{\alpha}H^s_b(M)]' \cong x^{-\alpha}H^{-s}_b(M)$ with respect to the extended $L^2_b$-inner product. Moreover,
$$
\S(M) = \bigcap\limits_{s,\alpha \in \N_0}x^{-\alpha}H^s_b(M), \quad \S'(M) = \bigcup\limits_{s,\alpha \in \N_0}x^{\alpha}H^{-s}_b(M).
$$

\medskip

\noindent
The class $\Psi^{-\infty}_G(M)$ of residual operators on $M$ is defined to consist of all operators
$$
G : \S(M) \to \S(M)
$$
that are integral operators with kernels in $\S(M\times M)$ with respect to the $b$-density on $M$. In terms of the $b$-Sobolev spaces, $G \in \Psi^{-\infty}_G(M)$ if and only if
$$
G : x^{\alpha}H^s_b(M) \to x^{\alpha'}H^{s'}_b(M)
$$
is continuous for all $s,s',\alpha,\alpha' \in \R$.

Observe that with $G$ also $G^* \in \Psi^{-\infty}_G(M)$. Moreover, $x^{\beta}G,\, G x^{\beta} \in \Psi^{-\infty}_G(M)$ for all $\beta \in \R$.

\subsection{Definition and properties of the calculus on {\boldmath $M$}}

The class $\Psi^{\mu;\vec{\ell}}_{\textup{(cl)}}(M)$ consists of all continuous operators
$$
A : \S(M) \to \S(M)
$$
with the following properties:
\begin{itemize}
\item For all $\phi,\psi \in C_c^{\infty}(M)$ with $\textup{supp}(\phi)\cap\textup{supp}(1-\psi) = \emptyset$ we have
$$
\phi A (1 - \psi),\, (1-\psi) A \phi \in \Psi^{-\infty}_G(M).
$$
\item For all $\phi,\psi \in C_c^{\infty}(M)$, the operator $\phi A \psi \in L^{\mu;\ell_1}_{\textup{(cl)}}(M)$, see Section~\ref{step1lcalc}.
\item There exists an $R \gg 1$ depending on $A$ such that, for all $\phi,\psi \in C_c^{\infty}(M)$ with both $\textup{supp}(1-\phi)$ and $\textup{supp}(1-\psi)$ contained in $x > R$, we have
$$
(1-\phi)A(1-\psi) \in \Psi^{\mu;\vec{\ell}}_{\textup{(cl)}}(\R_+\times Z).
$$ 
\end{itemize}
By Lemma~\ref{Compatibility} we have $\Psi^{\mu;\vec{\ell}}_{\textup{(cl)}}(M) \subset L^{\mu;\ell_1}_{\textup{(cl)}}(M)$. In particular, every $A \in \Psi^{\mu;\vec{\ell}}_{\textup{cl}}(M)$ has a homogeneous principal symbol $\sym_{\psi}(A)$ on $T^*M \setminus 0$, where
$$
\sym_{\psi}(A)(y,\varrho^{\ell_1}\eta) = \varrho^{\mu}\sym_{\psi}(A)(y,\eta), \quad \varrho > 0.
$$
Moreover, for $R \gg 1$ large enough, we can restrict $A$ to an operator
$$
C_c^{\infty}(\{x > R\}) \cong C_c^{\infty}((R,\infty),C^{\infty}(Z)) \to C^{\infty}((R,\infty),C^{\infty}(Z)) \cong C^{\infty}(\{x > R\}),
$$
and by assumption on $A$ can find $a(x,\sigma,\tau) \in C_B^{\infty}(\R_+,M_{O,\textup{cl}}^{\mu;\vec{\ell}}(Z;\overline{\R}_+))$ and $G \in \Psi^{-\infty}_G(\R_+\times Z)$ such that
$$
[Au](x) = [\opm(a(x,\sigma,x))u](x) + [Gu](x), \quad x > R,
$$
for $u \in C_c^{\infty}((R,\infty),C^{\infty}(Z))$. For $x > R$ we then define
$$
\sym_e(A) = \sym(a)(z,x,\zeta,\sigma,\tau)
$$
for $(\zeta,\sigma,\tau) \neq 0$ as in \eqref{AeSym}. We have
$$
\sym_e(A)(z,x,\varrho^{\ell_1}\zeta,\varrho^{\ell_1}\sigma,\varrho^{\ell_3}\tau) = \varrho^{\mu}\sym_e(A)(z,x,\zeta,\sigma,\tau), \quad \varrho > 0,
$$
and
\begin{equation}\label{compatsymbol}
\sym_{\psi}(A)(z,x,\zeta,\xi) = \sym_e(A)(z,x,\zeta,x\xi,0).
\end{equation}
We then consider the tuple
$$
\sym(A):= (\sym_{\psi}(A),\sym_e(A)),
$$
where $\sym_e(A)$ only makes sense for $x > R \gg 1$ with $R$ depending on $A$, and where both $\sym_{\psi}(A)$ and $\sym_e(A)$ are coupled by \eqref{compatsymbol}, to be the full principal symbol of $A \in \Psi^{\mu;\vec{\ell}}_{\textup{cl}}(M)$. Observe that $\sym(A)$ determines $A$ modulo $\Psi^{\mu-1;\vec{\ell}}_{\textup{cl}}(M)$. It follows from the properties discussed below that
$$
0 \longrightarrow \Psi^{\mu-1;\vec{\ell}}_{\textup{cl}}(M) \longrightarrow \Psi^{\mu;\vec{\ell}}_{\textup{cl}}(M) \longrightarrow \Sigma^{\mu;\vec{\ell}} \longrightarrow 0
$$
is split-exact, where a right-inverse of the symbol map $\sym : \Psi^{\mu;\vec{\ell}}_{\textup{cl}}(M) \longrightarrow \Sigma^{\mu;\vec{\ell}}$ is given by patching quantization maps for $L^{\mu;\ell_1}_{\textup{cl}}(M)$ and $\Psi^{\mu;\vec{\ell}}_{\textup{cl}}(\R_+\times Z)$ (for large $x > R \gg 1$) with a partition of unity on $M$.

The following statements are immediate consequences from the definition and the properties of both $L^{\mu;\ell_1}_{\textup{(cl)}}(M)$ and $\Psi^{\mu;\vec{\ell}}_{\textup{(cl)}}(\R_+\times Z)$:

\begin{enumerate}
\item For any $\phi,\psi \in C_c^{\infty}(M)$ we have $\phi L^{\mu;\ell_1}_{\textup{(cl)}}(M) \psi \subset \Psi^{\mu;\vec{\ell}}_{\textup{(cl)}}(M)$.
\item For any $\phi,\psi \in C_c^{\infty}(M)$ with both $\textup{supp}(1-\phi)$ and $\textup{supp}(1-\psi)$ contained in $x > 1$ we have
$$
(1-\phi) \Psi^{\mu;\vec{\ell}}_{\textup{(cl)}}(\R_+\times Z)(1-\psi) \subset \Psi^{\mu;\vec{\ell}}_{\textup{(cl)}}(M).
$$
\item For any $\beta \in \R$ we have $x^{-\beta} \Psi^{\mu;\vec{\ell}}_{\textup{(cl)}}(M) x^{\beta} \subset \Psi^{\mu;\vec{\ell}}_{\textup{(cl)}}(M)$. If $A \in \Psi^{\mu;\vec{\ell}}_{\textup{cl}}(M)$, then  $\sym(x^{-\beta}Ax^{\beta}) = \sym(A)$.
\end{enumerate}

\noindent
The last property makes it possible to consider the calculus with weights $x^{\gamma} \Psi^{\mu;\vec{\ell}}_{\textup{(cl)}}(M)$ for $\gamma \in \R$. For $A \in x^{\gamma} \Psi^{\mu;\vec{\ell}}_{\textup{cl}}(M)$ we define $\sym(A):= \sym(x^{-\gamma}A)$.

\begin{theorem}\label{AdjointsM}
Let $A \in x^{\gamma} \Psi^{\mu;\vec{\ell}}(M)$. Then the formal adjoint $A^*$ with respect to the $L^2_b$-inner product on $M$ belongs to $x^{\gamma} \Psi^{\mu;\vec{\ell}}(M)$. If $A \in x^{\gamma} \Psi^{\mu;\vec{\ell}}_{\textup{cl}}(M)$ then also $A^* \in x^{\gamma} \Psi^{\mu;\vec{\ell}}_{\textup{cl}}(M)$, and we have $\sym(A^*) = \sym(A)^*$ (fiberwise adjoints).
\end{theorem}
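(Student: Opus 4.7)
The plan is to reduce to the unweighted case $\gamma=0$ via property~(3) of the calculus on $M$, and then verify the three defining conditions of $\Psi^{\mu;\vec{\ell}}(M)$ for $A^*$ directly by passing to adjoints in the corresponding conditions for $A$, invoking Theorem~\ref{AdjointsCalculus} for the conic end. Writing $A = x^\gamma B$ with $B \in \Psi^{\mu;\vec{\ell}}(M)$, we have $A^* = B^* x^\gamma = x^\gamma(x^{-\gamma} B^* x^\gamma)$, so once $B^* \in \Psi^{\mu;\vec{\ell}}(M)$ is established, property~(3) immediately yields $A^* \in x^\gamma \Psi^{\mu;\vec{\ell}}(M)$ and, in the classical case, $\sym(A^*) = \sym(x^{-\gamma}A^*) = \sym(x^{-\gamma}B^*x^\gamma) = \sym(B^*) = \sym(B)^* = \sym(A)^*$.

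To verify $B^* \in \Psi^{\mu;\vec{\ell}}(M)$ I check the three conditions in the definition. For $\phi,\psi \in C_c^\infty(M)$ with $\textup{supp}(\phi)\cap\textup{supp}(1-\psi) = \emptyset$, the identity $\phi B^*(1-\psi) = [(1-\psi)B\phi]^*$ together with closure of $\Psi^{-\infty}_G(M)$ under formal adjoints (noted immediately after that definition) gives the disjoint-support residuality; the statement for $(1-\psi)B^*\phi$ is symmetric. For $\phi,\psi \in C_c^\infty(M)$, $\phi B^*\psi = (\psi B \phi)^*$ lies in $L^{\mu;\ell_1}(M)$ by the standard closure of the step-$\tfrac{1}{\ell_1}$ polyhomogeneous calculus under adjoints. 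Finally, taking the same $R \gg 1$ as in property~(iii) for $B$, the identity $(1-\phi)B^*(1-\psi) = [(1-\psi)B(1-\phi)]^*$ combined with Theorem~\ref{AdjointsCalculus} applied to $(1-\psi)B(1-\phi) \in \Psi^{\mu;\vec{\ell}}(\R_+\times Z)$ yields $(1-\phi)B^*(1-\psi) \in \Psi^{\mu;\vec{\ell}}(\R_+\times Z)$.

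Continuity $B^* : \S(M) \to \S(M)$ then follows by a nested cutoff decomposition. Choose $\chi_1,\chi_2 \in C_c^\infty(M)$ with $\chi_1 \equiv 1$ on $\{x \leq R+1\}$ and $\chi_2 \equiv 1$ on $\textup{supp}(\chi_1)$, and write $1 = \chi_1 + (\chi_2-\chi_1) + (1-\chi_2)$. Expanding $B^*$ against this partition on both sides, the compactly supported pieces $\chi_iB^*\chi_j$ are standard pseudodifferential operators and map $\S(M) \to C_c^\infty(M)$; the end piece $(1-\chi_2)B^*(1-\chi_2)$ lies in $\Psi^{\mu;\vec{\ell}}(\R_+\times Z)$ and hence maps $\S_0(\R_+\times Z) \to \S_0(\R_+\times Z)$, which under the identification of the end translates to a map $\S(M) \to \S(M)$; and the mixed terms $\chi_1 B^*(1-\chi_2)$ and $(1-\chi_2)B^*\chi_1$ are residual by property~(i), since $\textup{supp}(\chi_1)\cap\textup{supp}(1-\chi_2)=\emptyset$. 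In the classical case, $\sym_\psi(B^*) = \overline{\sym_\psi(B)}$ by the standard principal symbol formula on $T^*M\setminus 0$, while $\sym_e(B^*) = \overline{\sym_e(B)}$ on the end by Theorem~\ref{AdjointsCalculus}; the compatibility relation \eqref{compatsymbol} is preserved under complex conjugation, so $\sym(B^*) = \sym(B)^*$. The essential analytic work has already been done in Theorem~\ref{AdjointsCalculus}; what remains is organizational bookkeeping, and the only subtlety is arranging the cross-terms between the compact core and the conic end so that property~(i) can be invoked directly, which is precisely the role of the nested pair of cutoffs $\chi_1 \prec \chi_2$.
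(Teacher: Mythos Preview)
Your argument is sound and close in spirit to the paper's. The paper also reduces to $\gamma=0$, but then, rather than verifying the three defining bullets for $B^*$ one at a time, it decomposes $B$ itself as $B = \phi B \psi + (1-\phi) B (1-\tilde\psi) + G$ with real nested cutoffs $\tilde\psi \prec \phi \prec \psi$ (the remainder $G$ residual by bullet (i) for $B$), takes the formal adjoint of each summand, and invokes properties (1) and (2) listed just before Theorem~\ref{AdjointsM} to place $\psi B^* \phi$ and $(1-\tilde\psi)B^*(1-\phi)$ back into $\Psi^{\mu;\vec{\ell}}(M)$; since that class is a linear space one gets $B^* \in \Psi^{\mu;\vec{\ell}}(M)$ in one stroke, continuity on $\S(M)$ included. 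Your direct verification of the bullets via the identities $\phi B^*(1-\psi) = [(1-\psi)B\phi]^*$ etc.\ is an equally valid organization; both routes rest on Theorem~\ref{AdjointsCalculus} for the conic end.

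There is one small loose end in your continuity paragraph. With the symmetric three-term partition $1=\chi_1+(\chi_2-\chi_1)+(1-\chi_2)$ on both sides you produce nine terms, and you account for the four compact--compact terms, the end--end term $(1-\chi_2)B^*(1-\chi_2)$, and the two mixed terms pairing $\chi_1$ with $1-\chi_2$; but the remaining cross terms $(\chi_2-\chi_1)B^*(1-\chi_2)$ and $(1-\chi_2)B^*(\chi_2-\chi_1)$ are not covered, and $\textup{supp}(\chi_2-\chi_1)$ and $\textup{supp}(1-\chi_2)$ are not disjoint in general, so bullet (i) does not dispose of them. The simplest repair is to switch to an \emph{asymmetric} decomposition with three nested cutoffs $\tilde\psi \prec \phi \prec \psi$, all $\equiv 1$ on $\{x\le R+1\}$:
\[
B^* = \phi B^*\psi + \phi B^*(1-\psi) + (1-\phi)B^*\tilde\psi + (1-\phi)B^*(1-\tilde\psi).
\]
The first summand is a compactly supported element of $L^{\mu;\ell_1}(M)$, the last lies in $\Psi^{\mu;\vec{\ell}}(\R_+\times Z)$ by your already verified bullet (iii), and the two middle summands are residual by your bullet (i). This is exactly the paper's decomposition, applied to $B^*$ rather than to $B$.
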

\begin{proof}
It suffices to consider the case $\gamma = 0$. Write
$$
A = \phi A \psi + (1-\phi) A (1-\tilde{\psi}) + G,
$$
where $G \in \Psi^{-\infty}_G(M)$ and $\phi,\psi,\tilde{\psi} \in C_c^{\infty}(M)$ real-valued with both $\textup{supp}(1-\phi)$ and $\textup{supp}(1-\tilde{\psi})$ contained in $x > R$, where $R \gg 1$ is large, and $\tilde{\psi} \prec \phi \prec \psi$ (we write $g \prec f$ when $f \equiv 1$ in a neighborhood of the support of $g$).

Now $\phi A \psi \in L_{\textup{(cl)}}^{\mu;\ell_1}(M)$ with $[\phi A \psi]^* = \psi A^* \phi \in L_{\textup{(cl)}}^{\mu;\ell_1}(M)$, and as both $\phi$ and $\psi$ have compact support we get $[\phi A \psi]^* \in \Psi^{\mu;\vec{\ell}}_{\textup{(cl)}}(M)$, and in the classical case $\sym([\phi A \psi]^*) = \phi \sym(A)^*$. Similarly, for $R \gg 1$ large enough, $(1-\phi) A (1-\tilde{\psi}) \in \Psi^{\mu;\vec{\ell}}_{\textup{(cl)}}(\R_+\times Z)$, and by Theorems~\ref{AdjointsCalculus} and \ref{CompositionCalculus} we have $[(1-\phi) A (1-\tilde{\psi})]^* = (1 - \tilde{\psi}) A^* (1-\phi) \in \Psi^{\mu;\vec{\ell}}_{\textup{(cl)}}(\R_+\times Z)$. In the classical case, $\sym([(1-\phi) A (1-\tilde{\psi})]^*) = (1-\phi)\sym(A)^*$. In conclusion,
$$
A^* =  [\phi A \psi]^* + [(1-\phi) A (1-\tilde{\psi})]^* + G^* \in \Psi^{\mu;\vec{\ell}}_{\textup{(cl)}}(M)
$$
with $\sym(A^*) = \phi \sym(A)^* + (1-\phi) \sym(A)^* = \sym(A)^*$ in the classical case as claimed.
\end{proof}

\noindent
We note that Theorem~\ref{AdjointsM} implies that every $A \in x^{\gamma}\Psi^{\mu;\vec{\ell}}(M)$ gives a continuous operator
$$
A : \S'(M) \to \S'(M).
$$
Note that we consistently use the fixed density on $M$ to trivialize the density bundle, and hence identify functions and densities.

\begin{theorem}\label{CompositionM}
Let $A_j \in x^{\gamma_j} \Psi^{\mu_j;\vec{\ell}}_{\textup{(cl)}}(M)$, $j = 1,2$. Then the composition $A_1A_2$ belongs to $x^{\gamma_1+\gamma_2} \Psi^{\mu_1+\mu_2;\vec{\ell}}_{\textup{(cl)}}(M)$, and in the classical case $\sym(A_1A_2) = \sym(A_1)\sym(A_2)$ with componentwise multiplication. The residual class $\Psi^{-\infty}_G(M)$ is an ideal in the calculus.
\end{theorem}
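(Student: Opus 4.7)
The plan is to reduce to the weight-free case $\gamma_1=\gamma_2=0$ via property~(3), establish that $\Psi^{-\infty}_G(M)$ is a two-sided ideal as a preparatory step, and then verify the three defining properties of $\Psi^{\mu_1+\mu_2;\vec{\ell}}(M)$ for $A_1A_2$ by a nine-term expansion arising from the localized decomposition used in the proof of Theorem~\ref{AdjointsM}. Writing $A_j = x^{\gamma_j}B_j$ with $B_j \in \Psi^{\mu_j;\vec{\ell}}(M)$, we have
$$
A_1A_2 = x^{\gamma_1+\gamma_2}\bigl(x^{-\gamma_2}B_1x^{\gamma_2}\bigr)B_2,
$$
and property~(3) places $x^{-\gamma_2}B_1x^{\gamma_2}$ in $\Psi^{\mu_1;\vec{\ell}}(M)$ with the same full principal symbol as $B_1$; the claim thus reduces to $\gamma_1=\gamma_2=0$. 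For the ideal property, both $A \in \Psi^{\mu;\vec{\ell}}(M)$ and its formal adjoint $A^*$ (by Theorem~\ref{AdjointsM}) act continuously on $\S(M)$; given $G \in \Psi^{-\infty}_G(M)$ with Schwartz kernel $g \in \S(M\times M)$, the kernel of $AG$ equals $A$ applied in the first variable to $g(\cdot,y')$, and the continuity $A:\S(M)\to\S(M)$ together with the rapid decay of $g$ in the second variable places this kernel in $\S(M\times M)$; the case $GA$ follows by taking adjoints.

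Now fix $R$ larger than the values required by the third defining property for both $A_1$ and $A_2$, pick real-valued cutoffs $\tilde\psi \prec \phi \prec \psi$ in $C_c^\infty(M)$ with $\textup{supp}(1-\phi), \textup{supp}(1-\tilde\psi) \subset \{x > R\}$, and decompose
$$
A_j = P_j + Q_j + G_j, \quad P_j = \phi A_j\psi, \quad Q_j = (1-\phi)A_j(1-\tilde\psi), \quad G_j \in \Psi^{-\infty}_G(M),
$$
where $P_j \in L^{\mu_j;\ell_1}_{\textup{(cl)}}(M)$ is compactly supported and $Q_j \in \Psi^{\mu_j;\vec{\ell}}_{\textup{(cl)}}(\R_+\times Z)$. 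In the nine-term expansion of $A_1A_2$, the five terms involving some $G_j$ are residual by the ideal property. The piece $P_1P_2$ is a composition of two compactly supported operators in $L^{\ast;\ell_1}_{\textup{(cl)}}(M)$, hence lies in $L^{\mu_1+\mu_2;\ell_1}_{\textup{(cl)}}(M)$ and, by property~(1), in $\Psi^{\mu_1+\mu_2;\vec{\ell}}_{\textup{(cl)}}(M)$, with standard symbol product rule. The piece $Q_1Q_2$ lies in $\Psi^{\mu_1+\mu_2;\vec{\ell}}_{\textup{(cl)}}(\R_+\times Z)$ by Theorem~\ref{CompositionCalculus}, with $\sym_e(Q_1Q_2) = \sym_e(Q_1)\sym_e(Q_2)$, and since its outer cutoffs are supported in $\{x>1\}$, property~(2) lifts it to $\Psi^{\mu_1+\mu_2;\vec{\ell}}_{\textup{(cl)}}(M)$. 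The compatibility~\eqref{compatsymbol} between $\sym_{\psi}$ and $\sym_e$ is preserved under products, delivering $\sym(A_1A_2) = \sym(A_1)\sym(A_2)$ componentwise.

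The main obstacle is showing the two cross terms $P_1Q_2$ and $Q_1P_2$ are residual. The Schwartz kernel of $P_1Q_2$ is compactly supported in the left variable (inside $\textup{supp}(\phi)$), and the intermediate integration variable is confined to the compact annular set $\textup{supp}(\psi)\cap\textup{supp}(1-\phi)$; what must be shown is rapid decay together with smoothness as the right variable $y'=(x',z')$ runs to infinity. Writing $Q_2 = \opm(p_2) + G_2'$ with $p_2(x,\sigma) = a_2(x,\sigma,x)$ holomorphic in $\sigma$, the remainder $G_2'$ already has Schwartz kernel on the exit, while for $\opm(p_2)$ the Mellin integral representation of the kernel admits a contour shift to $\{\Im\sigma=\gamma\}$ for arbitrarily negative $\gamma$, producing a factor $(x''/x')^{|\gamma|}$ that yields polynomial decay of arbitrary order in $x'$ together with all derivatives, provided $x''$ stays bounded; smoothness off the diagonal is automatic since the two $x$-supports are separated for $R$ large. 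Hence the kernel of $P_1Q_2$ lies in $\S(M\times M)$, and taking adjoints (using Theorem~\ref{AdjointsCalculus} to see $Q_2^* \in \Psi^{\mu_2;\vec{\ell}}_{\textup{(cl)}}(\R_+\times Z)$) handles $Q_1P_2$ by the same argument, completing the proof.
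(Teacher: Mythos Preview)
Your nine-term expansion breaks down at the cross terms: $P_1Q_2$ and $Q_1P_2$ are \emph{not} residual in general. Take $A_1=A_2=I$. Then $P_j=\phi\psi=\phi$ and $Q_j=(1-\phi)(1-\tilde\psi)=1-\phi$, so $P_1Q_2=\phi(1-\phi)$ is multiplication by a nonzero compactly supported smooth function; its Schwartz kernel is $\phi(1-\phi)(y)\,\delta(y-y')$, which is not in $\S(M\times M)$. The error in your kernel argument is the sentence ``smoothness off the diagonal is automatic since the two $x$-supports are separated for $R$ large'': the intermediate variable $y''$ lives in the annulus $\textup{supp}(\psi)\cap\textup{supp}(1-\phi)$, and the right variable $y'$ lives in $\textup{supp}(1-\tilde\psi)$, but since $\tilde\psi\prec\phi$ these two sets overlap (indeed $1-\tilde\psi\equiv 1$ on the annulus), so the diagonal of $Q_2$ contributes a genuine pseudodifferential singularity that survives the composition. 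Your contour-shift argument only addresses decay as $x'\to\infty$ and says nothing about smoothness on the diagonal where $y'$ stays in the annulus.

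The paper avoids this by not decomposing $A_1$ and $A_2$ globally. Instead it checks the three defining properties of $\Psi^{\mu_1+\mu_2;\vec\ell}(M)$ for $A_1A_2$ directly, and for each property inserts a single auxiliary cutoff between $A_1$ and $A_2$ chosen so that the ``error'' term always has the form (operator)$\cdot$(cutoff with disjoint support from the outer cutoff)$\cdot$(operator), which is residual by the first defining property together with the ideal property already established. For instance, for the exit piece $(1-\phi)A_1A_2(1-\psi)$ one writes $(1-\phi)A_1A_2(1-\psi)=[(1-\phi)A_1(1-\psi_1)][(1-\phi_1)A_2(1-\psi)]+(1-\phi)A_1[\phi_1A_2(1-\psi)]$ with $(1-\phi),(1-\psi)\prec 1-\phi_1\prec 1-\psi_1$, and now $\textup{supp}(\phi_1)\cap\textup{supp}(1-\psi)=\emptyset$ makes the bracketed factor residual. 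Your approach can be repaired along similar lines: insert a further cutoff $\psi'$ with $\psi\prec\psi'$ and split $P_1Q_2=P_1Q_2\psi'+P_1Q_2(1-\psi')$; the first summand is compactly supported in $L^{\mu_1+\mu_2;\ell_1}_{\textup{(cl)}}(M)$ (hence in the calculus by property~(1)), and the second is residual because $\psi(1-\phi)$ and $1-\psi'$ have disjoint supports. But as written, the claim that $P_1Q_2\in\Psi^{-\infty}_G(M)$ is false.
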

\begin{proof}
It suffices to consider $\gamma_1=\gamma_2=0$.

For any $A \in \Psi^{\mu;\vec{\ell}}(M)$ there exists $K > 0$ sufficiently large such that
\begin{equation}\label{AMap}
A : x^{\alpha}H^{\frac{s}{\ell_1}}_b(M) \to x^{\alpha+K}H^{\frac{s-\mu}{\ell_1}}_b(M)
\end{equation}
is continuous for all $\alpha, s \in \R$. To see this write
$$
A = \phi A \psi + (1-\phi)A(1-\tilde{\psi}) + G
$$
with appropriate $\phi,\psi,\tilde{\psi} \in C_c^{\infty}(M)$ and $G \in \Psi^{-\infty}_G(M)$ as in the proof of Theorem~\ref{AdjointsM}. Now
$$
\phi A \psi : x^{\alpha}H^{\frac{s}{\ell_1}}_b(M) \to x^{\alpha+K}H^{\frac{s-\mu}{\ell_1}}_b(M)
$$
for all $s,\alpha \in \R$ and all $K \geq 0$ because $\phi$ and $\psi$ have compact support. Moreover, $G$ maps any weighted $b$-Sobolev space into any other weighted $b$-Sobolev space across the full range of weight and regularity parameters. Finally,
$$
(1-\phi)A(1-\tilde{\psi}) = \opm(a(x,\sigma,x)) + \tilde{G},
$$
where $\tilde{G} \in \Psi^{-\infty}_G(M)$, and we have
$$
x^{-K} a(x,\sigma,x) \in C^{\infty}_B(\R_+,M^{\mu;(\ell_1,\ell_1)}_O(Z))
$$
for $K \geq \frac{\mu_+}{\ell_3}$; note that the supports of both $1-\phi$ and $1-\tilde{\psi}$ are contained in $x > R$ with $R \gg 1$ sufficiently large. Consequently,
$$
x^{-K} (1-\phi)A(1-\tilde{\psi}) : x^{\alpha}H^{\frac{s}{\ell_1}}_b(M) \to x^{\alpha}H^{\frac{s-\mu}{\ell_1}}_b(M)
$$
for all $\alpha,s \in \R$ and $K \geq \frac{\mu_+}{\ell_3}$, and the claimed mapping property \eqref{AMap} for these values of $K$ follows.

The mapping properties \eqref{AMap} for $A$ combined with the characterizing mapping properties for $\Psi^{-\infty}_G(M)$ now show that $AG, GA \in \Psi^{-\infty}_G(M)$ for all $A \in \Psi^{\mu;\vec{\ell}}(M)$ and $G \in \Psi^{-\infty}_G(M)$.

We next consider the composition $A_1A_2$:
\begin{itemize}
\item Let $\phi,\psi \in C_c^{\infty}(M)$ be arbitrary with $\textup{supp}(\phi)\cap\textup{supp}(1-\psi) = \emptyset$, and let $\tilde{\phi} \in C_c^{\infty}(M)$ with $\phi \prec \tilde{\phi}$ and $\textup{supp}(\tilde{\phi})\cap\textup{supp}(1-\psi) = \emptyset$. Recall from the proof of Theorem~\ref{AdjointsM} that we write $\phi \prec \tilde{\phi}$ if $\tilde{\phi} \equiv 1$ in a neighborhood of $\textup{supp}(\phi)$. Then
$$
\phi A_1A_2 (1-\psi) = \phi A_1 [\tilde{\phi} A_2 (1-\psi)] + [\phi A_1 (1-\tilde{\phi})] A_2(1-\psi),
$$
where $[\tilde{\phi} A_2 (1-\psi)], [\phi A_1 (1-\tilde{\phi})] \in \Psi^{-\infty}_G(M)$. By the already proven ideal property of $\Psi^{-\infty}_G(M)$ we obtain that $\phi A_1A_2 (1-\psi) \in \Psi^{-\infty}_G(M)$. In the same way we prove that $(1-\psi)A_1A_2\phi \in \Psi^{-\infty}_G(M)$.
\item Let $\phi,\psi \in C_c^{\infty}(M)$ be arbitrary. Pick functions $\phi_1,\psi_1 \in C_c^{\infty}(M)$ with $\phi,\psi \prec \phi_1 \prec \psi_1$. Then
$$
\phi A_1 A_2 \psi = [\phi A_1 \psi_1][\phi_1 A_2 \psi] + [\phi A_1 (1-\phi_1)]A_2\psi.
$$
We have $[\phi A_1 \psi_1][\phi_1 A_2 \psi] \in L^{\mu_1+\mu_2;\ell_1}_{\textup{(cl)}}(M)$ with $\sym_{\psi}([\phi A_1 \psi_1][\phi_1 A_2 \psi]) = \phi\psi \sym_{\psi}(A_1)\sym_{\psi}(A_2)$ in the classical case, while $[\phi A_1 (1-\phi_1)]A_2\psi \in \Psi^{-\infty}_G(M)$ with compactly supported Schwartz kernel.
\item Pick $R \gg 1$ sufficiently large such that both $A_1$ and $A_2$, when restricted to $x > R$, arise as restrictions of operators in the calculus on $\R_+\times Z$ discussed in Section~\ref{PseudoCalculusExit}. Let then $\phi,\psi \in C_c^{\infty}(M)$ be arbitrary such that both $\textup{supp}(1-\phi)$ and $\textup{supp}(1-\psi)$ are contained in $x > R$. Let furthermore $\phi_1,\psi_1 \in C_c^{\infty}(M)$ with $(1-\phi),(1-\psi) \prec 1 - \phi_1 \prec 1 - \psi_1$, and both $\textup{supp}(1-\phi_1)$ and $\textup{supp}(1-\psi_1)$ still contained in $x > R$. Then
\begin{align*}
(1-\phi)A_1A_2(1-\psi) &= [(1-\phi)A_1(1-\psi_1)][(1-\phi_1)A_2(1-\psi)] \\
&\qquad + (1-\phi)A_1 [\phi_1A_2(1-\psi)].
\end{align*}
We have $[(1-\phi)A_1(1-\psi_1)][(1-\phi_1)A_2(1-\psi)] \in \Psi^{\mu_1+\mu_2;\vec{\ell}}_{\textup{(cl)}}(\R_+\times Z)$ by Theorem~\ref{CompositionCalculus}, and in the classical case
$$
\sym_e([(1-\phi)A_1(1-\psi_1)][(1-\phi_1)A_2(1-\psi)]) = (1-\phi)(1-\psi)\sym_e(A_1)\sym_e(A_2).
$$
We also have $(1-\phi)A_1 [\phi_1A_2(1-\psi)] \in \Psi^{-\infty}_G(M)$ with Schwartz kernel supported in $\{x > R\}\times \{x > R\}$, and so this operator is also in $\Psi^{-\infty}_G(\R_+\times Z)$.
\end{itemize}
The theorem is proved.
\end{proof}

\subsection{Ellipticity and parametrices}

As we did with the calculus on $\R_+\times Z$ in Section~\ref{PseudoCalculusExit}, we restrict the discussion of full ellipticity and the existence of parametrices in the calculus on $M$ to classical pseudodifferential operators.

\begin{definition}
\begin{enumerate}
\item The operator $A \in \Psi^{\mu;\vec{\ell}}_{\textup{cl}}(M)$ is fully elliptic if $\sym_{\psi}(A)$ is invertible on $T^*M\setminus 0$, and if, for sufficiently large $R \gg 1$, $\sym_e(A)(z,x,\zeta,\sigma,\tau)$ is invertible for all $(z,\zeta,\sigma,\tau) \in [T^*Z \times \R \times \overline{\R}_+] \setminus 0$ and all $x \geq R$, and the inverse
$$
|\sym^{-1}_e(A)(z,x,\zeta,\sigma,\tau)|
$$
is bounded for $x \geq R$ and $(z,\zeta,\sigma,\tau) \in T^*Z \times \R \times \overline{\R}_+$ with $|\zeta|_{g_Z(z)}^{2\ell_3} + |\sigma|^{2\ell_3} + |\tau|^{2\ell_1} = 1$, $z \in Z$, where $g_Z$ is any Riemannian metric on $Z$.
\item $A \in x^{\gamma}\Psi^{\mu;\vec{\ell}}_{\textup{cl}}(M)$ is fully elliptic if $x^{-\gamma}A \in \Psi^{\mu;\vec{\ell}}_{\textup{cl}}(M)$ is fully elliptic.
\end{enumerate}
\end{definition}

\begin{theorem}\label{Parametrix}
Let $A \in x^{\gamma}\Psi^{\mu;\vec{\ell}}_{\textup{cl}}(M)$ be fully elliptic. Then there exists a parametrix $P \in x^{-\gamma}\Psi^{-\mu;\vec{\ell}}_{\textup{cl}}(M)$ such that $AP-I,\, PA - I \in \Psi^{-\infty}_G(M)$.
\end{theorem}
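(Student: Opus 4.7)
The plan is to reduce to $\gamma = 0$ by conjugation with $x^\gamma$, construct a first approximation $P_0$ by inverting the full principal symbol $\sym(A) = (\sym_\psi(A),\sym_e(A))$, and improve it to a genuine parametrix via a formal Neumann series. Set $B := x^{-\gamma}A \in \Psi^{\mu;\vec{\ell}}_{\textup{cl}}(M)$, which is fully elliptic by definition. Assuming we produce $Q \in \Psi^{-\mu;\vec{\ell}}_{\textup{cl}}(M)$ with $BQ - I,\, QB - I \in \Psi^{-\infty}_G(M)$, the operator $P := Qx^{-\gamma}$ lies in $x^{-\gamma}\Psi^{-\mu;\vec{\ell}}_{\textup{cl}}(M)$, because $x^\gamma Q x^{-\gamma} \in \Psi^{-\mu;\vec{\ell}}_{\textup{cl}}(M)$ by conjugation invariance, and a direct computation gives $PA - I = QB - I$ and $AP - I = x^\gamma(BQ - I)x^{-\gamma}$, both in $\Psi^{-\infty}_G(M)$ since that class is preserved by multiplication by powers of $x$ on either side. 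We therefore assume $\gamma = 0$ in what follows.

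By full ellipticity, the tuple $\sym(A)^{-1} = (\sym_\psi(A)^{-1},\sym_e(A)^{-1})$ is a legitimate element of $\Sigma^{-\mu;\vec{\ell}}$: the uniform boundedness of $\sym_e(A)^{-1}$ on the anisotropic hemisphere for $x \geq R \gg 1$ is precisely what is required for it to define an extended principal symbol of order $-\mu$, and the compatibility relation \eqref{compatsymbol} is preserved under inversion. Using the split-exactness of the symbol sequence stated in the excerpt—patching a Kohn--Nirenberg quantization of $\sym_\psi(A)^{-1}$ on the compact part with a Mellin quantization on the noncompact end of a holomorphic operator-valued symbol obtained by applying the kernel cut-off $H(\phi)$ of \eqref{HphiSpaces} to a parameter-dependent parametrix of $\sym_e(A)$—we obtain $P_0 \in \Psi^{-\mu;\vec{\ell}}_{\textup{cl}}(M)$ with $\sym(P_0) = \sym(A)^{-1}$. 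Theorem~\ref{CompositionM} then gives $R := I - AP_0$ and $R' := I - P_0A$ both in $\Psi^{-1;\vec{\ell}}_{\textup{cl}}(M)$.

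By Theorem~\ref{CompositionM} again, $R^k \in \Psi^{-k;\vec{\ell}}_{\textup{cl}}(M)$ for every $k$. Asymptotic completeness on $M$—obtained by patching Proposition~\ref{AsymptoticComplete} on the end with the standard Borel summation in $L^{\mu;\ell_1}_{\textup{cl}}(M)$ on the compact part via a partition of unity—produces $S \in \Psi^{0;\vec{\ell}}_{\textup{cl}}(M)$ with $S \sim \sum_{k=0}^\infty R^k$. Setting $P_R := P_0 S$ and using the telescoping identity $(I - R)\sum_{k=0}^{N-1}R^k = I - R^N$ together with $AP_0 = I - R$ yields $AP_R - I \in \Psi^{-N;\vec{\ell}}_{\textup{cl}}(M)$ for every $N$. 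The crucial step is then the identification $\bigcap_{N \in \N}\Psi^{-N;\vec{\ell}}_{\textup{cl}}(M) = \Psi^{-\infty}_G(M)$: compactly supported localizations reduce to $L^{-\infty}$-kernels with compact support, while on the end an operator in the intersection can be written, after asymptotic summation within the holomorphic category, as $\opm(a(x,\sigma,x)) + G$ with $a \in C^{\infty}_B(\R_+,M^{-\infty}_O(Z;\overline{\R}_+))$, and the lemma immediately preceding Proposition~\ref{AsymptoticComplete} shows that such $\opm(a(x,\sigma,x))$ belongs to $\Psi^{-\infty}_G(\R_+\times Z)$, hence to $\Psi^{-\infty}_G(M)$. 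Thus $AP_R - I \in \Psi^{-\infty}_G(M)$. A left parametrix $P_L$ is constructed analogously from $R'$, and the standard computation $P_L = (P_LA)P_R + P_L(I - AP_R) = P_R$ modulo $\Psi^{-\infty}_G(M)$ exhibits $P := P_R$ as a two-sided parametrix.

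The main obstacle is the identification of $\bigcap_N \Psi^{-N;\vec{\ell}}_{\textup{cl}}(M)$ with $\Psi^{-\infty}_G(M)$ at the noncompact end, which rests crucially on the holomorphic Mellin structure of the symbols in the calculus; once this and asymptotic completeness on $M$ are in hand, the rest is the standard parametrix construction.
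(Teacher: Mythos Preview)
Your proof is correct, but it takes a different route from the paper. The paper's argument is very short: it invokes Theorem~\ref{ParametrixModel} to obtain a parametrix on the model space $\R_+\times Z$ (where the Neumann series has already been carried out), combines it with a parametrix in the ordinary step-$\frac{1}{\ell_1}$ calculus over the compact part, and patches the two via a partition of unity using the algebra properties of Theorem~\ref{CompositionM}. In other words, the paper localizes first and runs the Neumann series separately in each piece, whereas you quantize a global first approximation and run the Neumann series once on $M$.

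Your approach costs you two extra global facts that the paper does not need to state: asymptotic completeness of $\Psi^{\ast;\vec{\ell}}(M)$, and the identification $\bigcap_N \Psi^{-N;\vec{\ell}}(M) = \Psi^{-\infty}_G(M)$. Both are true, but your justification of the second is slightly off. The phrase ``after asymptotic summation within the holomorphic category'' suggests you want to produce a single symbol $a$ of order $-\infty$; that is not how the argument should go. The clean way is: for each $N$ write the localized operator on the end as $\opm(a_N(x,\sigma,x)) + G_N$ with $a_N$ of order $-N$, apply Lemma~\ref{ImprovedMappingProps} to $\opm(a_N)$ and its adjoint to get mapping $x^{\alpha}H^{s}_b \to x^{\alpha-j}H^{s'}_b$ for all admissible $j \leq N/\ell_3$, and let $N \to \infty$ to obtain the defining mapping properties of $\Psi^{-\infty}_G(\R_+\times Z)$; Lemma~\ref{ResGreen}(b) then upgrades this to full weight flexibility on $M$. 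Once that is in place your argument goes through, and it has the merit of showing explicitly that the residual class is exactly the intersection of all orders---something the paper uses implicitly but never isolates.
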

\begin{proof}
We have existence of parametrices in the local calculus on ${\mathbb R}_+\times Z$, see Theorem~\ref{ParametrixModel}, and in the standard calculus for step-$\frac{1}{\ell_1}$ polyhomogeneous pseudodifferential operators, and in view of the algebra properties of the calculus on $M$ the standard patching argument is applicable to obtain a parametrix $P$ as asserted.
\end{proof}

\subsection{Sobolev spaces, elliptic regularity, and Fredholm operators}

\begin{proposition}\label{CompactBdd}
For $\mu \leq 0$ every $T \in \Psi^{\mu;\vec{\ell}}(M)$ is a continuous operator
$$
T : L^2_b(M) \to L^2_b(M).
$$
The operator $T$ is compact for $\mu < 0$.
\end{proposition}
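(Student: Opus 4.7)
The strategy is to reduce the operator to the two model calculi already analyzed via a partition of unity and then obtain compactness by approximation. Write
\[
T = \phi T \psi + (1-\phi)T(1-\tilde\psi) + G
\]
with $\tilde\psi \prec \phi \prec \psi$ in $C_c^{\infty}(M)$, where $G \in \Psi^{-\infty}_G(M)$ and the supports of $1-\phi$ and $1-\tilde\psi$ lie in $\{x > R\}$ for some $R \gg 1$. The residual term $G$ has Schwartz kernel on $M \times M$, so it is Hilbert--Schmidt and in particular compact on $L^2_b(M)$.

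The compactly supported piece $\phi T \psi$ belongs to $L^{\mu;\ell_1}(M) = L^{\mu/\ell_1}(M)$ by the defining property of the calculus on $M$. Classical $L^2$-boundedness for pseudodifferential operators of nonpositive order gives $L^2_b$-boundedness (the symbol has compact support, so local and global $L^2$ spaces agree). For $\mu < 0$ the image of this operator on $L^2_b$ lies in $H^{-\mu/\ell_1}(M)$ with support in $\textup{supp}(\phi)$, which embeds compactly into $L^2_b(M)$ by Rellich--Kondrachov.

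The end piece equals $\opm(a(x,\sigma,x)) + G'$ with $a \in C_B^{\infty}(\R_+,M^{\mu;\vec{\ell}}_O(Z;\overline{\R}_+))$ and $G' \in \Psi^{-\infty}_G(\R_+ \times Z)$; again $G'$ is compact. Since $\mu \leq 0$ one has $\mu_+ = 0$, and observation (2) in the definition of $\Psi^{\mu;\vec{\ell}}(\R_+\times Z)$ gives $a(x,\sigma,x) \in C_B^{\infty}(\R_+,M^{\mu;(\ell_1,\ell_1)}_O(Z))$. The anisotropic Mellin calculus then yields
\[
\opm(a(\cdot,\cdot,x)) : L^2_b(\R_+\times Z) \to H^{-\mu;(\ell_1,\ell_1)}_b(\R_+\times Z) \hookrightarrow L^2_b(\R_+\times Z),
\]
which finishes the boundedness assertion.

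For compactness when $\mu < 0$, the plan is to approximate $a$ in $L^2_b$-operator norm by smoothing symbols. An $x$-parametrized version of Lemma~\ref{AnalyticSmoothingApprox} yields $a_j \in C_B^{\infty}(\R_+,M^{-\infty}_O(Z;\overline{\R}_+))$ with $a_j \to a$ in $C_B^{\infty}(\R_+,M^{\mu';\vec{\ell}}_O(Z;\overline{\R}_+))$ for some fixed $\mu' \in (\mu,0)$. Each $\opm(a_j(x,\sigma,x))$ then lies in $\Psi^{-\infty}_G(\R_+\times Z)$ by the unnumbered lemma immediately preceding Proposition~\ref{AsymptoticComplete}, and is hence compact. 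The main obstacle is to bound
\[
\bigl\|\opm((a-a_j)(x,\sigma,x))\bigr\|_{L^2_b \to L^2_b}
\]
by a continuous seminorm of $a - a_j$ in $C_B^{\infty}(\R_+,M^{\mu';(\ell_1,\ell_1)}_O(Z))$, i.e., a Calder\'on--Vaillancourt-type estimate for the anisotropic Mellin calculus of nonpositive order, combined with continuity of the restriction $a(x,\sigma,\tau)\mapsto a(x,\sigma,x)$ for $\mu' \leq 0$ (observation (2) again). Granted this, $\opm(a(x,\sigma,x))$ is a norm limit of compact operators, and compactness of $T$ follows.
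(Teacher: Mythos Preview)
Your boundedness argument is correct and essentially matches the paper's (which simply cites the mapping properties \eqref{AMap} established in the proof of Theorem~\ref{CompositionM}).

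For compactness, however, there is a genuine gap beyond the Calder\'on--Vaillancourt estimate you already flag. You assert that $\opm(a_j(x,\sigma,x)) \in \Psi^{-\infty}_G(\R_+\times Z)$ ``is hence compact,'' and similarly that $G' \in \Psi^{-\infty}_G(\R_+\times Z)$ is compact. But the residual class on $\R_+\times Z$ does \emph{not} furnish compactness on $L^2_b(\R_+\times Z)$: its defining mapping properties only give $G : L^2_b \to x^{\alpha'}H^{s'}_b$ for $\alpha' \leq 0$, which controls decay as $x\to\infty$ but says nothing about $x\to 0$. Concretely, the dilates $u_n(x,z)=f(2^n x,z)$ of a fixed bump $f$ are bounded in $H^1_b \cap x^{-1}H^1_b$ yet have no $L^2_b$-convergent subsequence, so $H^1_b \cap x^{-1}H^1_b \hookrightarrow L^2_b$ is not compact. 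Lemma~\ref{ResGreen}(b) shows that only after multiplying by a cutoff $(1-\omega)$ vanishing near $x=0$ does one obtain a Schwartz kernel. To repair your argument you would need to keep track of the cutoffs $(1-\phi),(1-\tilde\psi)$ throughout the approximation and show the approximants, \emph{after} cutoff, converge in norm---which brings back the quantitative estimate you left open.

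The paper avoids both difficulties with a short trick: it suffices to show $(T^*T)^N$ is compact for one large $N$, since then $|T|=[(T^*T)^N]^{1/(2N)}$ is compact by the spectral theorem and $T=U|T|$ is compact by polar decomposition. Now $(T^*T)^N \in \Psi^{2\mu N;\vec{\ell}}(M)$ and $x(T^*T)^N \in \Psi^{2\mu N+\ell_3;\vec{\ell}}(M)$; for $N > -\ell_3/(2\mu)$ this has negative order, so $x(T^*T)^N : L^2_b(M) \to H^s_b(M)$ for some $s>0$, whence $(T^*T)^N : L^2_b(M) \to x^{-1}H^s_b(M)$. On $M$ (unlike on $\R_+\times Z$) the function $x$ is bounded below, so $x^{-1}H^s_b(M) \hookrightarrow L^2_b(M)$ compactly, and the result follows. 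This bypasses any need for norm-approximation or a Calder\'on--Vaillancourt statement.
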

\begin{proof}
We have already addressed boundedness of operators in $b$-Sobolev spaces in the proof of Theorem~\ref{CompositionM}. Because $\mu \leq 0$ we have
$$
T : x^{\alpha}H^{\frac{s}{\ell_1}}_b(M) \to x^{\alpha}H^{\frac{s-\mu}{\ell_1}}_b(M)
$$
for all $s,\alpha \in \R$. In particular, $T : L^2_b(M) \to L^2_b(M)$ is continuous.

To show the compactness of $T$ for $\mu < 0$ it suffices to prove that $(T^*T)^N : L^2_b(M) \to L^2_b(M)$ is compact for some $N \in \N$. By the spectral theorem for compact selfadjoint operators we then obtain that $|T| = [(T^*T)^N]^{\frac{1}{2N}} : L^2_b(M) \to L^2_b(M)$ is compact, and in view of the polar decomposition $T = U|T| : L^2_b(M) \to L^2_b(M)$ we see that $T$ is compact as well.

By Theorem~\ref{AdjointsM} and Theorem~\ref{CompositionM} we have $(T^*T)^N \in \Psi^{2\mu N;\vec{\ell}}(M)$ for every $N \in \N$, and so
$x(T^*T)^N \in \Psi^{2\mu N + \ell_3;\vec{\ell}}(M)$. For $N > -\frac{\ell_3}{2\mu}$ we have $s = -\frac{2\mu N + \ell_3}{\ell_1} > 0$, and we get a continuous operator $x(T^*T)^N : L^2_b(M) \to H^s_b(M)$. Consequently,
$$
(T^*T)^N : L^2_b(M) \to x^{-1}H^s_b(M)
$$
is continuous, and because the embedding $x^{-1}H^s_b(M) \hookrightarrow L^2_b(M)$ is compact, the operator $(T^*T)^N : L^2_b(M) \to L^2_b(M)$ is compact.
\end{proof}

\begin{proposition}\label{ReductionOrder}
For every $s \in \R$ there exists an operator $R \in \Psi^{s;\vec{\ell}}(M)$ that is invertible with inverse $R^{-1} \in \Psi^{-s;\vec{\ell}}(M)$.
\end{proposition}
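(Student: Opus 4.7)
The plan is to adapt the parameter trick used for the $Z$-calculus order-reductions in Section~\ref{sec-MellinCalcReview}: enlarge the calculus on $M$ by an additional parameter $\lambda\in\R$, build a parameter-dependent fully elliptic family of order $s$, and fix $\lambda$ so large that its parametrix is a genuine two-sided inverse. First I would construct a parameter-dependent version $\Psi^{\mu;\vec{\ell}}_{\textup{cl}}(M;\R_{\lambda})$ of the calculus by treating $\lambda$ as an additional covariable of anisotropy weight $\ell_1$. On the noncompact end this means holomorphic operator-valued symbols $a(x,\sigma,\tau,\lambda)\in C^{\infty}_B(\R_+,M^{\mu;(\ell_1,\ell_1,\ell_3,\ell_1)}_{O,\textup{cl}}(Z;\overline{\R}_+\times\R))$; on the compact part, parameter-dependent classical step-$\frac{1}{\ell_1}$ polyhomogeneous operators. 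The residual class $\Psi^{-\infty}_G(M;\R)$ consists of Schwartz families in $\lambda$ with values in $\Psi^{-\infty}_G(M)$, and all the analytic machinery (composition, adjoints, parametrices; Theorems~\ref{AdjointsM}, \ref{CompositionM}, \ref{Parametrix}) carries over in this parameter-dependent setting with identical proofs.

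Next I would quantize a standard reduction-of-order symbol, e.g., patch Kohn--Nirenberg quantizations of $(1+|\eta|^{2}+\lambda^2)^{s/(2\ell_1)}$ over the compact part with a Mellin quantization of $\langle(\zeta,\sigma,\tau,\lambda)\rangle^{s}_{(\ell_1,\ell_1,\ell_3,\ell_1)}$ on the noncompact end (with kernel cut-off $H(\phi)$ to secure holomorphy in $\sigma$). This produces $R(\lambda)\in\Psi^{s;\vec{\ell}}_{\textup{cl}}(M;\R)$ that is parameter-dependent fully elliptic; the parametrix theorem yields $P(\lambda)\in\Psi^{-s;\vec{\ell}}_{\textup{cl}}(M;\R)$ with remainders $K_r(\lambda)=R(\lambda)P(\lambda)-I$ and $K_\ell(\lambda)=P(\lambda)R(\lambda)-I$ in $\Psi^{-\infty}_G(M;\R)$. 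Because these remainders are Schwartz in $\lambda$ with values in $L^2_b(M)$-bounded operators (Proposition~\ref{CompactBdd}), their operator norms vanish as $|\lambda|\to\infty$. Pick $\lambda_0$ with $\|K_r(\lambda_0)\|,\|K_\ell(\lambda_0)\|<1$ on $L^2_b(M)$; then $R(\lambda_0)$ is bijective on $L^2_b(M)$ with right inverse $P(\lambda_0)(I+K_r(\lambda_0))^{-1}$ and an analogous left inverse, which must agree.

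The crux is then the closure lemma: for $K\in\Psi^{-\infty}_G(M)$ with $I+K$ invertible on $L^2_b(M)$, the inverse has the form $I+K'$ with $K'\in\Psi^{-\infty}_G(M)$. Writing $K'=-K(I+K)^{-1}=-(I+K)^{-1}K$, the full range of mapping properties of $K$ and $K^*$ combined with $L^2_b$-boundedness of $(I+K)^{\pm1}$ transfers to $K'$; the Schwartz-kernel conclusion on $M\times M$ follows from an elliptic-regularity argument, namely that $(I+K)v\in\S(M)$ forces $v=(I+K)v-Kv\in\S(M)$ since $K:\S'(M)\to\S(M)$ continuously, and similarly for the formal adjoint. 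Taking $R:=R(\lambda_0)\in\Psi^{s;\vec{\ell}}_{\textup{cl}}(M)$ and $R^{-1}:=P(\lambda_0)(I+K_r(\lambda_0))^{-1}\in\Psi^{-s;\vec{\ell}}_{\textup{cl}}(M)$ then completes the construction. The main obstacle is precisely this closure lemma; everything else is a parameter-dependent reprise of facts already in hand.
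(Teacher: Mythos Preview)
Your proposal is correct and follows essentially the same route as the paper: build a parameter-dependent version of the calculus with an extra covariable $\lambda$ of weight $\ell_1$, quantize a positive elliptic symbol of order $s$ to obtain a fully elliptic family $R(\lambda)$, produce a parametrix with residuals in $\Psi^{-\infty}_G(M;\R)$, and freeze $\lambda=\lambda_0$ with $|\lambda_0|$ large so that the remainders become invertible perturbations of the identity. The only cosmetic difference is in the final inversion step: the paper argues directly from the mapping-property characterization of $\Psi^{-\infty}_G(M;\R)$ that $(I+G(\lambda))^{-1}=I+G'(\lambda)$ with $G'(\lambda)\in\Psi^{-\infty}_G(M;\R)$ for $|\lambda|\gg 0$, whereas you isolate this as a closure lemma at a fixed $\lambda_0$ via the bootstrap $v=(I+K)v-Kv$; both amount to the same observation.
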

\begin{proof}
Pick an asymptotically conic metric $g$ on $M$ that is of the form \eqref{ModelMetric} for large $x \gg 1$. On $[T^*M \times \R \times \overline{\R}_+] \setminus 0$ we consider the function
$$
p(y,\eta,\lambda,\tau) = \bigl([g(x\eta,x\eta)]^{\ell_3} + \lambda^{2\ell_3} + \tau^{2\ell_1}\bigr)^{\frac{s}{2\ell_1\ell_3}}.
$$
Then $p$ is invertible and satisfies
$$
p(y,\varrho^{\ell_1}\eta,\varrho^{\ell_1}\lambda,\varrho^{\ell_3}\tau) = \varrho^sp(y,\eta,\lambda,\tau), \quad \varrho > 0.
$$
Moreover, for large $x \gg 1$, we have $(y,\eta) \equiv (z,x,\zeta,\xi)$ and $g(x\eta,x\eta) = (x\xi)^2 + g_Z(\zeta,\zeta)$.

The symbol $p(y,\eta,\lambda,\tau)$ is a parameter-dependent elliptic principal symbol associated with the calculus of pseudodifferential operator families on $M$ that depend on the parameters $(\lambda,\tau)$. Quantizing yields an operator family $P(\lambda,\tau) \in L^{s;\vec{\ell}}_{\textup{cl}}(M;\R\times\overline{\R}_+)$ in that calculus with parameter-dependent principal symbol $p$. We now define
$$
[Q_0(\lambda)u](y) = [P(\lambda,x(y))u](y), \quad u \in C_c^{\infty}(M).
$$
Then $Q_0(\lambda)$ is a pseudodifferential operator family in the calculus $L^{s;\ell_1}_{\textup{cl}}(M;\R)$ of operators that depend on the parameter $\lambda \in \R$. The parameter-dependent principal symbol is $q(y,\eta,\lambda) = p(y,\eta,\lambda,0)$, which shows that $Q_0(\lambda)$ is parameter-dependent elliptic.

For large $x \gg 1$ we have
\begin{align*}
p(y,\eta,\lambda,\tau) &\equiv p(z,x,\zeta,\xi,\lambda,\tau) \\
&= \bigl([(x\xi)^2 + g_Z(\zeta,\zeta)]^{\ell_3} + \lambda^{2\ell_3} + \tau^{2\ell_1}\bigr)^{\frac{s}{2\ell_1\ell_3}} \\
&= \bigl([\sigma^2 + g_Z(\zeta,\zeta)]^{\ell_3} + \lambda^{2\ell_3} + \tau^{2\ell_1}\bigr)^{\frac{s}{2\ell_1\ell_3}}\Big|_{\sigma=x\xi}.
\end{align*}
The latter is a parameter-dependent elliptic extended principal symbol, and there exists an operator family $Q_1(\lambda) \in \Psi_{\textup{cl}}^{s;\vec{\ell}}(\R_+\times Z;\R)$ in the calculus on $\R_+\times Z$ that depends on the parameter $\lambda \in \R$ with
$$
\sym_e(Q_1) = \bigl([\sigma^2 + g_Z(\zeta,\zeta)]^{\ell_3} + \lambda^{2\ell_3} + \tau^{2\ell_1}\bigr)^{\frac{s}{2\ell_1\ell_3}}.
$$
The calculus on $\R_+\times Z$ with parameter $\lambda \in \R$ is built from symbols with additional symbolic dependence on $\lambda$, and an inspection of the arguments from Section~\ref{PseudoCalculusExit} shows that this calculus is well-defined. Let $\phi,\psi,\tilde{\psi} \in C_c^{\infty}(M)$ with $\tilde{\psi} \prec \phi \prec \psi$ and $\tilde{\psi} \equiv 1$ on $x \leq L$, where $L \gg 1$ is sufficiently large. Let then
$$
Q(\lambda) = \phi Q_0(\lambda)\psi + (1-\phi)Q_1(\lambda)(1-\tilde{\psi}) \in \Psi^{s;\vec{\ell}}_{\textup{cl}}(M;\R),
$$
where $\Psi^{s;\vec{\ell}}_{\textup{cl}}(M;\R)$ is the global calculus on $M$ that depends on an additional parameter $\lambda \in \R$. By construction $Q(\lambda)$ is fully elliptic with parameter, and thus there exists $T(\lambda) \in \Psi^{-s;\vec{\ell}}_{\textup{cl}}(M;\R)$ such that
$$
Q(\lambda)T(\lambda) - I,\,T(\lambda)Q(\lambda) - I \in \Psi^{-\infty}_G(M;\R).
$$
The parameter-dependent residual class $\Psi^{-\infty}_G(M;\R)$ consists of operator families $G(\lambda)$ such that
$$
G(\lambda) \in \S(\R,\L(x^{\alpha}H^s_b(M),x^{\alpha'}H^{s'}_b(M)))
$$
for all $s,s',\alpha,\alpha' \in \R$. This characterization via mapping properties shows that for any $G(\lambda) \in \Psi^{-\infty}_G(M;\R)$ there exists $G'(\lambda) \in \Psi^{-\infty}_G(M;\R)$ such that
$$
(I + G(\lambda))^{-1} = I + G'(\lambda)
$$
for large $|\lambda| \gg 0$. Consequently, for $|\lambda_0| \gg 0$ large enough, the operator
$$
R:= Q(\lambda_0) \in \Psi^{s;\vec{\ell}}_{\textup{cl}}(M)
$$
is invertible with inverse $R^{-1} = T(\lambda_0)(I + G'(\lambda_0)) \in \Psi^{-s;\vec{\ell}}_{\textup{cl}}(M)$ for some $G'(\lambda) \in \Psi^{-\infty}_G(M;\R)$.
\end{proof}

\begin{definition}
Pick any invertible operator $R \in \Psi^{s;\vec{\ell}}(M)$ with $R^{-1} \in \Psi^{-s;\vec{\ell}}(M)$, and define
$$
{\mathscr H}^{s;\vec{\ell}}(M) = \{u \in \S'(M);\; Ru \in L^2_b(M)\}
$$
with inner product $\langle u,v \rangle_{{\mathscr H}^{s;\vec{\ell}}}:= \langle Ru,Rv \rangle_{L^2_b}$. This is a Hilbert space, and while the inner product depends on $R$, different choices of operators $R$ yield the same Sobolev space with equivalent norms. Moreover, $\S(M) \subset {\mathscr H}^{s;\vec{\ell}}(M)$ is dense for every $s \in \R$.

We also consider the weighted spaces $x^{\alpha}{\mathscr H}^{s;\vec{\ell}}(M)$ for $\alpha,s \in \R$.
\end{definition}

\begin{theorem}\label{SobolevSpaceProperties}
\begin{enumerate}[(a)]
\item Every $A \in x^{\gamma}\Psi^{\mu;\vec{\ell}}(M)$ is continuous in
$$
A : x^{\alpha}{\mathscr H}^{s;\vec{\ell}}(M) \to x^{\alpha+\gamma}{\mathscr H}^{s-\mu;\vec{\ell}}(M) 
$$
for all $s,\alpha \in \R$.
\item The embedding $x^{\alpha}{\mathscr H}^{s;\vec{\ell}}(M) \hookrightarrow x^{\alpha'}{\mathscr H}^{s';\vec{\ell}}(M)$ is continuous for $s \geq s'$ and $\alpha \leq \alpha'$. Furthermore, the embedding is compact for $s > s'$ and $\alpha \leq \alpha'$.
\item The $L^2_b$-inner product extends to a continuous and perfect sesquilinear pairing
$$
\langle \cdot,\cdot \rangle : x^{\alpha}{\mathscr H}^{s;\vec{\ell}}(M) \times x^{-\alpha}{\mathscr H}^{-s;\vec{\ell}}(M) \to \C
$$
for all $s,\alpha \in \R$.
\item We have
$$
H^{\frac{s}{\ell_1}}_{\textup{comp}}(M) \subset {\mathscr H}^{s;\vec{\ell}}(M)  \subset H^{\frac{s}{\ell_1}}_{\textup{loc}}(M) 
$$
for all $s \in \R$, and
$$
\S(M) = \bigcap\limits_{s \in \R}{\mathscr H}^{s;\vec{\ell}}(M) \textup{ and } \S'(M) = \bigcup\limits_{s \in \R}{\mathscr H}^{s;\vec{\ell}}(M).
$$
\end{enumerate}
\end{theorem}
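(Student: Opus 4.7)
My plan treats the four statements in sequence, drawing throughout on the order reductions $R_s\in\Psi^{s;\vec{\ell}}(M)$ (invertible with $R_s^{-1}\in\Psi^{-s;\vec{\ell}}(M)$) from Proposition~\ref{ReductionOrder}, on the $L^2_b$-boundedness and compactness of $\Psi^{\mu;\vec{\ell}}(M)$ for $\mu\le 0$ respectively $\mu<0$ from Proposition~\ref{CompactBdd}, and on the conjugation property $x^{-\beta}\Psi^{\mu;\vec{\ell}}(M)x^{\beta}\subset\Psi^{\mu;\vec{\ell}}(M)$ listed among the basic properties of the calculus.

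For (a), given $u\in x^{\alpha}\mathscr{H}^{s;\vec{\ell}}(M)$ set $w=R_sx^{-\alpha}u\in L^2_b$ so that $\|w\|_{L^2_b}=\|u\|_{x^{\alpha}\mathscr{H}^{s;\vec{\ell}}}$; the desired estimate reduces to $L^2_b$-boundedness of
\[
T=R_{s-\mu}\bigl(x^{-\alpha-\gamma}Ax^{\alpha}\bigr)R_s^{-1}.
\]
Writing $A=x^{\gamma}A_0$ with $A_0\in\Psi^{\mu;\vec{\ell}}(M)$, the conjugation property places $x^{-\alpha-\gamma}Ax^{\alpha}=x^{-\alpha}A_0x^{\alpha}$ in $\Psi^{\mu;\vec{\ell}}(M)$, so Theorem~\ref{CompositionM} gives $T\in\Psi^{0;\vec{\ell}}(M)$, to which Proposition~\ref{CompactBdd} applies. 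The continuity in (b) uses the same reduction with the identity in place of $A$: the operator $R_{s'}x^{\alpha-\alpha'}R_s^{-1}$ is decomposed as $x^{\alpha-\alpha'}\cdot(x^{\alpha'-\alpha}R_{s'}x^{\alpha-\alpha'})R_s^{-1}$, the parenthesised factor lies in $\Psi^{s';\vec{\ell}}(M)$ by conjugation, the product after $R_s^{-1}$ lies in $\Psi^{s'-s;\vec{\ell}}(M)\subset\Psi^{0;\vec{\ell}}(M)$ because $s\ge s'$, and multiplication by $x^{\alpha-\alpha'}$ is bounded since $\alpha-\alpha'\le 0$ and $x$ is positive and bounded below on $M$. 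For the compactness assertion with $s>s'$ the same decomposition exhibits $T$ as bounded multiplication composed with an element of $\Psi^{s'-s;\vec{\ell}}(M)$ of strictly negative order, which is compact on $L^2_b$ by Proposition~\ref{CompactBdd}.

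For (c), fix an invertible $R_s\in\Psi^{s;\vec{\ell}}(M)$ and observe that $(R_s^*)^{-1}=(R_s^{-1})^*\in\Psi^{-s;\vec{\ell}}(M)$ is another valid order reduction by Theorem~\ref{AdjointsM}; hence $\|v\|_{x^{-\alpha}\mathscr{H}^{-s;\vec{\ell}}}$ is equivalent to $\|(R_s^*)^{-1}x^{\alpha}v\|_{L^2_b}$. For $u,v\in\S(M)$, repeated adjunction yields
\[
\langle u,v\rangle_{L^2_b}=\bigl\langle R_sx^{-\alpha}u,(R_s^*)^{-1}x^{\alpha}v\bigr\rangle_{L^2_b},
\]
and Cauchy--Schwarz delivers continuity, extended by density of $\S(M)$. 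Perfectness comes from Riesz representation in the Hilbert space $x^{\alpha}\mathscr{H}^{s;\vec{\ell}}$: an arbitrary bounded functional equals $u\mapsto\langle R_sx^{-\alpha}u,R_sx^{-\alpha}u_0\rangle_{L^2_b}$ for some $u_0$, which unwinds by adjunction to $\langle u,v\rangle_{L^2_b}$ with $v=x^{-\alpha}R_s^*R_sx^{-\alpha}u_0$, and one verifies $v\in x^{-\alpha}\mathscr{H}^{-s;\vec{\ell}}$ by computing $(R_s^*)^{-1}x^{\alpha}v=R_sx^{-\alpha}u_0\in L^2_b$.

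For (d), the sandwich with $H^{s/\ell_1}$-spaces follows from Lemma~\ref{Compatibility}: $R_s\in L^{s/\ell_1}_{\textup{cl}}(M)$ is (locally) elliptic, so $R_su\in L^2_b\subset L^2_{\textup{loc}}$ forces $u\in H^{s/\ell_1}_{\textup{loc}}(M)$ by elliptic regularity, while for $u\in H^{s/\ell_1}_{\textup{comp}}$ a cutoff decomposition $R_su=\psi R_s\phi u+(1-\psi)R_s\phi u$ with $\phi\equiv 1$ on $\textup{supp}(u)$ and $\psi\equiv 1$ near $\textup{supp}(\phi)$ exhibits $R_su$ as the sum of a compactly supported $L^2$-function and a $\Psi^{-\infty}_G(M)$-image (the cross term being residual by the very definition of $\Psi^{\mu;\vec{\ell}}(M)$), both in $L^2_b$. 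The inclusion $\S(M)\subset\bigcap_s\mathscr{H}^{s;\vec{\ell}}$ is immediate since $R_s:\S(M)\to\S(M)\subset L^2_b$, while the reverse inclusion is the delicate point: working with the explicit order reduction of Proposition~\ref{ReductionOrder}, whose extended principal symbol $(\ldots+\tau^{2\ell_1})^{s/(2\ell_1\ell_3)}$ has leading behaviour $x^{s/\ell_3}$ as $x\to\infty$, one refines the crude mapping \eqref{AMap} to $\mathscr{H}^{s;\vec{\ell}}\hookrightarrow x^{-s/\ell_3+c}H^{s/\ell_1}_b(M)$ for an $s$-independent constant $c$, and the intersection over $s\to+\infty$ collapses to $\S(M)=\bigcap_{\alpha,s\in\N_0}x^{-\alpha}H^s_b(M)$. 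The identity $\S'(M)=\bigcup_s\mathscr{H}^{s;\vec{\ell}}$ then follows by duality through (c), or equivalently by the symmetric argument with $s\to-\infty$. The principal obstacle lies precisely in this sharp weighted mapping: the bound \eqref{AMap} only controls the weight by $s_+/\ell_3$ and is too lossy to reach $\S(M)$, so one must peel off the leading asymptotics of $R_s$ at $\tau=x\to\infty$ from the construction in Proposition~\ref{ReductionOrder}, or use the conjugation identity to split $R_s$ into a weight times an operator whose extended symbol is bounded at infinity.
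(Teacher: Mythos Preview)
Your treatment of (a)--(c) is correct and is exactly the fleshing out of what the paper compresses into one sentence (``follow from Propositions~\ref{CompactBdd} and~\ref{ReductionOrder} and the algebra properties''). The reduction to $L^2_b$-boundedness of $R_{s-\mu}(x^{-\alpha-\gamma}Ax^{\alpha})R_s^{-1}$ via conjugation and composition, the compactness argument in (b) via Proposition~\ref{CompactBdd}, and the duality argument in (c) using $(R_s^*)^{-1}$ as an order reduction for $-s$ are all sound.

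For (d) you correctly identify the only nontrivial point --- the inclusion $\bigcap_s\mathscr{H}^{s;\vec{\ell}}(M)\subset\S(M)$ --- and you are right that the raw estimate \eqref{AMap} applied to $R_s^{-1}$ alone gives only $\mathscr{H}^{s;\vec{\ell}}\subset H^{s/\ell_1}_b$, whose intersection over $s$ is strictly larger than $\S(M)$. However, your proposed remedy (peeling off the leading $\tau$-behaviour of the explicit order reduction, or splitting $R_s$ as weight times bounded-symbol operator) is more elaborate than necessary and is left as a sketch. The paper's resolution is an elementary trick you have overlooked: multiplication by $x^j$ lies in $\Psi^{j\ell_3;\vec{\ell}}(M)$ for every $j\in\N_0$, so the composition $x^jR_s^{-1}\in\Psi^{j\ell_3-s;\vec{\ell}}(M)$ has \emph{nonpositive} order whenever $0\le j\le s/\ell_3$, and then \eqref{AMap} applies with $K=0$ to give
\[
\mathscr{H}^{s;\vec{\ell}}(M)\hookrightarrow x^{-j}H^{\frac{s-j\ell_3}{\ell_1}}_b(M),\qquad 0\le j\le \tfrac{s}{\ell_3},\ j\in\N_0.
\]
Letting $s\to\infty$ and varying $j$ within the constraint, the pairs $(j,\tfrac{s-j\ell_3}{\ell_1})$ exhaust all of $\N_0\times[0,\infty)$, so the intersection lands in $\bigcap_{\alpha\in\N_0,\beta\ge 0}x^{-\alpha}H^{\beta}_b(M)=\S(M)$. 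The companion inclusion $x^{-s/\ell_3}H^{s/\ell_1}_b\hookrightarrow\mathscr{H}^{s;\vec{\ell}}$ (and the analogous pair for $s<0$) follows from \eqref{AMap} applied to $R_s$ itself and simultaneously handles the $H^{s/\ell_1}_{\textup{comp}}$/$H^{s/\ell_1}_{\textup{loc}}$ sandwich and the $\S'(M)$ identity. This is shorter than your elliptic-regularity route and avoids any structural analysis of the particular $R_s$.
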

\begin{proof}
Parts (a)--(c) follow from Propositions~\ref{CompactBdd} and \ref{ReductionOrder} and the algebra properties of the calculus. For (d) note that for $s \geq 0$ we have
$$
x^{-\frac{s}{\ell_3}}H^{\frac{s}{\ell_1}}_b(M) \hookrightarrow {\mathscr H}^{s;\vec{\ell}}(M) \hookrightarrow x^{-j}H^{\frac{s-j\ell_3}{\ell_1}}_b(M)
$$
for $0 \leq j \leq \frac{s}{\ell_3}$, $j \in \N_0$, and for $s < 0$ we have
$$
x^{j}H^{\frac{s+j\ell_3}{\ell_1}}_b(M) \hookrightarrow {\mathscr H}^{s;\vec{\ell}}(M) \hookrightarrow x^{-\frac{s}{\ell_3}}H^{\frac{s}{\ell_1}}_b(M)
$$
for $0 \leq j \leq -\frac{s}{\ell_3}$, $j \in \N_0$.
\end{proof}

\begin{theorem}
Let $A \in x^{\gamma}\Psi_{\textup{cl}}^{\mu;\vec{\ell}}(M)$ be fully elliptic.
\begin{enumerate}[(a)]
\item For all $s,\alpha \in \R$ the operator
$$
A : x^{\alpha}{\mathscr H}^{s;\vec{\ell}}(M) \to x^{\alpha+\gamma}{\mathscr H}^{s-\mu;\vec{\ell}}(M) 
$$
is Fredholm.
\item Let $u \in \S'(M)$ such that $Au \in x^{\alpha}{\mathscr H}^{s;\vec{\ell}}(M)$ for some $\alpha,s \in \R$. Then $u \in x^{\alpha-\gamma}{\mathscr H}^{s+\mu;\vec{\ell}}(M)$.
\item The decompositions
\begin{align*}
x^{\alpha}{\mathscr H}^{s;\vec{\ell}}(M) &= \ker(A) \oplus \ran[A^* : x^{\alpha-\gamma}{\mathscr H}^{s+\mu;\vec{\ell}}(M) \to x^{\alpha}{\mathscr H}^{s;\vec{\ell}}(M)], \\
x^{\alpha+\gamma}{\mathscr H}^{s-\mu;\vec{\ell}}(M) &= \ker(A^*) \oplus \ran[A : x^{\alpha}{\mathscr H}^{s;\vec{\ell}}(M) \to x^{\alpha+\gamma}{\mathscr H}^{s-\mu;\vec{\ell}}(M)]
\end{align*}
are topologically direct sums, and $\ker(A),\,\ker(A^*) \subset \S(M)$ are independent of $s,\alpha \in \R$.
\item Let $\pi = \pi^2 =\pi^* \in \Psi^{-\infty}_G(M)$ be the $L^2_b$-orthogonal projection onto $\ker(A)$, and $\tilde{\pi} = \tilde{\pi}^2 = \tilde{\pi}^* \in \Psi^{-\infty}_G(M)$ be the $L^2_b$-orthogonal projection onto $\ker(A^*)$. The Moore-Penrose inverse (pseudoinverse) of $A$
$$
P = (\pi + A^*A)^{-1}A^* = A^*(\tilde{\pi} + AA^*)^{-1} \in x^{-\gamma}\Psi_{\textup{cl}}^{-\mu;\vec{\ell}}(M),
$$
and $PA = I - \pi$ and $AP = I - \tilde{\pi}$. The projection operators $AP,\, PA \in \Psi_{\textup{cl}}^{0;\vec{\ell}}(M)$ are (formally) selfadjoint, and their continuous extensions to the weighted Sobo\-lev spaces are the projections onto the ranges of $A$ and $A^*$ in the direct decompositions of the spaces given above.
\end{enumerate}
\end{theorem}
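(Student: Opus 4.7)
The plan is to derive all four parts from Theorem~\ref{Parametrix}, the algebra properties (Theorems~\ref{AdjointsM}, \ref{CompositionM}), and the $L^2_b$-perfect pairing of Theorem~\ref{SobolevSpaceProperties}. Let $P_0 \in x^{-\gamma}\Psi^{-\mu;\vec{\ell}}_{\textup{cl}}(M)$ denote a parametrix of $A$ with $P_0A - I,\, AP_0 - I \in \Psi^{-\infty}_G(M)$.

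For part (a), residual operators map $x^{\alpha}{\mathscr H}^{s;\vec{\ell}}(M)$ into any $x^{\alpha'}{\mathscr H}^{s';\vec{\ell}}(M)$, so by the compact embeddings from Theorem~\ref{SobolevSpaceProperties}(b), $P_0A - I$ and $AP_0 - I$ are compact and $A$ is Fredholm. For part (b), I would apply the identity $u = P_0(Au) - (P_0A - I)u$ to any $u \in {\mathscr H}^{s_0;\vec{\ell}}(M) \subset \S'(M)$: the first term lies in $x^{\alpha-\gamma}{\mathscr H}^{s+\mu;\vec{\ell}}(M)$ by the mapping properties of $P_0$, and the second lies in $\S(M)$. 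In particular $\ker(A), \ker(A^*) \subset \S(M)$ (the argument for $A^*$ uses that it is again fully elliptic by Theorem~\ref{AdjointsM}), and both are independent of $s, \alpha$. For part (c), the Fredholmness of $A^* : x^{\alpha-\gamma}{\mathscr H}^{s+\mu;\vec{\ell}}(M) \to x^{\alpha}{\mathscr H}^{s;\vec{\ell}}(M)$ from part (a), combined with the Fredholm alternative, identifies $\ran A^*$ with the $L^2_b$-annihilator of $\ker A$. Choosing an $L^2_b$-orthonormal basis $\{\psi_j\} \subset \S(M)$ of $\ker A$, the projection $\pi$ has Schwartz kernel $\sum_j \psi_j \otimes \overline{\psi_j} \in \S(M \times M)$, so $\pi \in \Psi^{-\infty}_G(M)$ and extends continuously to every weighted Sobolev space. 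The decomposition $u = \pi u + (I - \pi)u$ gives the first topological direct sum; the second is symmetric.

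The substance of the theorem lies in part (d). I would set $T = \pi + A^*A \in x^{2\gamma}\Psi^{2\mu;\vec{\ell}}_{\textup{cl}}(M)$. Since $\pi$ is residual and $\sym_e(A^*A) = |\sym_e(A)|^2$ is invertible, $T$ is fully elliptic and hence Fredholm by (a). To see $T$ is injective, if $Tu = 0$, then using $\pi A^*A = (A^*A\pi)^* = 0$ (as $A\pi = 0$) gives $\pi u = \pi T u = 0$; then $A^*Au = 0$, so by elliptic regularity for $A^*A$ we have $u \in \S(M)$ and $\|Au\|^2_{L^2_b} = \langle A^*Au, u\rangle = 0$, yielding $u \in \ker A$; combined with $\pi u = 0$, this forces $u = 0$. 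By the same reasoning $\ker((A^*A)^*) = \ker A$, so $\operatorname{ind}(A^*A) = 0$ and hence $\operatorname{ind}(T) = 0$, making $T$ bijective. To place $T^{-1}$ in the calculus, pick a parametrix $Q \in x^{-2\gamma}\Psi^{-2\mu;\vec{\ell}}_{\textup{cl}}(M)$ with $TQ = I + R$, $R \in \Psi^{-\infty}_G(M)$; then $T^{-1} = Q - T^{-1}R$, and both $T^{-1}R$ and its $L^2_b$-adjoint factor through $\S(M)$ by elliptic regularity for $T$, so $T^{-1}R \in \Psi^{-\infty}_G(M)$ and $T^{-1} \in x^{-2\gamma}\Psi^{-2\mu;\vec{\ell}}_{\textup{cl}}(M)$. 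Composition yields $P = T^{-1}A^* \in x^{-\gamma}\Psi^{-\mu;\vec{\ell}}_{\textup{cl}}(M)$. From $T\pi = \pi$ (since $A^*A\pi = 0$) I get $T^{-1}\pi = \pi$, hence $PA = T^{-1}(T - \pi) = I - \pi$. The identity $(\pi + A^*A)A^* = A^*AA^* = A^*(\tilde{\pi} + AA^*)$, which uses $\pi A^* = 0 = A^*\tilde{\pi}$, yields the second formula $P = A^*(\tilde{\pi} + AA^*)^{-1}$, from which $AP = I - \tilde{\pi}$ follows by a symmetric calculation. Self-adjointness and idempotency of $PA = I - \pi$ and $AP = I - \tilde{\pi}$, as well as their identification with the projections of part (c), are then immediate.

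The main obstacle I anticipate is the step placing $T^{-1}$ in the calculus itself rather than merely in the space of bounded operators on the weighted Sobolev spaces. This requires simultaneously the existence of a fully elliptic parametrix for $T$, the elliptic regularity from (b) applied to $T$ (to force $T^{-1}R$ and its adjoint into $\S(M)$), and the ideal property of $\Psi^{-\infty}_G(M)$ within the calculus — a standard but crucial combination in the parametrix-refinement scheme.
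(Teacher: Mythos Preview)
Your proposal is correct and follows essentially the same route as the paper: parts (a) and (b) via the parametrix from Theorem~\ref{Parametrix} and the Sobolev space properties, then (d) by showing $T=\pi+A^*A$ is fully elliptic, injective, of index zero, and finally placing $T^{-1}$ in the calculus via parametrix refinement. The only cosmetic differences are that the paper proves injectivity of $T$ more directly via the quadratic form $\langle Tu,u\rangle_{L^2_b}=\|\pi u\|^2_{L^2_b}+\|Au\|^2_{L^2_b}$ (after first invoking (b) to get $u\in\S(M)$), obtains surjectivity by closed range plus duality rather than index stability, and uses the two-sided sandwich $T^{-1}=Q-QG_R+G_LT^{-1}G_R$ so that the residual correction $G_LT^{-1}G_R$ is manifestly in $\Psi^{-\infty}_G(M)$ without a separate appeal to elliptic regularity for $T$.
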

\begin{proof}
Parts (a) and (b) follow from Theorem~\ref{Parametrix} and Theorem~\ref{SobolevSpaceProperties}. To prove (d) note that both
$$
(\pi + A^*A),\, (\tilde{\pi} + AA^*) \in x^{2\gamma}\Psi_{\textup{cl}}^{2\mu;\vec{\ell}}(M)
$$
are fully elliptic and formally selfadjoint with
$$
\ker(\pi + A^*A) = \{0\} = \ker(\tilde{\pi} + AA^*).
$$
To see the latter observe that both kernels are contained in $\S(M)$ by elliptic regularity (b), and $(\pi + A^*A) : \S(M) \to \S(M)$ is injective since
$$
\langle (\pi + A^*A)u,u \rangle_{L^2_b} = \|\pi u\|_{L^2_b}^2 + \|Au\|_{L^2_b}^2, \quad u \in \S(M),
$$
and likewise for $(\tilde{\pi} + AA^*)$. The Fredholm property implies that the range of each of
$$
(\pi + A^*A),\, (\tilde{\pi} + AA^*) : x^{\alpha}{\mathscr H}^{s;\vec{\ell}}(M) \to x^{\alpha+2\gamma}{\mathscr H}^{s-2\mu;\vec{\ell}}(M) 
$$
is closed for any $\alpha,s \in \R$, and by duality of the Sobolev space scale with respect to the $L^2_b$-inner product and injectivity of the formal adjoints we then obtain that these operators are surjective, hence invertible.

Let $Q \in x^{-2\gamma}\Psi_{\textup{cl}}^{-2\mu;\vec{\ell}}(M)$ be a parametrix for $(\pi + A^*A)$ such that
$$
(\pi + A^*A)Q = I + G_R, \quad Q(\pi + A^*A) = I + G_L, \quad G_L,\,G_R \in \Psi^{-\infty}_G(M).
$$
Then
$$
(\pi + A^*A)^{-1} = Q - QG_R + G_L(\pi + A^*A)^{-1}G_R.
$$
Now $G_L(\pi + A^*A)^{-1}G_R \in \Psi^{-\infty}_G(M)$ as this operator satisfies the defining mapping properties of the residual class, and so $(\pi + A^*A)^{-1} \in x^{-2\gamma}\Psi_{\textup{cl}}^{-2\mu;\vec{\ell}}(M)$, and likewise $(\tilde{\pi} + AA^*)^{-1} \in x^{-2\gamma}\Psi_{\textup{cl}}^{-2\mu;\vec{\ell}}(M)$. Thus
$$
P = (\pi + A^*A)^{-1}A^* = A^*(\tilde{\pi} + AA^*)^{-1} \in x^{-\gamma}\Psi_{\textup{cl}}^{-\mu;\vec{\ell}}(M)
$$
as asserted. Both (d) and (c) follow.
\end{proof}


\end{document}